\newcommand{\abs}[1]{\left| #1 \right|}
\newcommand{\fatn}[1]{f((T')^nx,\sigma_{n,x}(#1))}
\newtheorem{thm}{Theorem}
\newtheorem*{thm*}{Theorem}
\newtheorem{cor}{Corollary}
\newtheorem{lem}{Lemma}
\newtheorem*{lem*}{Lemma}
\newtheorem{prop}{Proposition}
\theoremstyle{remark}
\newtheorem{remark}{Remark}
\newtheorem{question}{Question}
\theoremstyle{definition}
\newtheorem*{defin*}{Definition}
\newtheorem{defin}{Definition}
\newcommand{\norm}[1]{\left\lVert#1\right\rVert}
\renewcommand{\epsilon}{\varepsilon}
\newcommand{\bbe}{\mathbb{E}}
\title{Generic properties of extensions}
\author{Mike Schnurr}
\address{Max Planck Institute for Mathematics in the Sciences, Inselstr. 22, 04103 Leipzig, Germany}
\email{schnurr@mis.mpg.de}
\begin{document}
	
\begin{abstract}
	Motivated by the classical results by Halmos and Rokhlin on the genericity of weakly but not strongly mixing transformations and the Furstenberg tower construction, we show that weakly but not strongly mixing extensions on a fixed product space with both measures non-atomic are generic. In particular, a generic extension does not have an intermediate nilfactor.
\end{abstract}

\maketitle
	
\section{Introduction} \label{Sec:Introduction}
The classical results by Halmos \cite{HalmosPaper} and Rokhlin \cite{Rokhlin} state that a ``typical'' measure-preserving transformation on a probability space $(X,\mu)$ is weakly but not strongly mixing. More precisely, the set of weakly mixing transformations is a dense, $G_{\delta}$ (hence residual) set for the weak topology and the set of strongly mixing transformations are of first category. The combination of these two results proved the existence of a weakly but not strongly mixing transformation, without providing a concrete example (for such an example, see \cite{Chacon}). Since then, much research has been done in finding ``typical'' properties of measure-preserving dynamical systems. See, e.g., \cite{Nadkarni}, \cite{KatokStepin}, \cite{Ageev1}, \cite{King}, \cite{Ageev2}, \cite{AlpernPrasad}, \cite{Ageev3}, \cite{RueLazaro}, \cite{Ageev4}, \cite{Solecki}, \cite{Guiheneuf}.

More than thirty years later, in completely unrelated efforts, Furstenberg \cite{FurstenbergOriginal} presented his celebrated ergodic theoretic proof of Szemeredi's Theorem on the existence of arbitrarily long arithmetic progressions in large subsets of $\mathbb{N}.$ While the original proof by Furstenberg used diagonal measures, the alternative proof by him, Katznelson, and Ornstein \cite{Furstenberg} building up a tower of so-called compact and weakly mixing extensions had much greater impact on further development of ergodic theory. This method of finding the characteristic factor has been extended to various ergodic theorems and is an active area of research, see, e.g., \cite{Chu}, \cite{ChuFrantzinakisHost}, \cite{AssaniPresser}, \cite{EisnerZorin}, \cite{BergelsonTaoZiegler}, \cite{FrantzinakisZorin}, \cite{EisnerKrause}, \cite{TaoZiegler} \cite{AssaniDuncanMoore}, \cite{Robertson}. For example, the correct characteristic factor for norm convergence of multiple ergodic averages was identified by Host and Kra \cite{HostKra} and has the structure of an inverse limit of nilsystems, see also Ziegler \cite{Ziegler}.

The purpose of this paper is to prove  analogues of the Halmos and Rokhlin category theorems for extensions (see Theorems \ref{Thm:CategoryTheorem} and \ref{Thm:FirstCategoryTheorem}), extending a result of Robertson on compact group extensions \cite{EARobertson}. Inspired by Rokhlin's skew product representation theorem (see, for example, \cite[p.69]{Glasner}), we consider extensions defined on product spaces with the natural projection as the factor map. We show that for a fixed product space, where both measures are non-atomic, a ``typical'' extension is weakly but not strongly mixing. Here, by an extension we mean an invertible extension of some (non-fixed), invertible, measure-preserving transformation on the factor. Note that the set of extensions is a closed, nowhere dense set in all invertible transformations on the product space (see Proposition \ref{Prop:GXIsClosed}), so the classical Halmos and Rokhlin results cannot be applied. The proof for weakly mixing extensions is a non-trivial adaptation of the original construction by Halmos. In particular, a ``typical'' extension does not have an intermediate nilfactor. For examples of systems lacking non-trivial nilfactors, see \cite{HostKraMaass}.

The paper is organized as follows. After discussing some preliminaries in Section \ref{Sec:Preliminaries}, we consider the case of discrete extensions in Section \ref{Sec:Discrete}, in particular showing that there are no weak mixing extensions on these spaces (Proposition \ref{Prop:NoWeaklyMixingFinite}), but that permutations are dense (Theorem \ref{Thm:DensityOfPermutationsFinite}). We then prove the Weak Approximation Theorem for Extensions on the unit square (Theorem \ref{Thm:WATE}) in Section \ref{Sec:WAT} using the density result for discrete extensions mentioned above. In Section \ref{Sec:UniformApproximation} we generalize a few results, including Halmos' Uniform Approximation Theorem, which are necessary to prove our Conjugacy Lemma for Extensions (Lemma \ref{Lem:ConjugacyLemmaExtensions}) in Section \ref{Sec:ConjugacyLemma}. Section \ref{Sec:CategoryTheorem} is devoted to the proof that weakly mixing extensions on the unit square are residual and Section \ref{Sec:General} addresses the case of general vertical measure. In Section \ref{Sec:StrongMixing} we define strongly mixing extensions, and show that such extensions are of first category (Theorem \ref{Thm:FirstCategoryTheorem}). Finally in Section \ref{Sec:Questions} we formulate some open questions. After this paper was made public, the first part of Question \ref{Que:FixedFactor} was answered by Eli Glasner and Benjamin Weiss, see \cite{GlasnerWeiss}.

\textbf{Acknowledgments.} The question of ``typical'' behavior of extensions was asked by Terence Tao for a fixed factor, motivated by \cite{HostKraMaass}, cf. Question \ref{Que:FixedFactor} and note in particular the following discussion on the difficulties with fixed factors. The author is very grateful to him for the inspiration. The author also thanks Tanja Eisner for introducing him to the problem and for many helpful discussions. The author is further thankful to Ben Stanley for being available to exchange ideas, the referee of this paper for careful reading and valuable comments which improved the paper, and to Bryna Kra, Michael Lin, Philipp Kunde, and Yonatan Gutman for helpful remarks. Lastly, the support of the Max Planck Institute is greatly acknowledged.

\section{Preliminaries} \label{Sec:Preliminaries}
As explained in the introduction, in this paper we will be working with extensions on product spaces through the natural projection. To be more precise, we let $(X, m)$ be a non-atomic standard probability space, $(Y, \eta)$ be a probability space, $(Z,\mu) = (X \times Y, m \times \eta),$ and $T,T'$ be measure-preserving transformations on $(Z,\mu), (X,m)$ respectively, such that $(Z,\mu,T)$ is an extension of $(X,m,T')$ through the natural projection map $\pi:Z \to X$ onto the first coordinate. We will assume throughout that $T,T'$ are invertible, and will identify two transformations if they differ on a set of measure zero. We will say ``$T$ is an extension of $T'$'' or ``$T$ extends $T'$'' if $T$ and $T'$ satisfy all conditions stated above. Throughout this paper, we will assume without loss of generality that $X$ is the unit interval and $m$ is the Lebesgue measure. We can assume this because all non-atomic standard probability spaces are isomorphic (see \cite[p. 61]{HalmosPaper}).

Let $\mathcal{G}(Z)$ denote the set of all invertible, measure-preserving transformations on $(Z,\mu)$ and let $\mathcal{G}_X = \{T \in \mathcal{G}(Z): \exists \ T' \in \mathcal{G}(X)  \text{ s.t. } T \text{ extends } T' \}$. Note that if we say $T \in \mathcal{G}_X,$ we assume that the transformation on the factor will be notated by $T'$. Further note that we will also write $\mathcal{G}_X$ to denote the corresponding set of Koopman operators.

The weak topology on $\mathcal{G}(Z)$ is the topology defined by the subbasic neighborhoods

$$N_{\epsilon}(T;E) = \{S \in \mathcal{G}(Z): \mu(TE \triangle SE) < \epsilon \},$$

where $\epsilon >0$ and $E$ is some measurable subset of $Z$. Note that if $Z$ is, say, the unit square with the Lebesgue measure, then it is sufficient for a subbasis to consider only dyadic sets (i.e., a finite union of dyadic squares). See \cite{HalmosLectures} for discussions of this topology. It is helpful to note that the weak topology happens to coincide with the weak (and strong) operator topology for the corresponding Koopman operators. Further, in this paper we will be interested in the weak topology on $\mathcal{G}_X,$ by which we mean the subspace topology inherited by the weak topology.

We will need the following two metrics on $\mathcal{G}(Z)$ defined by

\begin{align*}
d(S,T) &:= \sup_E \mu(SE \triangle TE) \\
d'(S,T) &:= \mu \left(\{z \in Z : Sz \neq Tz \}\right)
\end{align*}

where the $\sup$ in the first definition is taken over all measurable sets $E$. These metrics were used by Halmos in his proof of the category theorem, see \cite{HalmosLectures}. We note that both metrics induce the same topology on $\mathcal{G}(Z)$, but that topology is not the weak topology. Moreover, they satisfy $d(S,T) \le d'(S,T)$ for all $S,T \in \mathcal{G}(Z).$ The last important note is that $d'$ is invariant under multiplication by transformations. That is to say, for all $R,S,T \in \mathcal{G}(Z),$

$$d'(RS,RT) = d'(S,T) = d'(SR,TR).$$

Let $L^2(Z|X)$ denote the Hilbert module over $L^{\infty}(X).$ More precisely, for $f \in L^2(Z),$

$$ f \in L^2(Z|X) \text{ if and only if } \bbe(\abs{f}^2|X)^{1/2} \in L^{\infty}(X),$$

where $\bbe(f | X)$ is the conditional expectation of $f$ with respect to $X$. More specifically, it is the conditional expectation with respect to $\mathcal{A}:= \{\pi^{-1}(A) : A \in \mathcal{L} \},$ where $\mathcal{L}$ is the Lebesgue sigma algebra on $X$.  Let
	
$$\norm{f}_{L^2(Z|X)} :=  \bbe(\abs{f}^2|X)^{1/2}$$

and

$$\langle f,g \rangle_{L^2(Z|X)} := \bbe(f \overline{g} |X). $$

For more on $L^2(Z|X),$ see \cite{Tao}. One important result of $L^2(Z|X)$ that we do wish to emphasize for later is the Cauchy-Schwarz Inequality.

\begin{prop} \label{Thm:C-S}
Let $f,g \in L^2(Z|X).$ Then

$$\abs{\langle f,g \rangle_{L^2(Z|X)}} \le \norm{f}_{L^2(Z|X)} \norm{g}_{L^2(Z|X)} \text{ a.e.}$$
\end{prop}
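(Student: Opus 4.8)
The plan is to reduce this module inequality to the classical scalar Cauchy--Schwarz inequality applied fibrewise. Since $(Z,\mu) = (X \times Y, m \times \eta)$ is a product and the conditioning $\sigma$-algebra $\mathcal{A}$ is generated by the first coordinate, Fubini's theorem identifies the conditional expectation with integration over the $Y$-fibre: for $h \in L^1(Z)$ one has $\bbe(h|X)(x) = \int_Y h(x,y)\,d\eta(y)$ for $m$-a.e. $x$. So the first step is bookkeeping: note that $f,g \in L^2(Z|X)$ forces $f,g \in L^2(Z)$ (because $\norm{f}_{L^2(Z)}^2 = \int_X \bbe(\abs{f}^2|X)\,dm \le \norm{\bbe(\abs{f}^2|X)}_{L^\infty(X)} < \infty$), hence $f\overline{g} \in L^1(Z)$ by the ordinary Cauchy--Schwarz inequality, so that all of $\bbe(f\overline{g}|X)$, $\bbe(\abs{f}^2|X)$, $\bbe(\abs{g}^2|X)$ are genuinely given by the corresponding fibre integrals.

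Next, the defining condition $\bbe(\abs{f}^2|X)^{1/2} \in L^\infty(X)$ says precisely that $x \mapsto \int_Y \abs{f(x,y)}^2\,d\eta(y)$ is an essentially bounded function, in particular finite for $m$-a.e. $x$; thus $f(x,\cdot) \in L^2(Y,\eta)$ for $m$-a.e. $x$, and likewise $g(x,\cdot) \in L^2(Y,\eta)$ for $m$-a.e. $x$. Fixing an $x$ outside the union of the relevant $m$-null sets, the classical Cauchy--Schwarz inequality in $L^2(Y,\eta)$ gives
$$ \abs{\int_Y f(x,y)\overline{g(x,y)}\,d\eta(y)} \le \left(\int_Y \abs{f(x,y)}^2\,d\eta(y)\right)^{1/2} \left(\int_Y \abs{g(x,y)}^2\,d\eta(y)\right)^{1/2}, $$
which is exactly $\abs{\langle f,g\rangle_{L^2(Z|X)}(x)} \le \norm{f}_{L^2(Z|X)}(x)\,\norm{g}_{L^2(Z|X)}(x)$. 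As this holds for $m$-a.e. $x$, the inequality holds a.e., which is the assertion of Proposition \ref{Thm:C-S}.

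There is no serious obstacle; the only care needed is to fix a single $m$-null set outside of which (i) the identity "conditional expectation $=$ fibre integral" holds simultaneously for $f$, $g$, $f\overline{g}$, $\abs{f}^2$, $\abs{g}^2$, and (ii) the slices $f(x,\cdot)$ and $g(x,\cdot)$ lie in $L^2(Y,\eta)$ --- both of which follow from Fubini's theorem together with the finiteness built into the definition of $L^2(Z|X)$. If one instead prefers to argue purely from the abstract properties of conditional expectation (positivity and $L^\infty(X)$-linearity), one can mimic the scalar proof via $0 \le \bbe(\abs{f - \lambda g}^2|X)$ for bounded $\mathcal{A}$-measurable $\lambda$, expanding and then taking $\lambda = \langle f,g\rangle_{L^2(Z|X)}/(\norm{g}_{L^2(Z|X)}^2 + \epsilon)$ with $\epsilon \downarrow 0$; there the only (minor) subtlety is the regularization needed to handle the set where $\norm{g}_{L^2(Z|X)}$ vanishes, on which one checks that $g = 0$ a.e. over it and hence $\langle f,g\rangle_{L^2(Z|X)} = 0$ there as well.
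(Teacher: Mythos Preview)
Your proof is correct. The paper does not actually prove this proposition: it is stated as a basic fact about the Hilbert module $L^2(Z|X)$, with a reference to \cite{Tao} for background, and is then used downstream (e.g., in Proposition~\ref{Prop:C-SNorms}). Your fibrewise argument---identifying $\bbe(\cdot|X)$ with integration over $Y$ via Fubini and applying the classical Cauchy--Schwarz inequality in $L^2(Y,\eta)$ for almost every $x$---is exactly the standard way to establish it in the product setting of the paper, and the bookkeeping you do (checking $f\overline{g}\in L^1(Z)$, collecting the null sets) is appropriate. The alternative you sketch via $0 \le \bbe(\abs{f-\lambda g}^2|X)$ with $\mathcal{A}$-measurable $\lambda$ is the usual proof in the general (non-product) conditional setting and is also fine.
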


Next we give a definition for weakly mixing extensions, cf. \cite{Tao}.

\begin{defin} \label{Def:WeakMixingExtension}  
	An extension $T$ of $T'$ is said to be \textit{weakly mixing} if for all $f,g \in L^2(Z|X),$
	
	$$ \lim_{N \to \infty} \frac{1}{N} \sum_{n=0}^{N-1} \norm{\bbe(T^nf \overline{g} |X) - (T')^n \bbe(f|X) \bbe(g|X)}_{L^2(X)} = 0.$$	
\end{defin}

For other possible (equivalent) definitions of weakly mixing extensions, see \cite[p.192]{Glasner}.

We denote by $\mathcal{W}_X \subset \mathcal{G}_X$ the set of weakly mixing extensions on $Z$.

We finally prove that the Baire Category Theorem is applicable to $\mathcal{G}_X,$ and further that $\mathcal{G}_X$ is topologically a small subset of $\mathcal{G}(Z).$

\begin{prop} \label{Prop:GXIsClosed}
Suppose $Y$ has more than one point. Then $\mathcal{G}_X$ is closed and nowhere dense in $\mathcal{G}(Z).$
\end{prop}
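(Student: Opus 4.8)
Let me think about what needs to be shown. $\mathcal{G}_X$ consists of invertible measure-preserving transformations $T$ on $Z = X \times Y$ that descend to the factor $(X,m)$ via $\pi$. The condition "$T$ extends some $T'$" means precisely that $\pi \circ T$ depends only on the first coordinate — i.e., $\pi(T(x,y)) = \pi(T(x,y'))$ for a.e. $(x,y,y')$. Equivalently, $T$ preserves the sub-sigma-algebra $\mathcal{A} = \pi^{-1}(\mathcal{L})$: for every $A \in \mathcal{L}$, $T^{-1}(\pi^{-1}A) \in \mathcal{A}$.

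Two things to prove: closedness and empty interior.

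**Closedness.** I would characterize $\mathcal{G}_X$ by countably many "weak-closed" conditions. Since the weak topology on $\mathcal{G}(Z)$ is metrizable, it suffices to show $\mathcal{G}_X$ is sequentially closed. Suppose $T_k \to T$ weakly with $T_k \in \mathcal{G}_X$. Fix a dyadic interval $A \subset X$ (these generate $\mathcal{L}$, and it suffices to check the defining condition on a countable generating algebra). Each $T_k$ satisfies: there is a measurable $B_k \subset X$ with $T_k^{-1}(A \times Y) = B_k \times Y$ up to null sets. Weak convergence gives $\mu(T_k^{-1}(A\times Y) \,\triangle\, T^{-1}(A\times Y)) \to 0$ (using that the weak topology coincides with the strong operator topology on Koopman operators, so $\mathbbm{1}_{T_k^{-1}E} = T_k^{-1}\mathbbm{1}_E \to T^{-1}\mathbbm{1}_E$ in $L^2$, hence $\mu(T_k^{-1}E \triangle T^{-1}E)\to 0$; apply to $E = A \times Y$ and note $T$, $T_k$ invertible measure-preserving means $T^{-1}E$ is the relevant preimage set). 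The set of "cylinder-over-$X$" sets $\{B \times Y : B \in \mathcal{L}\}$ is closed in $(\mathcal{M}_\mu, \mu(\cdot\triangle\cdot))$ because it equals the set of sets measurable with respect to the complete sub-sigma-algebra $\mathcal{A}$, and $L^2(\mathcal{A})$ is a closed subspace of $L^2(Z)$ (so its indicator functions form a closed set). Hence $T^{-1}(A\times Y)$ is of the form $B\times Y$. Doing this for all dyadic $A$ simultaneously (countably many conditions) and using a standard monotone-class/generating-algebra argument shows $T^{-1}(\pi^{-1}\mathcal{L}) \subseteq \pi^{-1}\mathcal{L}$, i.e., $T$ preserves $\mathcal{A}$; since $T$ is invertible and measure-preserving this forces $T^{-1}\mathcal{A} = \mathcal{A}$, so $T$ factors through $\pi$ to give some invertible $T' \in \mathcal{G}(X)$. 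Thus $T \in \mathcal{G}_X$.

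**Nowhere dense.** Since $\mathcal{G}_X$ is closed, it suffices to show it has empty interior, i.e., every weak neighborhood of every $T \in \mathcal{G}_X$ contains a transformation not in $\mathcal{G}_X$. Here is where I'd do the one genuine construction. Fix $T \in \mathcal{G}_X$ and a basic weak neighborhood determined by $\epsilon > 0$ and finitely many sets $E_1,\dots,E_r \subset Z$. Since $Y$ has more than one point, pick $Y_0 \subset Y$ with $0 < \eta(Y_0) < 1$, and pick a small dyadic interval $I \subset X$ with $m(I)$ tiny. The idea: modify $T$ only on the "column" $\pi^{-1}(I) = I \times Y$ by composing with a transformation $R$ supported there that genuinely mixes the $X$ and $Y$ directions — e.g., an exchange that swaps $I' \times Y_0$ with $I'' \times Y_0$ for two disjoint dyadic subintervals $I', I'' \subset I$ of equal length, leaving everything else fixed. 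Then $R \in \mathcal{G}(Z)$, $d'(\mathrm{id}, R) \le 2m(I)$, so by $d'$-invariance $d'(T, RT) = d'(\mathrm{id},R) \le 2 m(I)$, which (using $d \le d'$) makes $RT$ land in the prescribed weak neighborhood of $T$ once $m(I)$ is small enough. But $RT \notin \mathcal{G}_X$: one checks $\pi(RT(x,y))$ genuinely depends on $y$ for $(x,y)$ in a positive-measure subset, because $R$ moves points of $I'\times Y_0$ into the $I''$-column while fixing $I' \times (Y\setminus Y_0)$ in the $I'$-column — so $\pi \circ RT$ cannot be a function of the first coordinate alone. Hence the neighborhood is not contained in $\mathcal{G}_X$, and $\mathcal{G}_X$ has empty interior.

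**Main obstacle.** The closedness argument is the more delicate part: one must be careful that weak convergence of the $T_k$ really does give $L^2$-convergence of indicator functions of the preimages (invoking the coincidence of the weak topology with the strong operator topology on Koopman operators, already noted in the excerpt), and that the defining property "$T$ factors through $\pi$" is correctly phrased as "$T$ preserves the sub-sigma-algebra $\mathcal{A}$" so that it is manifestly a closed condition via closedness of $L^2(\mathcal{A}) \subset L^2(Z)$ — together with the standard measure-theoretic fact that a measure-preserving map preserving a sub-sigma-algebra induces a factor. The nowhere-dense part is conceptually easy given the $d'$-invariance already recorded, but one must make sure the explicit perturbation $R$ is genuinely measure-preserving and genuinely destroys the factor property on a positive-measure set; choosing $R$ to be a simple involution (swap of two equal-measure boxes) keeps both verifications routine.
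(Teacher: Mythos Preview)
Your proposal is correct, and both halves are sound. The arguments, however, differ from the paper's in instructive ways.

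For closedness, the paper argues directly that the complement is open: given $T\notin\mathcal{G}_X$, it picks a cylinder $E$ with $TE$ not a cylinder, and then proves by a Borel--Cantelli argument that $\inf_C \mu(TE\triangle C)>0$ over cylinders $C$ of measure $\mu(E)$, which furnishes an explicit neighborhood of $T$ disjoint from $\mathcal{G}_X$. Your route is more structural: you recast ``$T$ extends some $T'$'' as ``$T$ preserves the sub-$\sigma$-algebra $\mathcal{A}=\pi^{-1}\mathcal{L}$'', then use metrizability of the weak topology and the fact that $L^2(\mathcal{A})$ is a closed subspace of $L^2(Z)$ to conclude sequential closedness. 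In effect, the paper's Borel--Cantelli computation is a bare-hands proof of the special case ``cylinder sets form a closed subset of the measure algebra'', which your argument absorbs into the single observation that $L^2(\mathcal{A})$ is closed. Your approach is shorter and more conceptual; the paper's has the advantage of producing an explicit weak neighborhood and not invoking metrizability.

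For nowhere density, both arguments perturb $T$ by a swap supported on a set of small measure and use $d\le d'$ together with $d'$-invariance. The paper pre-composes $T$ with a swap of two halves $A_1,A_2$ of a small non-cylinder set $A$; you post-compose with a swap of two explicit product boxes $I'\times Y_0$ and $I''\times Y_0$. The constructions are essentially equivalent, but your verification that the perturbed map fails to factor through $\pi$ (by exhibiting a positive-measure set on which $\pi\circ RT$ genuinely depends on $y$) is somewhat more transparent than the paper's one-line appeal to ``$A$ is not a cylinder set''.
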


\begin{proof}
We first prove that $\mathcal{G}_X$ is closed. Let $T \in \mathcal{G}(Z) \backslash \mathcal{G}_X.$ We wish to find a neighborhod of $T$ that is disjoint from $\mathcal{G}_X.$ To this end, let $E \subset Z$ be a cylinder set (to be precise, $E$ is of the form $D \times Y$ for some measurable $D \subset X$), such that $TE$ is not a cylinder set, even up to measure zero. Define $M := \mu(E).$ Then for all cylinder sets $C$ with $\mu(C)=M, \mu(TE \triangle C) > 0.$

We claim that indeed $\inf_C \mu(TE \triangle C) > 0,$ where the $\inf$ is taken over all cylinder sets with measure exactly $M$. Suppose to the contrary that $\inf_C \mu(TE \triangle C)=0$. Let $C_n$ be a sequence of such cylinder sets such that not only $\mu(TE \triangle C_n) \to 0,$ but further such that

$$\sum_{n=1}^{\infty} \mu(TE \backslash C_n) < \infty. $$

Define

$$\hat{C} := \bigcup_{n=1}^{\infty} \bigcap_{m \ge n} C_m.$$

We claim that $\mu(TE \triangle \hat{C}) = 0.$ As $\hat{C}$ is clearly a cylinder set, we will arrive at a contradiction.

First consider 

$$\hat{C} \backslash TE = \left(\bigcup_{n=1}^{\infty} \bigcap_{m \ge n} C_m \right) \backslash TE = \bigcup_{n=1}^{\infty} \bigcap_{m \ge n} (C_m \backslash TE).$$

Now because $\mu(C_m \triangle TE) \to 0, \mu\left(\bigcap_{m \ge n} (C_m \backslash TE) \right) = 0$ for all $n.$ But then

$$\mu(\hat{C} \backslash TE) \le \sum_{n=1}^{\infty} \mu\left(\bigcap_{m \ge n} (C_m \backslash TE) \right) = 0.$$

On the other hand, 

$$TE \backslash \hat{C} = TE \backslash \left(\bigcup_{n=1}^{\infty} \bigcap_{m \ge n} C_m \right) = \bigcap_{n=1}^{\infty} \bigcup_{m \ge n} (TE \backslash C_m). $$

But by assumption, $\sum_{n=1}^{\infty} \mu(TE \backslash C_n) < \infty,$ so by the Borel-Cantelli lemma, $\mu(TE \backslash \hat{C})=0.$

Now, let $\epsilon := \inf_C \mu(TE \triangle C).$ We claim that for any $S \in \mathcal{G}_X, S \notin N_{\epsilon}(T;E).$ Indeed, $SE$ is (up to a null set) a cylinder set, and $\mu(SE)=M,$ so $\mu(TE \triangle SE) \ge \epsilon$ by definition of $\epsilon$.

Now because $\mathcal{G}_X$ is closed, in order to prove that it is nowhere dense, it is sufficient to show that $\mathcal{G}(Z) \backslash \mathcal{G}_X$ is dense. Fix $T \in \mathcal{G}_X$, let $\epsilon > 0$ and let

$$N_{\epsilon}(T) = \{S \in \mathcal{G}(Z): \mu(TE_i \triangle SE_i) < \epsilon, i = 1, \ldots, n \},$$

where $E_i$ are measurable sets. Now let $A \subset Z$ be a measurable set such that $0 \mu(A) < \epsilon,$ and $A$ is not a cylinder set. Further let $B \subset Z$ be a cylinder set such that $\mu(A \cap B) = \mu(A \cap B^c),$ and define $A_1 := A \cap B, A_2 := A \cap B^c.$

We now take $S \in \mathcal{G}(Z)$ with the following properties: if $z \in Z \backslash A, Sz := Tz, SA_1 = TA_2,$ and $SA_2=TA_1.$ Note that because $T$ is an extension and $A$ is not a cylinder set, $S \notin \mathcal{G}_X.$ Further note that $\{z \in Z : Sz \neq Tz  \}=A.$ Therefore, 

$$\sup_E \mu(TE \triangle SE) = d(T,S) \le d'(T,S) = \mu(A) < \epsilon.$$

So $S \in N_{\epsilon}(T).$  
\end{proof}

By Proposition \ref{Prop:GXIsClosed}, $\mathcal{G}_X$ is a closed subset of a Baire space, so $\mathcal{G}_X$ is itself a Baire space. Further, because $\mathcal{G}_X$ is nowhere dense, the classical Halmos and Rokhlin results can provide no information about $\mathcal{G}_X.$

\section{Discrete Extensions} \label{Sec:Discrete}
As stated in Section 2, throughout this paper we will let $(X,m)$ be the unit interval with Lebesgue measure. For this section, let $Z=X \times \{1,\ldots, L\},$ with $L \ge 2$, $w$ be a probability measure on $\{1, \ldots, L \}$, and $w_i := w(i)$ (without loss of generality, $w_i \neq 0$ for all $i$). Let $\mu$ be the product measure of $m$ and $w$ on $Z$.

In this section we will be exploring some results regarding these discrete extension measure spaces. We begin by showing that such systems can never be weakly mixing extensions.

\begin{prop} \label{Prop:NoWeaklyMixingFinite}
Let $(Z,\mu), (X,m)$ be as above. Then $\mathcal{W}_X = \emptyset.$
\end{prop}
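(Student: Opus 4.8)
The plan is to refute weak mixing of any given extension $T$ of any $T'$ by evaluating the average in Definition~\ref{Def:WeakMixingExtension} on the single pair $f=g=\mathbf{1}_A$, where $A:=X\times\{1\}\subseteq Z$. Note $\mathbf{1}_A\in L^2(Z|X)$, since $\bbe(\abs{\mathbf{1}_A}^2|X)^{1/2}$ is the constant $w_1^{1/2}\in L^\infty(X)$, and $\bbe(\mathbf{1}_A|X)$ is the constant $w_1$, so $(T')^n\bbe(f|X)\bbe(g|X)$ is the constant function $w_1^2$ for every $n$. Thus, were $T$ a weakly mixing extension, we would have
$$\lim_{N\to\infty}\frac{1}{N}\sum_{n=0}^{N-1}\norm{\bbe(T^n\mathbf{1}_A\cdot\mathbf{1}_A|X)-w_1^2}_{L^2(X)}=0.$$
I will show the terms of this average are bounded below by a positive constant independent of $n$, which is the desired contradiction.

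The computation rests on the fact that, because $\pi\circ T=T'\circ\pi$ and hence $\pi\circ T^n=(T')^n\circ\pi$, the map $T^n$ sends the fiber $\{x\}\times\{1,\dots,L\}$ into $\{(T')^nx\}\times\{1,\dots,L\}$; write $T^n(x,i)=((T')^nx,\phi_n(x,i))$ for the resulting measurable map $\phi_n\colon Z\to\{1,\dots,L\}$. Then $T^n\mathbf{1}_A\cdot\mathbf{1}_A=\mathbf{1}_{T^{-n}A\cap A}$, and over a point $x$ the fiber of $T^{-n}A\cap A$ is $\{1\}$ when $\phi_n(x,1)=1$ and is empty otherwise; since (recall $\bbe(h|X)(x)=\sum_i w_i h(x,i)$ here) the conditional expectation just integrates out the fiber against $w$, this gives
$$\bbe(T^n\mathbf{1}_A\cdot\mathbf{1}_A|X)(x)=w_1\cdot\mathbf{1}\{\phi_n(x,1)=1\},$$
a function taking only the values $0$ and $w_1$. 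Consequently the integrand $\abs{\,w_1\mathbf{1}\{\phi_n(x,1)=1\}-w_1^2\,}$ equals either $w_1(1-w_1)$ or $w_1^2$ at a.e.\ $x$, and because $L\ge2$ and all $w_i>0$ we have $0<w_1<1$, so both quantities are at least $c:=w_1\min(w_1,1-w_1)>0$. Hence $\norm{\bbe(T^n\mathbf{1}_A\cdot\mathbf{1}_A|X)-w_1^2}_{L^2(X)}\ge c$ for every $n\ge0$, so every partial average is $\ge c$ and the limit above cannot be $0$. This contradicts $T\in\mathcal{W}_X$, and since $T$ was arbitrary, $\mathcal{W}_X=\emptyset$.

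I do not expect any real obstacle here: the argument is a direct computation once one observes that the vertical indicator $\mathbf{1}_A$ has $T$-orbit consisting again of indicators of ``fiberwise'' sets, so its relative self-correlation is quantized in $\{0,w_1\}$ and can never settle near $w_1^2$. The only points deserving a line of care are the measurability of $x\mapsto\mathbf{1}\{\phi_n(x,1)=1\}$ (immediate, as $\phi_n$ is the fiber component of the measurable map $T^n$) and the strict inequality $w_1<1$, which is precisely where the hypothesis $L\ge2$ enters. One could equivalently run the argument through the skew-product form $T(x,i)=(T'x,\sigma_x(i))$ with $\sigma_x$ an a.e.-defined permutation of $\{1,\dots,L\}$ --- forced by invertibility and measure-preservation --- but this structure is not actually needed for the estimate.
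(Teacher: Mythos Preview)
Your proof is correct. Both your argument and the paper's hinge on the same mechanism: on a finite fiber, the relative self-correlation $\bbe(T^nf\overline{f}\mid X)$ is quantized according to a fiberwise permutation and therefore cannot settle at the required constant. The paper chooses a relative-mean-zero $f$ (equal to $1$ on levels $2,\dots,L$ and to $-\sum_{i\ge 2}w_i/w_1$ on level $1$) so that the target constant is $0$, and then runs a short case analysis on whether $\sigma_{n,x}(1)=1$ to show each possible value of $\bbe(T^nf\overline{f}\mid X)(x)$ is nonzero. Your choice $f=\mathbf{1}_{X\times\{1\}}$ is more economical: the quantization $\{0,w_1\}$ is immediate, the target $w_1^2$ lies strictly between these two values once $L\ge 2$, and no case split or sign estimate is needed. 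As a small bonus, your argument carries over verbatim to countably many levels, whereas the paper reaches that case only after an extra decomposition into finite invariant subsystems (using that $T$ must preserve level weights).
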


\begin{proof}
Fix $T \in \mathcal{G}_X.$ It suffices to show that there exists an $f \in L^2(Z|X)$ with relative mean zero (that is, $\bbe(f|X)=0$ $m-$almost everywhere) such that 

$$\lim_{N \to \infty}\frac{1}{N} \sum_{n=0}^{N-1}{ \left( \int_X \abs{\bbe(T^nf \overline{f}|X)}^2 dm \right)^{1/2} } \neq 0.$$ 

In particular, we will construct $f$ such that $\bbe(T^nf \overline{f}|X)(x)$ can take only a finite number of possible values, none of which are 0. Thus $\abs{\bbe(T^nf \overline{f}|X)}^2(x)$ is always positive, $\frac{1}{N} \sum_{n=0}^{N-1}{\abs{\bbe(T^nf \overline{f}|X)}^2(x)}$ is bounded away from 0, and $\frac{1}{N} \sum_{n=0}^{N-1}{ \left( \int_X \abs{\bbe(T^nf \overline{f}|X)}^2 dm \right)^{1/2} }$ cannot converge to the zero function on $X$. 

Consider $f(x,y)$, where $f(x,i) = 1$ for all $x \in X, i = 2, \ldots, L$, and 

$$f(x,1)=\frac{-\sum_{i=2}^{L}{w_i}}{w_1}$$

for all $x \in X$. It is easy to see that $f$ has relative mean zero when $L \ge 2$,which is why we made this assumption at the beginning of the section.

Let $\sigma_{n,x}(i)$ be such that $T^n(x,i)=((T')^nx,\sigma_{n,x}(i))$ for all $(x,i) \in Z$. Now, 

$$\bbe(T^nf \overline{f}|X)(x) = \sum_{j=1}^{L}{w_j f(x,j) \fatn{j}} = \sum_{j=1}^{L}{w_j f(x,j) f(x, \sigma_{n,x}(j))},$$

with the last equality because $f$ is constant on any given level. Thus we see that because $T$ is invertible, $\sigma_{n,x}$ is a permutation on an $L$ element set, and the value of $\bbe(T^nf \overline{f}|X)(x)$ is completely determined by the specific permutation $\sigma_{n,x}$. As there are $L!$ permutations of the $L$ levels, there are finitely many possible values of $\bbe(T^nf \overline{f}|X)(x)$. 

To see $\bbe(T^nf \overline{f}|X)(x) \neq 0$, consider 2 cases. In the first case, we have $\sigma_{n,x}(1)=1$. In this case it is easy to see that every summand of $\sum_{j=1}^{L}{w_j f(x,j) f(x, \sigma_{n,x}(j))}$ is positive, and thus the sum is positive (in particular, nonzero). So now suppose $\sigma_{n,x}(i)=1, i \neq 1$. In this case we have

$$\sum_{j=1}^{L}{w_j f(x,j) f(x, \sigma_{n,x}(j))}= f(x,1)(w_1+w_i) + \left(\sum_{j=2}^{L}{w_j}\right)-w_i.$$

Consider 

$$f(x,1)(w_1+w_i) = \frac{-\sum_{j=2}^{L}w_j}{w_1} (w_1 + w_i) = \left(-\sum_{j=2}^{L}w_j\right) \left(1 + \frac{w_i}{w_1}\right).$$ 

Note that $\left(-\sum_{j=2}^{L}w_j\right) \left(1 + \frac{w_i}{w_1}\right) \le -\sum_{j=2}^{L}w_j$. Thus,

$$f(x,1)(w_1+w_i) + \left(\sum_{j=2}^{L}{w_j}\right)-w_i \le \left(-\sum_{j=2}^{L}w_j\right) + \left(\sum_{j=2}^{L}{w_j}\right)-w_i = - w_i < 0.$$ So $\bbe(T^nf \overline{f}|X)(x)$ is always nonzero, and $\abs{\bbe(T^nf \overline{f}|X)}^2(x)$ is always positive, as desired.
\end{proof}

We make two notes here. First, the proof of Proposition \ref{Prop:NoWeaklyMixingFinite} never used any assumptions on the factor, $(X,m,T'),$ and thus it will hold when the factor is any probability space, with any measure preserving transformation on that space. Second, the proof is still valid in the case that $Z$ has countably many levels instead of finitely many. The key observation is that for almost all $z$, if $z, Tz$ are on levels $k_1, k_2$ respectively, then $w(k_1)=w(k_2).$ As for any fixed $\alpha \in (0,1)$, there can be only finitely many levels $k$ with $w(k)=\alpha, T$ decomposes into invariant subsystems, to each of which we can apply Proposition \ref{Prop:NoWeaklyMixingFinite}. 

Though $\mathcal{W}_X$ is empty on these discrete extension spaces, they are still worth exploring. But before we can proceed, we will henceforth suppose that the probability measure $w$ is the normalized counting measure. That is, $w_i= \frac{1}{L}$ for all $i$. With this assumption, we extend the notion of dyadic sets and permutations on $X$ to  dyadic sets and permutations on $Z.$

\begin{defin} \label{Def:DyadicFinite} 
	If $D$ is a dyadic interval of rank $k$ in $X$, then a dyadic square of rank $k$ in $Z$ is a set of the form $D \times \{i\}$. A dyadic set in $Z$ is a union of dyadic squares. A dyadic permutation of rank $k$ on $Z$ is a permutation of the dyadic squares of rank $k$. A \textit{column-preserving (dyadic) permutation} (of rank $k$) on $Z$ is a dyadic permutation on $Z$ which is an extension of a dyadic permutation on $X$.
\end{defin}

We wish to generalize the fact that dyadic permutations are dense in $\mathcal{G}(X)$ to density of column-preserving permutations in $\mathcal{G}_X$. To this end, we make a couple notes. First we introduce the following notation: we write $A \subset i$ if there exists $A' \subset X$ such that $A = A' \times \{i\}$. 

Second, we will require the use of the following lemma by Halmos (for proof, see \cite[p.67]{HalmosLectures}).

\begin{lem} \label{Lem:DyadicSetsPartition}
Let $\{E_i : i = 1, \ldots n \}$ partition the unit interval, and $r_i$ be dyadic rationals such that $\sum_{i=1}^n r_i = 1$ and $\abs{m(E_i)-r_i} < \delta$ for some $\delta > 0$ and for all $i$. Then there exists $\{F_i : i = 1, \ldots n \}$, dyadic sets that partition the unit interval such that $m(F_i)=r_i$ and $m(E_i \triangle F_i) < 2 \delta$ for all $i$.
\end{lem}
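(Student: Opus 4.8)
The plan is to approximate each $E_i$ by a dyadic set of a common, sufficiently fine rank, then correct the measures exactly by transferring individual dyadic intervals between the pieces.

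First I would fix a rank $k$ large enough to serve two purposes simultaneously: (i) each $r_i$ equals an integer multiple $a_i 2^{-k}$ of $2^{-k}$, which is possible since the $r_i$ are dyadic rationals, and then automatically $\sum_{i=1}^n a_i = 2^k$; and (ii) each $E_i$ admits a good dyadic approximation. For the latter, since finite unions of dyadic intervals are dense in the measure algebra of the unit interval, for any $\eta > 0$ I can choose $k$ and dyadic sets $G_i$ of rank $k$ with $m(E_i \triangle G_i) < \eta$ for all $i$. The $G_i$ need not be disjoint nor cover the unit interval, so I would disjointify them: put $\tilde F_1 := G_1$, $\tilde F_i := G_i \setminus (G_1 \cup \cdots \cup G_{i-1})$ for $2 \le i \le n-1$, and $\tilde F_n := [0,1] \setminus (\tilde F_1 \cup \cdots \cup \tilde F_{n-1})$. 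Then $\{\tilde F_i\}$ is a partition of the unit interval into dyadic sets of rank $k$, and an elementary estimate (using that the $E_j$ are pairwise disjoint and cover everything) bounds $m(E_i \triangle \tilde F_i)$ by a constant depending only on $n$ times $\eta$; choosing $\eta$ small, I may assume $m(E_i \triangle \tilde F_i) < \delta/2$ for every $i$. (Alternatively one could define $\tilde F_i$ by assigning each rank-$k$ dyadic interval to the index $j$ maximizing its overlap with $E_j$ and invoke the Lebesgue density theorem; this yields the same conclusion as $k \to \infty$.)

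Next, write $m(\tilde F_i) = b_i 2^{-k}$ with $b_i \in \mathbb{Z}_{\ge 0}$ and $\sum_i b_i = 2^k = \sum_i a_i$. Since $\abs{b_i 2^{-k} - r_i} = \abs{m(\tilde F_i) - r_i} \le m(E_i \triangle \tilde F_i) + \abs{m(E_i) - r_i} < \delta/2 + \delta$, the counts $b_i$ are close to the targets $a_i$. I would then rebalance: let $S := \{i : b_i > a_i\}$ and $D := \{i : b_i < a_i\}$, which are disjoint index sets with $\sum_{i \in S}(b_i - a_i) = \sum_{i \in D}(a_i - b_i)$. For each $i \in S$ remove any $b_i - a_i$ of the rank-$k$ dyadic intervals making up $\tilde F_i$, and redistribute exactly these removed intervals among the pieces indexed by $D$, adjoining $a_i - b_i$ of them to $\tilde F_i$ for each $i \in D$; for $i \notin S \cup D$ leave $\tilde F_i$ unchanged. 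Call the resulting sets $F_i$. By construction they are dyadic, they partition the unit interval, and $m(F_i) = a_i 2^{-k} = r_i$.

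Finally I would verify the symmetric-difference bound. Since for each $i$ intervals are only added to, or only removed from, $\tilde F_i$ (never both), $m(\tilde F_i \triangle F_i) = \abs{b_i - a_i} 2^{-k} = \abs{m(\tilde F_i) - r_i} < \delta/2 + \delta$, and therefore $m(E_i \triangle F_i) \le m(E_i \triangle \tilde F_i) + m(\tilde F_i \triangle F_i) < \delta/2 + (\delta/2 + \delta) = 2\delta$, as required. The routine parts are the density approximation and the disjointification estimate; the step demanding the most care is the combinatorial rebalancing together with the bookkeeping of constants, since it is precisely the requirement $m(E_i \triangle \tilde F_i) < \delta/2$ (rather than merely ``small'') that makes the final bound come out to exactly $2\delta$.
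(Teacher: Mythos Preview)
The paper does not actually prove this lemma; it simply cites Halmos' \emph{Lectures on Ergodic Theory}, p.~67, so there is no in-paper proof to compare against. Your argument is correct and is essentially the standard one attributed to Halmos: approximate each $E_i$ by a dyadic set of a common fine rank, disjointify to obtain a dyadic partition $\{\tilde F_i\}$ close to $\{E_i\}$, and then shuffle finitely many rank-$k$ intervals between the pieces to hit the target measures $r_i$ exactly. The bookkeeping you set up (choosing $m(E_i\triangle\tilde F_i)<\delta/2$ so that $m(\tilde F_i\triangle F_i)<3\delta/2$ and the total stays below $2\delta$) is the right way to make the constant come out, and your observation that each $\tilde F_i$ is only enlarged or only shrunk (never both) is exactly what makes $m(\tilde F_i\triangle F_i)=\lvert m(\tilde F_i)-r_i\rvert$ hold. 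Nothing is missing.
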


We now move to the main result of this section.


\begin{thm}[Density of column-preserving permutations] \label{Thm:DensityOfPermutationsFinite} 
Column-preserving permutations are dense in $\mathcal{G}_X.$ More precisely, let $T \in \mathcal{G}_X.$  Given $N_{\epsilon}(T)$, a dyadic neighborhood of $T$, there exists $Q \in N_{\epsilon}(T)$, a column-preserving permutation.
\end{thm}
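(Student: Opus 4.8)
The plan is to reduce the problem to Halmos's classical density theorem for dyadic permutations on the unit interval, applied simultaneously to the factor transformation $T'$ and to the action of $T$ on each column, while carefully synchronizing the dyadic partitions across columns so that the resulting permutation genuinely descends to a dyadic permutation of $X$. First I would reduce to the case where the test sets $E_1,\dots,E_n$ defining $N_\epsilon(T)$ are dyadic sets in $Z$; since we are given a dyadic neighborhood this is immediate, and by refining we may assume all the $E_i$ are dyadic sets of a common rank, unions of dyadic squares $D\times\{i\}$.

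The heart of the argument is the construction of $Q$. Consider the images $TE_i$. Each $TE_i$ is a measurable subset of $Z$, and intersecting with each level $\{j\}$ gives measurable subsets of $X\times\{j\}$. I would apply Lemma \ref{Lem:DyadicSetsPartition} to approximate, on each level $j$ simultaneously, the traces of the sets $T(D\times\{i\})$ (ranging over the finitely many rank-$k$ dyadic squares $D\times\{i\}$) by dyadic sets of some large rank $k'$, with error controlled by $\epsilon/(\text{something})$. Because $T$ is column-preserving, the projection to $X$ of $T(D\times\{i\})$ equals $T'D$, independent of $i$; so I should first approximate $T'$ itself by a dyadic permutation $\bar Q$ of rank $k'$ on $X$ (this is Halmos's theorem on $\mathcal{G}(X)$), and then, on each column, choose how $Q$ permutes the dyadic squares lying over each dyadic interval $J$ of rank $k'$ among the dyadic squares lying over $\bar Q J$ — a column-to-column assignment that is a bijection on each fiber of $L$ points. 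The freedom to choose these fiberwise bijections is exactly what lets us match $TE_i$ to within $\epsilon$: for each dyadic interval $J$ of rank $k'$ in $X$ and each target level $j$, we count how many of the rank-$k$ squares making up $E_i$ have their $T$-image meeting $J\times\{j\}$ in large measure, and we route that many squares accordingly; the counting is consistent across $i$ because the $E_i$'s squares are disjoint and $T$ is measure-preserving, so the fiber of $L$ squares over $J$ gets distributed among the $L$ squares over $\bar Q J$ without overflow, up to the Lemma's approximation error.

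The main obstacle — and the step I would spend the most care on — is this consistency/bookkeeping: one must verify that the fiberwise bijections demanded by the different sets $TE_i$ (and by the complement, to get a genuine permutation) can all be realized by a \emph{single} column-preserving permutation $Q$, i.e. that there is no conflict. This is where the measure-preservation of $T$, the disjointness of the dyadic building blocks, and the uniform ($w_i=1/L$) vertical measure all get used: the "demand" placed on the fiber over each rank-$k'$ interval is a sub-permutation of an $L$-element set, and sub-permutations always extend to permutations. Once $Q$ is constructed, bounding $\mu(TE_i\triangle QE_i)<\epsilon$ is a routine estimate: it splits into the error from approximating $T'$ by $\bar Q$, the error from Lemma \ref{Lem:DyadicSetsPartition} in dyadifying the columnwise images, and the (zero) error from the exact combinatorial routing, each made small by taking $k'$ large and the Lemma's $\delta$ small relative to $\epsilon/n$. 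I would close by noting $Q\in\mathcal{G}_X$ since by construction it extends the dyadic permutation $\bar Q\in\mathcal{G}(X)$.
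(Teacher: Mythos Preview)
Your proposal has the right ingredients but the two-stage strategy --- first approximate $T'$ by a dyadic permutation $\bar Q$ via Halmos's theorem on $\mathcal{G}(X)$, then choose a level-permutation $\tau_J$ over each rank-$k'$ interval $J$ --- has a real gap. Halmos's approximation only guarantees $m(T'D\triangle\bar Q D)<\delta$ for the finitely many rank-$k$ intervals $D$ you fed in as test sets. But to make $\mu\bigl(T(D\times\{i\})\triangle Q(D\times\{i\})\bigr)$ small you need, level by level, that $T'\{x\in D:\sigma_x(i)=j\}$ is close to $\bar Q\bigl(\bigcup_{J\subset D,\,\tau_J(i)=j}J\bigr)$, and the sets $\{x\in D:\sigma_x(i)=j\}$ are not among your test sets, so you have no control over how $\bar Q$ treats them. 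Your ``counting/routing'' step does not address this: even if you choose the $\tau_J$ so that $\bigcup_{J:\tau_J(i)=j}J$ approximates $\{x:\sigma_x(i)=j\}$ well in measure, applying $\bar Q$ to the former and $T'$ to the latter can produce sets that are far apart, since weak approximation of $T'$ says nothing about these finer pieces.

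The paper avoids this by not approximating $T'$ independently at all. It forms the partition $\{D_i\cap TD_j\}$ of $Z$, projects each level's partition to $X$, and takes the \emph{common refinement} $P'=\{\hat A_\lambda\}$ across all levels; by construction, on each $\hat A_\lambda$ the level-routing data (which $k_1$ goes to which $k_2$) is constant. Lemma~\ref{Lem:DyadicSetsPartition} is then applied twice on $X$: once to approximate $\{\hat A_\lambda\}$ by dyadic sets $\{A_\lambda\}$, and once to approximate $\{(T')^{-1}\hat A_\lambda\}$ by dyadic sets $\{B_\lambda\}$ with $m(A_\lambda)=m(B_\lambda)$. The factor permutation $Q'$ is then \emph{defined} as any dyadic permutation sending $B_\lambda\mapsto A_\lambda$, and $Q$ lifts it via $B_{\lambda,k_1}\mapsto A_{\lambda,k_2}$ according to the (constant-on-pieces) level data. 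The base permutation and the fiber routing are thus built simultaneously from the same partition, which is exactly what makes the symmetric-difference estimate go through. Your proposal would be repaired by feeding the refinement pieces $\hat A_\lambda$ and their $(T')^{-1}$-images into the approximation on $X$ --- but at that point you are carrying out the paper's argument.
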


\begin{proof}
Without loss of generality, assume 

$$N_{\epsilon}(T)=\{S \in \mathcal{G}_X: \mu(TD_l \triangle SD_l)<\epsilon, l = 1, \ldots, L(2^n) \},$$ where $D_l$ are every dyadic square of some fixed rank $n$ (note that $D_{l_1}, D_{l_2}$ are disjoint up to boundary points). 

Let $k \in \{1, \ldots, L\}$, and let $P_k := \{D_i \cap TD_j|D_i \subset k, j = 1,\ldots, L(2^n)\}$. Note that $P_k$ partitions level $k$. If $\pi$ is the natural projection onto $X$, then let $P'_k := \pi P_k = \{\pi E| E \in P_k\}$. $P'_k$ is a partition of $X$. Let $P' = \{\hat{A}_{\lambda}| \lambda \in \Lambda \}$ be a common refinement of $P'_k$ for $k = 1, \ldots, L$, and let $P = \{\hat{A}_{\lambda,k} | \lambda \in \Lambda, k = 1, \ldots, L\}$ be a partition of $Z$ obtained by lifting every element of $P'$ to every level ($\hat{A}_{\lambda,k} \subset k$). 

Applying a weaker version of Lemma \ref{Lem:DyadicSetsPartition} (one where we do not care about the value of $\abs{m(E_i)-r_i}$ in the formulation of the lemma) to the partition $P'$, we obtain a partition $\{A_{\lambda}\}$ of $X$ into dyadic sets so that $m(\hat{A}_{\lambda} \triangle A_{\lambda})< \frac{L\epsilon}{2 \abs{\Lambda}}$. Applying Lemma \ref{Lem:DyadicSetsPartition} again, we get a partition of $X$ into dyadic sets $B_{\lambda}$ so that $m((T')^{-1}\hat{A}_{\lambda} \triangle B_{\lambda})< \frac{L \epsilon}{2 \abs{\Lambda}}$. Note the full strength of Lemma \ref{Lem:DyadicSetsPartition} guarantees we can select this partition so that $m(A_{\lambda})=m(B_{\lambda})$ (as $m(\hat{A}_{\lambda})=m((T')^{-1}\hat{A}_{\lambda})$). We can now lift $A_{\lambda},B_{\lambda}$ to sets $A_{\lambda,k},B_{\lambda,k}$ so that $A_{\lambda,k},B_{\lambda,k} \subset k$. Note that

 \begin{equation} \label{Eq:DiscreteApproximations}
 \mu(\hat{A}_{\lambda,k} \triangle A_{\lambda,k})< \frac{\epsilon}{2  \abs{\Lambda}}, \text{ and } \mu(T^{-1}\hat{A}_{\lambda,k_2} \triangle B_{\lambda,k_1})< \frac{\epsilon}{2 \abs{\Lambda}}.
 \end{equation}
 
where $k_1, k_2$ are such that if $i,j$ are such that $\hat{A}_{\lambda,k_2} \subset D_i \cap TD_j,$ then $D_i \subset k_2, D_j \subset k_1$. 

We will now define $Q$ of some rank $r \in \mathbb{N}$ where $r$ is at least as large as the ranks of $D_i$ for every $i$, and $A_{\lambda},B_{\lambda}$ for every $\lambda$. We first define $Q'$ a dyadic permutation on $X$ as any dyadic permutation which maps $B_{\lambda}$ to $A_{\lambda}$ for every $\lambda$. Next we define $Q$, a column preserving permutation of rank $r$. First let $k_1,k_2$ be as before: if $i,j$ are such that $\hat{A}_{\lambda,k_2} \subset D_i \cap TD_j$, then $D_i \subset k_2, D_j \subset k_1$. Then $Q$ will be the extension of $Q'$ such that $B_{\lambda,k_1} \mapsto A_{\lambda,k_2}$. Note that for all $\lambda, k,$ we have that $\text{level } Q^{-1} A_{\lambda,k} = \text{level } T^{-1} \hat{A}_{\lambda,k},$ where $\text{level }A := k$ if and only if $A \subset k.$

We now show that $\mu(TD_j \triangle QD_j) < \epsilon$ for all $j$. Fix $j \in 1, \ldots, L(2^n),$ and define $k$ so that $D_j \subset k$. Let $\Lambda_j := \{\lambda \in \Lambda| (T')^{-1}\hat{A}_{\lambda} \subset \pi D_j \}$. For $\lambda \in \Lambda_j,$ let $i_{\lambda,j}$ be such that $T^{-1}\hat{A}_{\lambda, i_{\lambda,j}} \subset D_j$. Then $D_j = \bigcup_{\lambda \in \Lambda_j}{T^{-1}\hat{A}_{\lambda, i_{\lambda,j}}}.$

Further, by the definitions of $Q$ and $\Lambda_j,$ as well as the previous note, we have that $Q^{-1}A_{\lambda, i_{\lambda,j}} = B_{\lambda,k}.$ 

Note all unions and sums will be taken over $\lambda \in \Lambda_j.$ We have

 \begin{equation} \label{Eq:DiscreteFinal1}
 \mu\left(D_j \triangle \bigcup B_{\lambda,k}\right) = \mu \left(\bigcup T^{-1} \hat{A}_{\lambda,i_{\lambda,j}} \triangle \bigcup B_{\lambda,k}\right) \le \sum \mu(T^{-1}\hat{A}_{\lambda,i_{\lambda,j}} \triangle B_{\lambda,k}).
 \end{equation}
 
But by (\ref{Eq:DiscreteApproximations}), $\mu(T^{-1}\hat{A}_{\lambda,i_{\lambda,j}} \triangle B_{\lambda,k}) < \frac{\epsilon}{2 \abs{\Lambda}}$ so $(\ref{Eq:DiscreteFinal1}) < \sum \frac{\epsilon}{2 \abs{\Lambda}} = \frac{\epsilon}{2}.$ Therefore

 $$\mu\left(QD_j \triangle \bigcup A_{\lambda,i_{\lambda,j}}\right) = \mu\left(D_j \triangle \bigcup B_{\lambda,k}\right) \le \frac{\epsilon}{2}.$$

On the other hand, 

\begin{equation} \label{Eq:DiscreteFinal2}
\mu\left(\bigcup A_{\lambda,i_{\lambda,j}} \triangle TD_j\right) = \mu\left(\bigcup A_{\lambda,i_{\lambda,j}} \triangle \bigcup \hat{A}_{\lambda,i_{\lambda,j}}\right) \le \sum (A_{\lambda,i_{\lambda,j}} \triangle \hat{A}_{\lambda,i_{\lambda,j}}).
\end{equation}
 
 Again by (\ref{Eq:DiscreteApproximations}), $\,u(A_{\lambda,i_{\lambda,j}} \triangle \hat{A}_{\lambda,i_{\lambda,j}}) < \frac{\epsilon}{2 \abs{\Lambda}}$ so $ (\ref{Eq:DiscreteFinal2}) < \sum \frac{\epsilon}{2 \abs{\Lambda}} = \frac{\epsilon}{2}$. Finally,  $$\mu(TD_j \triangle QD_j) \le \mu\left(TD_j \triangle \bigcup A_{\lambda,i_{\lambda,j}} \right) + \left(\bigcup A_{\lambda,i_{\lambda,j}} \triangle  QD_j\right) \le \frac{\epsilon}{2} + \frac{\epsilon}{2} = \epsilon.$$ And because this holds for all $j,$ we have that $Q \in N_{\epsilon}(T).$ 
\end{proof}

\section{Weak Approximation Theorem for Extensions on the Unit Square} \label{Sec:WAT}
Now we let $(Z,m_2)$ be $X \times X$ with the Lebesgue measure. If we need further clarity, we will write the Lebesgue measure on $X$ as $m_1$, but in general we will denote both Lebesgue measures by $m$.

We begin by drawing some connections to Section \ref{Sec:Discrete}. First, however, we need some more notation. For $L \in \mathbb{N},$ define $Z_L:= \bigcup_{j=0}^{L-1}{\left(X \times \left \{\frac{j}{L}\right \}\right)} \subset Z, \mu_L,$ a measure on $Z_L,$ to be the product of the Lebesgue measure with a normalized counting measure on $L$ points. Further, $\pi_L: Z \to Z_L$ be the natural projection onto $Z_L.$ That is, if $z = \left(x, \frac{j}{L} + \gamma \right)$ for $\gamma \in \left[0, \frac{1}{L} \right),$ then $\pi_L (z) = \left(x, \frac{j}{L} \right).$ 

\begin{defin} \label{Def:DiscreteEquivalent}
Let $T \in \mathcal{G}(Z)$. We say that $T$ is \textit{discrete equivalent} if there exists $L$ and $T_L \in \mathcal{G}(Z_L)$, such that $(Z,m,T)$ is an extension of $(Z_L,\mu_L,T_L)$ through the factor map $\pi_L.$ Further, we say that $T$ is \textit{simply discrete equivalent} if $T$ is an identity extension. That is, if we write $Z$ as $Z_L \times \left[0, \frac{1}{L} \right),$ then $T=T_L \times I.$ If we wish to emphasize the number of levels, $L$, we will say $T$ is \textit{$L$-(simply) discrete equivalent}. 
\end{defin}

Definition \ref{Def:DiscreteEquivalent} is fairly easy to visualize. We take the square and divide it into $L$ equal measure horizontal pieces. Then $T$ is discrete equivalent if $T$ moves fibers on each small piece to other such fibers, and is simply discrete equivalent if it does not move any points within the fiber. Note that in general, a discrete equivalent $T$ need not be in $\mathcal{G}_X.$ However, if $T \in \mathcal{G}_X,$ then $T_L$ is also an extension of $T'.$

Our goal for this section is to provide a version of Halmos' Weak Approximation Theorem (see \cite[p.65]{HalmosLectures}) when restricted to $\mathcal{G}_X$. Mostly this will mean proving a result equivalent to Theorem \ref{Thm:DensityOfPermutationsFinite}. However we first need to lay some ground work. Definitions of dyadic squares, sets, and permutations are all standard in this case, so we do not redefine them. Column-preserving permutations are defined just as they are in Definition \ref{Def:DyadicFinite}. 

Before moving on, we make a few remarks.

\begin{remark}
Lemma \ref{Lem:DyadicSetsPartition} holds on $(Z,m),$ because $(Z,m),$ like $(X,m),$ is a non-atomic standard probability space, and Lemma \ref{Lem:DyadicSetsPartition} holds for all such spaces (replacing ``dyadic sets'' in the statement of Lemma \ref{Lem:DyadicSetsPartition} with a class $\mathcal{B}$ which is isomorphic to the class of dyadic sets). Alternatively, one can simply prove Lemma \ref{Lem:DyadicSetsPartition} again in the context of the square. No part of the proof relies on the the fact that we were on the unit interval, so nothing changes in the proof.
\end{remark}

\begin{remark}
We will make the following notational convenience. If $T \in \mathcal{G}(Z)$ and $S' \in \mathcal{G}(X),$ then we will write $S'T$ in place of $(S' \times I) T.$
\end{remark}

\begin{remark} \label{Remark:PermutationsDiscreteEquivalent}
If $Q \in \mathcal{G}(Z)$ is a dyadic permutation of rank $K$, then $Q$ is $L$-simply discrete equivalent with $L = 2^K.$ Further if $S' \in \mathcal{G}(X),$ then $S'Q$ is also $L$-simply discrete equivalent. If $Q$ is further an extension of $Q' \in \mathcal{G}(X),$ then $S'Q$ is an extension of $S'Q'.$
\end{remark}

The key to our goal is the following strengthening of Lemma \ref{Lem:DyadicSetsPartition}.

\begin{lem} \label{Lem:ColumnPartitions}
	Let $\{E_1,\ldots E_N \}$ be a finite partition of $Z$, $\epsilon > 0,$  and suppose $\{\tilde{F}_1,\ldots,\tilde{F}_N \}$ is another partition of $Z,$ where $\tilde{F}_i$ are all dyadic sets, and $m(E_i \triangle \tilde{F}_i) < \epsilon.$ Let $K := \text{\emph{max rank} } \tilde{F}_i$ and let $E_{ij} := E_i \cap \pi^{-1}C_j, j \in \{1, \ldots, 2^K \},$ where $C_j$ is a dyadic interval of rank $K$. Let $r_{ij}$ be dyadic rationals (possibly zero) such that $\sum_{i=1}^N r_{ij} = \frac{1}{2^K}$ for all $j$ and $\abs{m(E_{ij})-r_{ij}} < \frac{\epsilon}{2^{K}}$ for all $i,j.$ Then there exists $\{F_1, \ldots, F_N\}$ a partition of $Z$ such that $F_{i}$ is dyadic set for all $i, m(F_{ij}) = r_{ij}$ (with $F_{ij}$ similarly defined as $E_{ij}$) for all $i,j$, and $m(E_{i} \triangle {F_{i}}) < 3 \epsilon$ for all $i.$
\end{lem}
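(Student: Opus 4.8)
The plan is to prove this by applying the column-by-column structure: we want to adjust the given dyadic approximations $\tilde F_i$ into new dyadic sets $F_i$ that are \emph{exactly} right on each rank-$K$ column $C_j$, at the modest cost of tripling the error. First I would fix a column index $j$ and restrict attention to $\pi^{-1}C_j$, which (after rescaling measure by $2^K$) is itself a copy of $Z$, hence a non-atomic standard probability space carrying its own class of dyadic sets. Within this column, $\{E_{1j},\dots,E_{Nj}\}$ partitions $\pi^{-1}C_j$, and the hypotheses give us dyadic rationals $2^K r_{ij}$ summing to $1$ (after rescaling) with $\abs{2^K m(E_{ij}) - 2^K r_{ij}} < \epsilon$. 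This is exactly the setup of Lemma~\ref{Lem:DyadicSetsPartition} (in the version for general non-atomic standard probability spaces, valid by the first Remark above): it produces dyadic subsets $F_{ij} \subset \pi^{-1}C_j$ partitioning the column with $m$-measure (rescaled) equal to $2^K r_{ij}$, i.e.\ $m(F_{ij}) = r_{ij}$, and with $2^K m(E_{ij} \triangle F_{ij}) < 2\epsilon$, so $m(E_{ij} \triangle F_{ij}) < \tfrac{2\epsilon}{2^K}$.

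Next I would assemble $F_i := \bigcup_{j=1}^{2^K} F_{ij}$. Since the $F_{ij}$ lie in disjoint columns and each is dyadic of rank $\ge K$, the union $F_i$ is a dyadic set, and the $\{F_i\}$ partition $Z$ because each column is partitioned by $\{F_{ij}\}_i$. The constraint $m(F_{ij}) = r_{ij}$ holds by construction. It remains to bound $m(E_i \triangle F_i)$. Here I would write $E_i \triangle F_i = \bigcup_j (E_{ij} \triangle F_{ij})$, a disjoint union over columns, so
$$
m(E_i \triangle F_i) = \sum_{j=1}^{2^K} m(E_{ij} \triangle F_{ij}) < \sum_{j=1}^{2^K} \frac{2\epsilon}{2^K} = 2\epsilon < 3\epsilon,
$$
which is even slightly better than claimed. (The slack of $3\epsilon$ versus $2\epsilon$ presumably absorbs a variant where one first has to replace $\tilde F_i$ by something respecting the columns before applying the lemma, or where the per-column error bound picks up an extra $\epsilon$ from relating $m(E_{ij})$ back to $m(E_i \triangle \tilde F_i)$; in any case $2\epsilon \le 3\epsilon$ suffices.)

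The main obstacle, and the only genuinely delicate point, is verifying that Lemma~\ref{Lem:DyadicSetsPartition} really does apply inside a single column: one must check that $\pi^{-1}C_j$ with normalized Lebesgue measure is isomorphic (as a measure space equipped with a distinguished countable dense algebra) to the unit interval with its dyadic algebra, in a way that sends dyadic subsets of $Z$ contained in $\pi^{-1}C_j$ to dyadic subsets of $[0,1]$. Since $C_j$ is dyadic of rank $K$, the map doubling coordinates appropriately does this, and the first Remark in this section already licenses using Lemma~\ref{Lem:DyadicSetsPartition} on any non-atomic standard probability space with its analogue of the dyadic class; one should just note that a dyadic subset of $Z$ sitting inside $\pi^{-1}C_j$ is precisely a dyadic set in the rescaled column, and conversely. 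The hypothesis that $\tilde F_i$ are dyadic with $K = \text{max rank}$ is what guarantees the $E_{ij}$ are honest pieces to which the lemma's hypotheses ($\sum_i r_{ij} = 2^{-K}$, closeness of $m(E_{ij})$ to $r_{ij}$) can be matched; one does not actually need $\tilde F_i$ beyond defining $K$, though keeping $K = \text{max rank }\tilde F_i$ ensures the $C_j$ are fine enough that no $\tilde F_i$ straddles a column boundary, which is what makes the column decomposition compatible with dyadic structure. Everything else is bookkeeping.
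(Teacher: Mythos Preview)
Your argument is correct, and it takes a genuinely different route from the paper. The paper does \emph{not} invoke Lemma~\ref{Lem:DyadicSetsPartition} columnwise. Instead it works directly with the given dyadic partition $\{\tilde F_i\}$: it sets $\tilde F_{ij} := \tilde F_i \cap \pi^{-1}C_j$ (which, since $K$ is the maximal rank, is a product $C_j \times (\text{dyadic set})$), and then for each $i,j$ either removes or adds a product slice $A_{ij} = C_j \times B_{ij}$ so as to force $m(F_{ij}) = r_{ij}$. The total mass moved for fixed $i$ is $\sum_j m(A_{ij}) = \sum_j |m(\tilde F_{ij}) - r_{ij}|$, which the paper bounds by $\sum_j |m(E_{ij}) - r_{ij}| + \sum_j |m(E_{ij}) - m(\tilde F_{ij})| < \epsilon + m(E_i \triangle \tilde F_i) < 2\epsilon$; one more triangle inequality against $m(E_i \triangle \tilde F_i) < \epsilon$ gives the stated $3\epsilon$.

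So the paper actually \emph{uses} the hypothesis $m(E_i \triangle \tilde F_i) < \epsilon$ twice, whereas your argument (as you noticed) needs $\tilde F_i$ only to define $K$, and yields the sharper bound $2\epsilon$. What the paper's approach buys is that the resulting $F_{ij}$ are visibly of the special form $C_j \times (\text{dyadic set in } [0,1])$, i.e.\ full horizontal strips across the column; this concrete product structure is convenient when the lemma is applied in Theorem~\ref{Thm:DensityOfPermutations}. Your approach is cleaner and more conceptual, but you should be aware that Lemma~\ref{Lem:DyadicSetsPartition} applied inside a rescaled column will in general produce $F_{ij}$ that are merely dyadic subsets of $\pi^{-1}C_j$, not horizontal strips, so if a downstream argument needed that product form you would have to say a word more.
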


\begin{proof}
	Similar to the definition of $E_{ij},$ define $\tilde{F}_{ij} := \tilde{F}_i \cap \pi^{-1}C_j.$ Note that by choice of $K, \tilde{F}_{ij}$ is of the product of $C_j$ and a dyadic set for all $i,j$. Now, for all $\tilde{F}_{ij}$ with $m(\tilde{F}_{ij}) > r_{ij},$ let $A_{ij} \subset \tilde{F}_{ij}$ of the form $A_{ij} = C_j \times B_{ij}$ with $B_{ij}$ a dyadic set, and 	$m(A_{ij}) = m(\tilde{F}_{ij}) - r_{ij}.$ Define $F_{ij} := \tilde{F}_{ij} \backslash A_{ij}.$ Let $A$ be the union of all $A_{ij}$ chosen up to this point. Now for $\tilde{F}_{ij}$ with $m(\tilde{F}_{ij}) < r_{ij},$ let $A_{ij} \subset A$ of the same form as above, this time with $m(A_{ij}) = r_{ij} - m(\tilde{F}_{ij}).$ In this case, define $F_{ij} := \tilde{F}_{ij} \cup A_{ij}.$ Now let $F_i := \bigcup_{j=1}^{2^K} F_{ij}.$ Note that some $F_{ij}$ may be empty. In particular, $F_{ij} = \emptyset$ if and only if  $r_{ij}=0$. 
	
	Note that by definition, $m(F_{ij}) = r_{ij}$ and note further that $\tilde{F}_{ij} \triangle F_{ij} = A_{ij}.$ We claim 
	
	$$\sum_{j=1}^{2^K} m(A_{ij}) < 2 \epsilon$$ 
	
	for all $i$. Let $i$ be fixed, and consider 
	
	$$\sum_j m(A_{ij}) = \sum_j \abs{m(\tilde{F}_{ij}) - r_{ij}} \le \sum_j \abs{m(E_{ij}) - r_{ij}} + \sum_j \abs{m(E_{ij}) - m(\tilde{F}_{ij})}.$$ 
	
	We have $\abs{m(E_{ij})-r_{ij}} < \frac{\epsilon}{2^K}$ so $\sum_j \abs{m(E_{ij}) - r_{ij}} < \epsilon.$ On the other hand, 
	
	$$\sum_j \abs{m(E_{ij}) - m(\tilde{F}_{ij})} \le \sum_j m(E_{ij} \triangle \tilde{F}_{ij}) = m\left( \bigcup_j (E_{ij} \triangle \tilde{F}_{ij})\right).$$
	
	 Now, because $m(E_{ij_1} \cap \tilde{F}_{ij_2}) = 0$ if $j_1 \neq j_2,$ we have that 
	 
	 $$m\left( \bigcup_j (E_{ij} \triangle \tilde{F}_{ij})\right) = m \left(\bigcup_j E_{ij} \triangle \bigcup_j \tilde{F}_{ij} \right) = m(E_i \triangle \tilde{F}_i) < \epsilon.$$
	 
	  Therefore, $\sum_j m(A_{ij}) < 2 \epsilon.$ 
	
	We will now show $m(E_i \triangle F_i) < 3 \epsilon.$ Firstly, we have $m(E_i \triangle F_i) \le m(E_i \triangle \tilde{F}_i) + m(\tilde{F}_i \triangle F_i).$ But $m(E_i \triangle \tilde{F}_i) < \epsilon.$ Further, 
	
	$$m(\tilde{F}_i \triangle F_i) = m\left( \bigcup_j \tilde{F}_{ij} \triangle \bigcup_j F_{ij} \right) \le m\left( \bigcup_j (\tilde{F}_{ij} \triangle F_{ij}) \right) = \sum_j m(\tilde{F}_{ij} \triangle F_{ij}).$$
	
	 But as previously noted, $\tilde{F}_{ij} \triangle F_{ij} = A_{ij},$ and we already showed $\sum_j m(A_{ij}) < 2 \epsilon$. Thus, $m(E_i \triangle F_i) < 3 \epsilon$ as desired.
\end{proof}

With Lemma \ref{Lem:ColumnPartitions}, we can now prove the equivalent version of Theorem \ref{Thm:DensityOfPermutationsFinite} for the unit square, which will be the core result for proving our version of the Weak Approximation Theorem.

\begin{thm}[Density of column-preserving permutations] \label{Thm:DensityOfPermutations}
Column-preserving permutations are dense in $\mathcal{G}_X.$ More precisely, let $T \in \mathcal{G}_X.$  Given $N_{\epsilon}(T)$, a dyadic neighborhood of $T$, there exists $Q \in N_{\epsilon}(T)$, a column-preserving permutation.
\end{thm}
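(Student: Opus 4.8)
## Proof proposal

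The plan is to mimic the structure of the proof of Theorem \ref{Thm:DensityOfPermutationsFinite}, but working on the square rather than on finitely many levels, and to use Lemma \ref{Lem:ColumnPartitions} in place of the ordinary Lemma \ref{Lem:DyadicSetsPartition} wherever we need to choose dyadic approximations that respect the column structure. As in that proof, without loss of generality I would take the neighborhood $N_\epsilon(T)$ to be specified by the condition $\mu(TD_l \triangle QD_l) < \epsilon$ where $\{D_l\}$ ranges over all dyadic squares of some fixed rank $n$. First I would form, for each column (dyadic interval $C$ of rank $n$), the partition of $\pi^{-1}(C)$ induced by the sets $D_i \cap TD_j$ lying in that column, project these down to $X$, take a common refinement over all columns, and lift back to obtain a partition $\{\hat A_{\lambda,k}\}$ of $Z$ — exactly the analogue of the partition $P$ in the finite case, except now ``level $k$'' is replaced by ``column $C_k$''. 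The point of refining over all columns simultaneously is that the natural projection of $\hat A_{\lambda}$ and of $(T')^{-1}\hat A_\lambda$ are each unions of sets from a fixed dyadic-indexed family, which is what lets the column-preserving permutation be well-defined.

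Next I would apply Lemma \ref{Lem:ColumnPartitions} twice. The first application replaces $\{\hat A_\lambda\}$ (viewed as a partition of $Z$, with the column decomposition coming from the rank-$n$ dyadic intervals) by a dyadic partition $\{A_\lambda\}$ with $m(\hat A_\lambda \triangle A_\lambda)$ small and with the column-by-column measures $m(A_{\lambda j})$ equal to prescribed dyadic rationals $r_{\lambda j}$. The second application does the same for $\{(T')^{-1}\hat A_\lambda\}$, producing $\{B_\lambda\}$; the crucial use of the full strength of the lemma is to choose these prescribed dyadic rationals so that $m(A_{\lambda j}) = m(B_{\lambda j})$ for every $\lambda$ and every column $j$ — this is possible because $(T')\times I$ preserves each column setwise, so $\hat A_\lambda$ and $(T')^{-1}\hat A_\lambda$ have the same measure in each column. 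Having matched the within-column measures, I can define a column-preserving dyadic permutation $Q$ of sufficiently large rank $r$: choose $Q'$ a dyadic permutation on $X$ sending $B_\lambda$ to $A_\lambda$, and lift it to $Q$ on $Z$ sending $B_{\lambda,k}$ to $A_{\lambda,k'}$, where the column correspondence $k \mapsto k'$ is read off from how $T$ maps columns (i.e. from which $D_i, D_j$ the piece $\hat A_{\lambda}$ sits inside), precisely as in the finite-level argument.

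Finally I would verify $\mu(TD_j \triangle QD_j) < \epsilon$ by the same two-step triangle-inequality estimate as in Theorem \ref{Thm:DensityOfPermutationsFinite}: write $D_j$ as a union over the relevant $\lambda$ of $T^{-1}\hat A_{\lambda, i_{\lambda,j}}$, compare this to $\bigcup B_{\lambda,k}$ using the $T^{-1}\hat A \triangle B$ estimates from the second application of Lemma \ref{Lem:ColumnPartitions}, note $Q$ carries $\bigcup B_{\lambda,k}$ onto $\bigcup A_{\lambda,i_{\lambda,j}}$, and then compare $\bigcup A_{\lambda,i_{\lambda,j}}$ to $TD_j = \bigcup \hat A_{\lambda,i_{\lambda,j}}$ using the $A \triangle \hat A$ estimates from the first application; each piece contributes at most $\epsilon/2$.

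The main obstacle is bookkeeping rather than any deep idea: I need to set up the column indexing so that Lemma \ref{Lem:ColumnPartitions} applies cleanly (its hypothesis wants the approximating partition to already be dyadic and demands control of the per-column discrepancies $|m(E_{ij}) - r_{ij}|$), and I must make sure the rank $K$ used when invoking Lemma \ref{Lem:ColumnPartitions} dominates both rank $n$ and the ranks of the intermediate dyadic partitions, so that the column structure of all the sets involved is consistent. The one genuinely new ingredient compared to the finite case is that ``levels'' are now a continuum, but since the neighborhood only constrains finitely many dyadic squares and $(T')\times I$ respects the rank-$n$ columns, everything relevant happens at a fixed finite dyadic scale and the finite-level combinatorics goes through essentially verbatim.
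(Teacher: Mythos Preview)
Your argument contains a genuine gap at the measure-matching step. You claim that $(T')\times I$ preserves each column $\pi^{-1}(C_j)=C_j\times[0,1]$ setwise, and use this to conclude that $\hat A_\lambda$ and its preimage have the same measure in every column, so that the dyadic rationals $r_{\lambda j}$ in the two applications of Lemma~\ref{Lem:ColumnPartitions} can be chosen equal. But $(T'\times I)(C_j\times[0,1])=T'(C_j)\times[0,1]$, and for a general $T'\in\mathcal G(X)$ there is no reason for $T'(C_j)=C_j$. What \emph{is} true is that $T'\times I$ preserves horizontal strips $X\times C$; but Lemma~\ref{Lem:ColumnPartitions} controls per-column (vertical-strip) measures, and $T$ itself need not respect horizontal strips either. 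So the equality $m(A_{\lambda j})=m(B_{\lambda j})$ cannot be arranged as you describe, and without it you cannot define a column-preserving dyadic permutation $Q$ carrying $B_{\lambda,k}$ onto $A_{\lambda,k'}$.

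Your outline is, however, exactly the right argument in the special case $T'=I_X$: there $T$ genuinely preserves each column $\pi^{-1}(C_k)$, so $m(T^{-1}D_{ij}\cap\pi^{-1}C_k)=m(D_{ij}\cap\pi^{-1}C_k)$ and the two applications of Lemma~\ref{Lem:ColumnPartitions} can share the same $r_{ijk}$. This is precisely how the paper proceeds. For general $T'$ the paper does not attempt a direct construction; instead it reduces to the identity case by setting $\tilde T:=(T')^{-1}T$, which extends $I_X$, approximating $\tilde T$ by a column-preserving permutation $\tilde Q$ via the identity case, observing that $T'\tilde Q$ is then close to $T$ and is $L$-simply discrete equivalent (Remark~\ref{Remark:PermutationsDiscreteEquivalent}), and finally invoking Theorem~\ref{Thm:DensityOfPermutationsFinite} on the discrete factor to replace $T'\tilde Q$ by a genuine column-preserving dyadic permutation. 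The missing idea in your proposal is this two-step reduction; the direct finite-level template does not survive the passage to a non-identity factor.
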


\begin{proof}
We may assume without loss of generality that 

$$N_{\epsilon}(T) = \{S: \mu(TD_i \triangle SD_i)<\epsilon, l = 1, \ldots, (2^{2N}) \},$$

where $D_i$ are dyadic squares of some fixed rank $N$. We start with the case where $T'=I_X.$

Let $D_{ij} := D_i \cap TD_j.$ Note that $\{D_{ij}\}$ partitions $Z$. By Lemma \ref{Lem:DyadicSetsPartition}, there exists a partition of $Z$ into dyadic sets, $\{\tilde{E}_{ij}\},$ such that $m(D_{ij} \triangle \tilde{E}_{ij}) < \frac{\epsilon}{6M},$ where $M := 2^{2N}.$ Further by Lemma \ref{Lem:DyadicSetsPartition}, we can find a dyadic partition of $Z$ into sets $\{\tilde{F}_{ij}\}$ where $m(T^{-1}D_{ij} \triangle \tilde{F}_{ij}) < \frac{\epsilon}{6M}.$ Note that because $m(D_{ij}) = m(T^{-1}D_{ij}),$ we can assume that $m(\tilde{E}_{ij}) = m(\tilde{F}_{ij})$ Let $K = \text{max rank} \{\tilde{E}_{ij}, \tilde{F}_{ij}\}.$ We can now apply Lemma \ref{Lem:ColumnPartitions} to both $\tilde{E}_{ij}$ and $\tilde{F}_{ij}$ to get dyadic partitions $\{E_{ij}\}$ and $\{F_{ij}\}$ such that 

\begin{equation} \label{Eq:SquareApproximations}
m(D_{ij} \triangle E_{ij}) < \frac{\epsilon}{2M} \text{ and } m(T^{-1}D_{ij} \triangle F_{ij}) < \frac{\epsilon}{2M}.
\end{equation}

Recall that if $C_k$ is a dyadic interval of rank $K$, then in the notation of Lemma \ref{Lem:ColumnPartitions}, $E_{ijk} := E_{ij} \cap \pi^{-1}C_k$ and $F_{ijk} := F_{ij} \cap \pi^{-1}C_k.$ Note that not only do we have $m(D_{ij}) = m(T^{-1}D_{ij}),$ but because $T$ is an extension of identity, $m(T^{-1}D_{ij} \cap \pi^{-1}C_k) = m(T^{-1}(D_{ij} \cap \pi^{-1}C_k)) = m(D_{ij} \cap \pi^{-1}C_k).$ Thus we are able to choose the same dyadic rationals in both applications of Lemma \ref{Lem:ColumnPartitions}, and subsequently have that $m(E_{ijk}) = m(F_{ijk})$ for $i,j = 1, \ldots 2^{2N}, k = 1, \ldots, 2^K.$

We now define $Q$ as the permutation which maps $F_{ijk}$ to $E_{ijk}.$ Note that in particular, $Q$ will map $F_{ij}$ to $E_{ij}.$ Further note that $Q$ will be an extension of the identity. 

Let $j$ be fixed. We will now show $m(QD_j \triangle TD_j) < \epsilon$. Recall $D_{ij}= D_i \cap TD_j,$ so $T^{-1}D_{ij} = T^{-1}D_i \cap D_j$ and $D_j = \bigcup_i T^{-1}D_{ij}.$ We have

\begin{equation} \label{Eq:SquareFinal1}
 m\left(D_j \triangle \bigcup_i F_{ij} \right) = m \left( \bigcup_i T^{-1} D_{ij} \triangle \bigcup_i F_{ij} \right) \le \sum_i m(T^{-1} D_{ij} \triangle F_{ij}).
\end{equation}

But per (\ref{Eq:SquareApproximations}), $m(T^{-1}D_{ij} \triangle F_{ij}) < \frac{\epsilon}{2M},$ so $(\ref{Eq:SquareFinal1}) < \sum_i \frac{\epsilon}{2M} \le \frac{\epsilon}{2}$. Therefore

$$
m\left(QD_j \triangle \bigcup_i E_{ij} \right) = m \left(D_j \triangle \bigcup_i F_{ij} \right) < \frac{\epsilon}{2}.
$$

On the other hand, 

\begin{equation} \label{Eq:SquareFinal2}
m\left( TD_j \triangle \bigcup_i E_{ij}  \right) = m \left( \bigcup_i D_{ij} \triangle \bigcup_i E_{ij} \right) \le \sum_i m(D_{ij} \triangle E_{ij}).
\end{equation}

Again, per (\ref{Eq:SquareApproximations}), $m(D_{ij} \triangle E_{ij}) < \frac{\epsilon}{2M},$ so $(\ref{Eq:SquareFinal2}) < \sum_i \frac{\epsilon}{2M} = \frac{\epsilon}{2}.$ Therefore,

$$
m(TD_j \triangle QD_j) \le m\left( TD_j \triangle \bigcup_i E_{ij} \right) + m\left(\bigcup_i E_{ij} \triangle QD_j \right) < \frac{\epsilon}{2} + \frac{\epsilon}{2} = \epsilon.
$$

As this holds for all $j,$ we have that $Q \in N_{\epsilon}(T).$ 

Now suppose $T$ is an extension of some invertible $T'.$ Define $\tilde{T}:= (T')^{-1}T.$ Then $\tilde{T}$ is an extension of the identity, so there exists a column-preserving permutation $\tilde{Q} \in N_{\epsilon/2}(\tilde{T}).$ But then $T'\tilde{Q} \in N_{\epsilon/2}(T)$ as $m(T'\tilde{Q}D_i \triangle TD_i) = m(\tilde{Q}D_i \triangle \tilde{T}D_i) < \frac{\epsilon}{2}.$ By Remark \ref{Remark:PermutationsDiscreteEquivalent}, $T'\tilde{Q}$ is $L$-simply discrete equivalent, with $L = 2^{\text{rank } \tilde{Q}}$. If we let $G_i:= \pi_L D_i$ and let $\tilde{N}_{\epsilon/2}(\pi_L(T'\tilde{Q})) := \{S_L: \mu_L(\pi_L(T'\tilde{Q})G_i \triangle S_LG_i) < \frac{\epsilon}{2} \forall i \},$ then by Theorem \ref{Thm:DensityOfPermutationsFinite} there exists a column-preserving dyadic permutation $\hat{Q} \in \tilde{N}_{\epsilon/2}(\pi_L(T'\tilde{Q})).$ Now we define $Q$ to be the simply discrete equivalent extension of $\hat{Q}.$ Note that because $L$ was dyadic, $Q$ is a (column-preserving) dyadic permutation. Further, $Q \in N_{\epsilon/2}(T'\tilde{Q})$ as $m(QD_i \triangle T'\tilde{Q}D_i) = \mu_L(\hat{Q}G_i \triangle \pi_L(T'\tilde{Q})G_i) < \frac{\epsilon}{2}.$ So 

$$m(QD_i \triangle TD_i) \le m(QD_i \triangle T'\tilde{Q}D_i) + m(T'\tilde{Q}D_i \triangle TD_i) < \epsilon$$

for all $i$, and thus $Q \in N_{\epsilon}(T).$ 
\end{proof}

We close this section with the promised version of Halmos' Weak Approximation Theorem for extensions.

\begin{thm}[Weak Approximation Theorem for Extensions] \label{Thm:WATE}
	Let $T\in \mathcal{G}_X$, and let $N_{\epsilon}(T)$ be a dyadic neighborhood of $T.$ Then for any $k_0 \in \mathbb{N},$ there exists $k \ge k_0$ and $Q \in \mathcal{G}_X$such that the following hold:
	
	\begin{itemize}
		\item $Q,Q'$ are dyadic permutations of rank $k$ on $Z,X$ respectively,
		\item $Q'$ is cyclic,
		\item $Q$ is periodic with period $2^k$ everywhere, 
		\item $Q \in N_{\epsilon}(T).$
	\end{itemize} 
\end{thm}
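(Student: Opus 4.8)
The plan is to reduce, via Theorem \ref{Thm:DensityOfPermutations}, to a column-preserving dyadic permutation $P$, and then to surgically turn $P$ into one that is cyclic on the base and of constant period $2^k$ while disturbing only a negligible amount of mass in the weak topology. Concretely, take $N_\epsilon(T)$ in the usual form, with the sets being the dyadic squares of some rank $N$, and apply Theorem \ref{Thm:DensityOfPermutations} to obtain a column-preserving dyadic permutation $P$, of some rank $r \ge N$, with $\mu(TD_i \triangle PD_i) < \epsilon/2$ for all $i$. Let $P'$ be its factor (also of rank $r$) and set $\tilde P := (P')^{-1}P$; then $P = P'\tilde P$, and $\tilde P$ is a rank-$r$ column-preserving extension of $I_X$, acting over each column by a vertical dyadic permutation.

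The main step is to construct a cyclic base permutation $Q'$ of rank $k := r+p$, with $p$ large (so $k \ge k_0$), that is weakly close to $P'$. Viewing $P'$ at rank $k$, a length-$\ell$ cycle $I^{(0)}\to\cdots\to I^{(\ell-1)}\to I^{(0)}$ of $P'$ on rank-$r$ intervals breaks into $2^p$ parallel cycles on the rank-$k$ subintervals $I^{(m)}_0,\dots,I^{(m)}_{2^p-1}$. I merge these into a single cycle of length $\ell 2^p$ by redirecting $I^{(\ell-1)}_a \mapsto I^{(0)}_{a+1\bmod 2^p}$ and leaving all other images alone; crucially this leaves the action on every rank-$r$ interval unchanged as a set, so the resulting $Q_1'$ agrees with $P'$ \emph{exactly} on every dyadic set of rank $\le r$. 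Performing this on all cycles of $P'$ leaves $Q_1'$ with $s\le 2^r$ cycles, which I then merge into one $2^k$-cycle $Q'$ by composing with $s-1$ transpositions of rank-$k$ intervals; each such transposition changes the map on only two rank-$k$ intervals, so $d'(Q',Q_1') \le 2(s-1)2^{-k} < 2^{1-p}$, and hence $\mu(Q'F \triangle P'F) < 2^{1-p}$ for every dyadic set $F$ of rank $\le r$.

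For the lift, put $Q_0 := Q'\tilde P$, a rank-$k$ column-preserving extension of $Q'$. For a rank-$N$ square $D$, both $D$ and $\tilde P D$ are unions of rank-$r$ squares, so each $y$-slice $(\tilde P D)_y$ is a rank-$\le r$ set; since $Q'$ and $P'$ act only on the first coordinate, Fubini gives $\mu(Q_0 D \triangle PD) = \mu\big(Q'(\tilde P D)\triangle P'(\tilde P D)\big) = \int_X m\big(Q'((\tilde P D)_y)\triangle P'((\tilde P D)_y)\big)\,dy < 2^{1-p}$. To obtain constant period, let $b_0\to\cdots\to b_{2^k-1}\to b_0$ be the base cycle of $Q'$ and let $\tau_{b_m}\in S_{2^k}$ be the vertical permutation $Q_0$ performs over column $b_m$; then every $Q_0$-orbit has size $2^k$ iff $\tau_{b_{2^k-1}}\circ\cdots\circ\tau_{b_0} = \mathrm{id}$, so I replace $\tau_{b_{2^k-1}}$ by $(\tau_{b_{2^k-2}}\circ\cdots\circ\tau_{b_0})^{-1}$ over the single column $b_{2^k-1}$; by the conjugacy $\rho_{b_{m+1}} = \tau_{b_m}\rho_{b_m}\tau_{b_m}^{-1}$ of the return maps, all $\rho_{b_m}$ then become trivial, so every orbit of the resulting $Q$ has size exactly $2^k$. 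This last change affects only one column, so $\mu(QD \triangle Q_0 D) \le 2^{-k}$, whence $\mu(QD_i \triangle TD_i) < \epsilon/2 + 2^{1-p} + 2^{-k} < \epsilon$ once $p$ is large enough; and $Q,Q'$ are dyadic permutations of rank $k$, $Q'$ is cyclic, $Q$ has period $2^k$ everywhere, and $Q\in N_\epsilon(T)\cap\mathcal{G}_X$.

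The \emph{main obstacle} is precisely what this construction sidesteps: the permutation supplied by Theorem \ref{Thm:DensityOfPermutations} can have rank $r$ far larger than $N$ and as many as $2^r$ cycles, so one cannot make it cyclic by surgeries controlled only in $d'$ at its own scale. The point is to do almost all of the cycle-merging by the "shift" moves, which are invisible at scale $2^{-r}$ and hence cost nothing against the rank-$N$ sets defining the neighborhood, leaving only $O(2^r)$ honest transpositions whose total $d'$-cost is $O(2^{-p})$; after that, handling the vertical part of $P$ via the factorization $P=P'\tilde P$ and fixing the period on a single column is routine bookkeeping.
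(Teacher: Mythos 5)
Your proof is correct. It shares its foundation with the paper's argument --- both reduce to a column-preserving dyadic permutation $P$ via Theorem \ref{Thm:DensityOfPermutations}, and both exploit the same key idea: perform the cycle-merging at a rank $k$ far finer than the rank $r$ of $P$, so that the ``shift'' moves within a single $P'$-cycle leave the image of every rank-$\le r$ dyadic set unchanged, and only the $O(2^r)$ cross-cycle splices carry a $d'$-cost, of total order $2^{r-k}$. The execution, however, is genuinely different. The paper constructs $Q$ on $Z$ in one pass by an explicit routing of $k$-squares that snakes through the $P'$-cycles one sub-column and one row at a time; the period-$2^k$ property is then automatic because each orbit visits every $k$-column exactly once. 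You instead factor $P = P'\tilde P$ with $\tilde P$ an identity extension, build the cyclic base $Q'$ first, lift by composing with $\tilde P$ (transferring the base estimate to $Z$ by Fubini on horizontal slices), and then enforce the period separately by trivializing the vertical holonomy $\tau_{b_{2^k-1}}\circ\cdots\circ\tau_{b_0}$ over a single column at cost $2^{-k}$. That last correction is a step the paper never needs but your setup does: after lifting, $Q_0$ need not have constant period, and you identify the obstruction (the return map to a column) exactly and remove it correctly, including the conjugacy observation that killing one return map kills them all. The net effect is a more modular proof with a cleaner error budget ($\epsilon/2 + 2^{1-p} + 2^{-k}$), at the price of an extra stage; the paper's monolithic routing is harder to follow but self-certifying on the period.
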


\begin{proof}
Because Theorem \ref{Thm:DensityOfPermutations} tells us that $N_{\epsilon/2}(T)$ will contain $P \in \mathcal{G}_X,$ ($P$ a column-preserving permutation) we need only prove the case where $T$ is a permutation itself (we will therefore proceed using $P, P'$ in place of $T,T'$). 

Fix $k_0 \in \mathbb{N}.$ Because $P$ is a permutation and $D_i$ is a dyadic set, $PD_i$ is also a dyadic set. Let $M$ be the maximum rank of $PD_i$ (so that $P$ is a permutation of rank $M$), $K$ be the number of disjoint cycles in $P',$ and $k$ be chosen to be greater than both $M$ and $k_0,$ and such that $\frac{K}{2^{k-1}} < \epsilon$. 

We will now construct $Q$ of rank $k.$ Note that following the proof there will be an example of this construction. To start, let $E_1$ be any dyadic square of rank $M$. If $\pi E_1$ is not a fixed point of $P',$ we have $Q$ map the ``first'' rank $k$ dyadic square (which we will henceforth refer to as a $k$-square) of $E_1$ to the ``first'' $k$-square in $PE_1.$ By ``first'' $k$-square, we mean the top left $k$-square. Now, if $(P')^2 \pi E_1 \neq \pi E_1,$ we continue to map to the first $k$-square in $(P')^2 \pi E_1.$ Eventually, however, we reach a point where $(P')^l \pi E_1 = \pi E_1.$ From where we are in $(P)^{l-1} E_1,$ we continue to map to the ``second'' $k$-square in $P^l E_1$ (by ``second'' we mean the one to the right of the first). Note that $P^l E_1 \neq E_1$ in general. 

We now repeat the entire process, replacing ``first'' for ``second,'' eventually ``third'' and so on, as well as replacing $E_1$ with $P^l E_1$. Eventually we will arrive at a $k$-square whose projection is at the far right of $(P')^{l-1} \pi E_1.$ At this point, we choose an $M$-square $E_2$ such that $\pi E_2$ is not in the $P'$ cycle of $\pi E_1$ (assuming such an $E_2$ exists). Then from our current position, we map to the first $k$-square of $E_2,$ and repeat the process. 

We continue on like this until we have exhausted every $P'$ cycle (including fixed points), at which point we return to the the first $k$-square of $E_1.$ Note that we have visited every $k$-column exactly once. We are not quite done yet, though. We now choose a $k$-square on the same column as the first $k$-square in $E_1,$ and we repeat the entire process. Now shifting to rows within the $M$-squares that correspond to our new choice of starting point. That is, in the original process, we were in the top row of every $M$-square, because our original $k$-square was in the top row. If our new $k$-square is in the 3rd row within its $M$-square, say, all our choices will be in the 3rd row of the respective $M$-squares. Repeating this process, we eventually define $Q$ for all $k$-squares.

We now find a bound for $m(PD_i \triangle QD_i).$ Note that by our construction the only points that can be in $PD_i \triangle QD_i$ come from $k$-squares in $D_i$ whose projections are in the last $k$-interval in each $P'$ cycle. Let $E_j$ be such a $k$-square. Then $m(PE_j \triangle QE_j) \le \frac{2}{2^{2k}} = \frac{1}{2^{2k-1}}.$ There are $2^k$ such $E_j$ per $k$-column, and there are $K$ such $k$-columns.  Thus, $m(PD_i \triangle QD_i) \le  \bigcup_j m(PE_j \triangle QE_j) \le \frac{K 2^k}{2^{2k-1}} = \frac{K}{2^{k-1}} < \epsilon$.  
\end{proof}

The construction in the proof of Theorem \ref{Thm:WATE} can be difficult to follow closely, so we provide an example of the construction. We first provide a $P$ which, in this case, will be of rank $2$. See Figure \ref{Fig:4x4} for reference on how we label the $2$-squares. Note that we will define $P$ using cycle decomposition notation. That is, if we write $R=(1 \ 2  \ 3),$ then we mean that the image under $R$ of the square labeled $1$ is the square labeled $2$. Similarly the image of ``$2$'' is ``$3$'' and the image of ``$3$'' is ``$1$''. Any squares not written explicitly in the decomposition are fixed points.  Now, we let $P := (1 \ 11 \ 5 \ 3)(13 \ 15)(9 \ 7)(2 \ 6 \ 14)(4 \ 16 \ 12 \ 8).$ Note that $P$ extends $P' := (1 \ 3)$ on $X$.

\begin{figure}[h] 
\caption{}\label{Fig:4x4}
\includegraphics[scale=0.4]{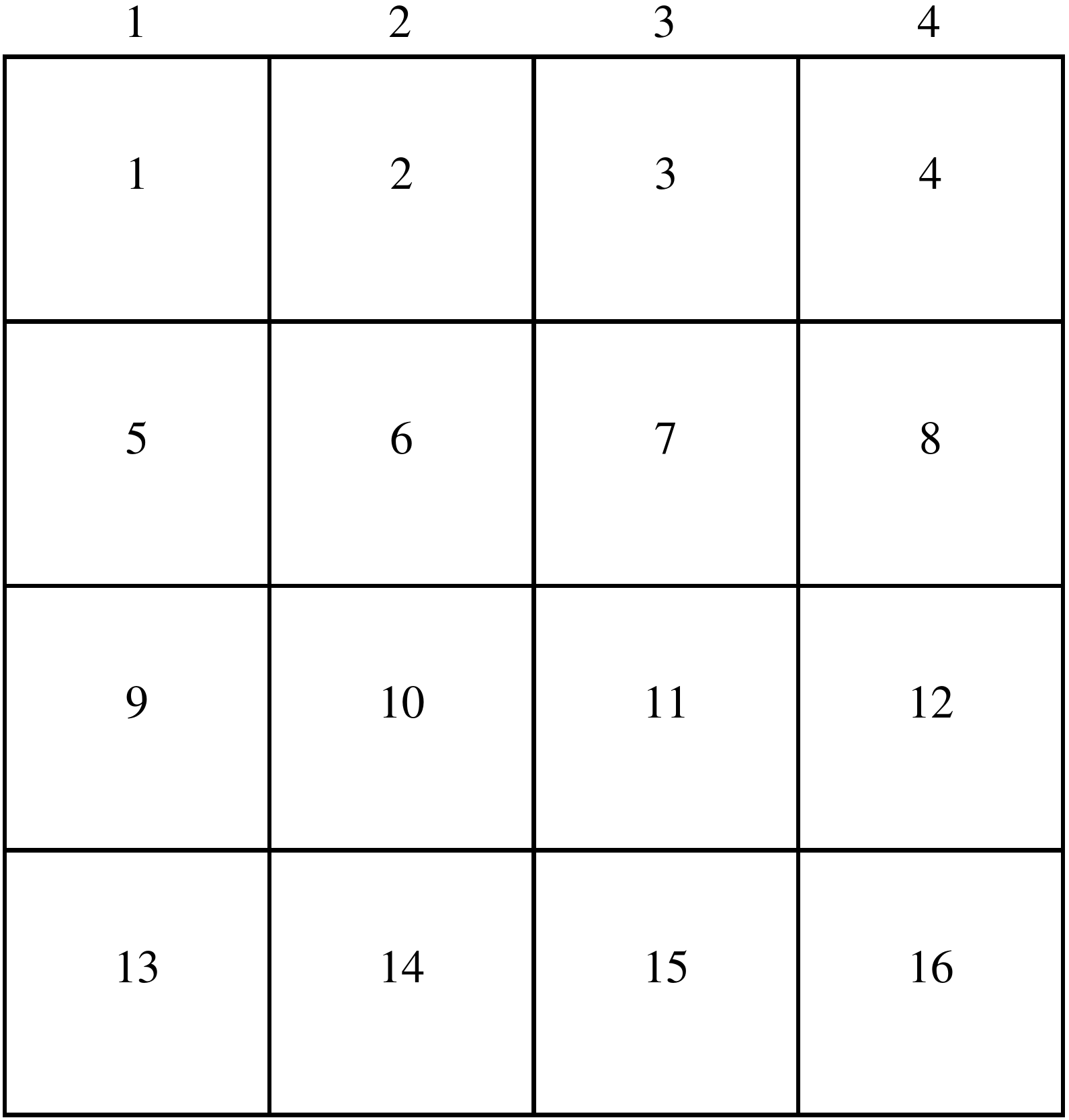}
\centering
\end{figure}

\begin{figure}[h] 
\caption{}\label{Fig:8x8}
\includegraphics[scale=0.4]{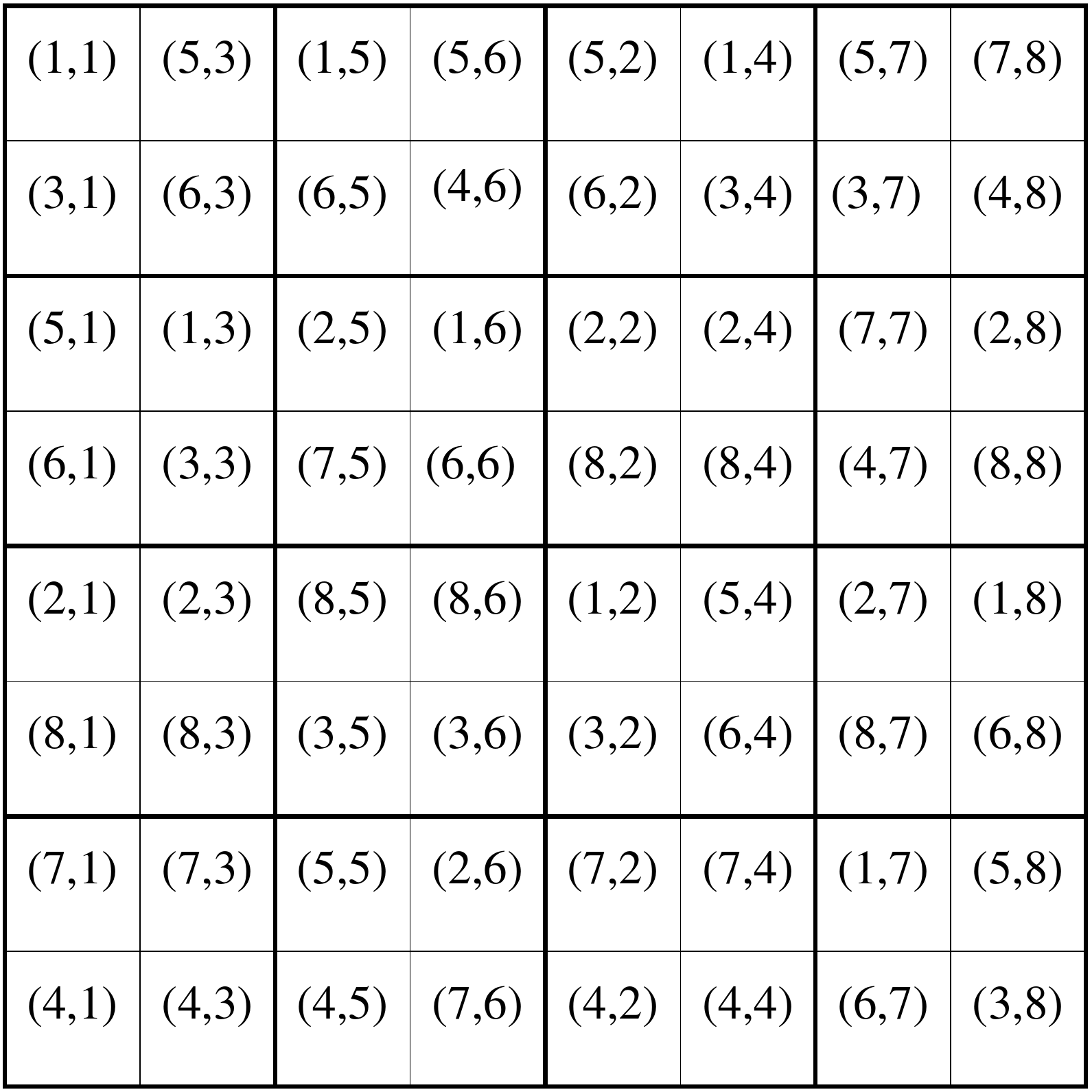}
\centering
\end{figure}

Suppose we were to construct $Q$ to be a rank $3$ permutation. Rather than write the entire cycle decomposition of $Q$ (as it would involve writing all $64$ $3$-squares), we label Figure \ref{Fig:8x8} to define $Q$. Here we have labeled the $3$-squares such that for a square labeled $(n,k),$ we have that $Q(n,k)=(n,k+1), (k \mod 8)$ (for consistency, here we have $8 \mod 8 := 8$ instead of $0$ as it typically would be). Further, if $n_1 \neq n_2,$ then $(n_1,k_1),(n_2,k_2)$ are in independent cycles. It is easy to see with this notation that $Q$ is an extension of a cyclic permutation $Q'$ on $X.$ We also note that the $Q$ we constructed is not the only possible $Q$ we could have constructed, as we have many free choices in the construction. 

To close this section, we note that a very simple modification of the proof of Theorem \ref{Thm:WATE} would yield a column preserving permutation $Q$ such that not only $Q'$ is cyclic, but $Q$ is cyclic as well. In our example seen in Figure \ref{Fig:8x8}, this modification would be accomplished by changing the definition of $Q$ slightly so that $Q(n,8)=(n+1,1),(n \mod 8)$. This formulation is more akin to the classical theorem. However, we choose the formulation given in Theorem \ref{Thm:WATE} as it is this formulation we need for further results.

\section{Uniform Approximation} \label{Sec:UniformApproximation}
Our goal in this section is to prove results that are generalizations of those needed for Halmos' classical Conjugacy Lemma (the key lemma for proving that weakly mixing transformations on $X$ are dense in $\mathcal{G}(X)$), and whose proofs quickly follow from the classical results and their proofs.

\begin{lem} \label{Lem:PeriodicExtensionPartition}
	Let $T \in \mathcal{G}_X$ where $T'$ is periodic of period $n$ (almost) everywhere. Then there exists a set $E$ such that $E= \pi^{-1}E'$ for some $E' \subset X,$ and $\{E,TE, \ldots, T^{n-1}E\}$ partition $Z.$ 
\end{lem}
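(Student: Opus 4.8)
The plan is to reduce the problem to the classical Rokhlin tower statement on the factor $(X,m,T')$ and then lift. Since $T'$ is periodic of period $n$ almost everywhere on $X$, the classical Rokhlin lemma for periodic transformations (or rather, the exact tower decomposition available when the period is constant) produces a measurable set $E' \subset X$ such that $\{E', T'E', \ldots, (T')^{n-1}E'\}$ partitions $X$ up to a null set. Concretely, one can take $E'$ to be a measurable transversal for the (almost everywhere) free action of $\mathbb{Z}/n\mathbb{Z}$ generated by $T'$: partition $X$ into the finitely many invariant pieces on which $(T')^j = I$ exactly (for each divisor $j \mid n$), and on the piece where the period is exactly $n$, a standard argument gives such a transversal. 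I would state this as a preliminary observation, citing the relevant fact from \cite{HalmosLectures}.

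Next, set $E := \pi^{-1}E' \subset Z$. I would then verify the two required properties. First, $E$ is by construction of the form $\pi^{-1}E'$. Second, since $T$ is an extension of $T'$ through $\pi$, we have $\pi \circ T = T' \circ \pi$, hence
$$T^k E = T^k \pi^{-1} E' = \pi^{-1}\big((T')^k E'\big)$$
for every $k$ — here the key point is that $T^k \pi^{-1}E' = \pi^{-1}(T')^k E'$, which follows because $T^k$ permutes fibers over $X$ according to $(T')^k$: a point $z$ lies in $T^k\pi^{-1}E'$ iff $T^{-k}z \in \pi^{-1}E'$ iff $\pi(T^{-k}z) \in E'$ iff $(T')^{-k}\pi(z) \in E'$ iff $\pi(z) \in (T')^k E'$. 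Therefore $\{E, TE, \ldots, T^{n-1}E\} = \{\pi^{-1}E', \pi^{-1}T'E', \ldots, \pi^{-1}(T')^{n-1}E'\}$, and since $\pi^{-1}$ preserves disjointness and unions, this family partitions $Z = \pi^{-1}X$ precisely because $\{E', T'E', \ldots, (T')^{n-1}E'\}$ partitions $X$.

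The only real content is the preliminary observation that a periodic $T'$ of constant period $n$ admits such an exact partition $\{E', \ldots, (T')^{n-1}E'\}$ of $X$; I expect this to be the main (though still routine) obstacle, since one must handle the fact that "period $n$ everywhere" allows points of lower period $j \mid n$ on invariant subsets, and on such a subset of period exactly $j$ one needs the partition to still have $n$ pieces (cycling with period $j$), which forces a slightly careful choice of transversal on each invariant piece and a check that the lower-period pieces contribute consistently — but on each invariant piece of period exactly $j$, taking a transversal $E'_j$ for that piece and noting $(T')^j E'_j = E'_j$ lets the $n$ translates wrap around $n/j$ times, still partitioning that piece. Assembling these transversals over the finitely many divisors $j \mid n$ gives the global $E'$, after which the lift is immediate.
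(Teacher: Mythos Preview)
Your approach is the same as the paper's: obtain a partition $\{E',T'E',\ldots,(T')^{n-1}E'\}$ of $X$ from the classical result for a transformation of period $n$, set $E=\pi^{-1}E'$, and use the extension relation to conclude. The paper argues disjointness from ``$T$ extends $T'$'' and covers $Z$ by the measure count $m(E)=1/n$; your fiberwise verification $T^kE=\pi^{-1}((T')^kE')$ is a slightly more explicit version of the same step.

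One correction to your final paragraph: the hypothesis ``period $n$ (almost) everywhere'' means minimal period $n$ for a.e.\ point, so the set of points of strictly smaller period is null and no separate treatment is needed. More importantly, your proposed handling of a positive-measure piece of exact period $j<n$ is wrong: if $(T')^jE'_j=E'_j$ on that piece then $(T')^kE'_j=(T')^{k+j}E'_j$, so the $n$ translates are not pairwise disjoint and cannot partition the piece into $n$ parts. In that situation the lemma as stated would simply fail, which is another way to see that the intended reading is ``minimal period $n$ a.e.'' Drop that paragraph and the proof is clean.
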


\begin{proof}
	Because $T'$ is has period $n$ everywhere, there exists $E'$ such that $\{E',T'E',\ldots, (T')^{n-1}E'\}$ partitions $X$. Setting $E:=\pi^{-1}E'$ we have $$\{E,TE, \ldots, T^{n-1}E\}$$ are pairwise disjoint because $T$ extends $T'$. Further, because $m(E)=m(E')=\frac{1}{n},$ we have $m\left(\bigcup_{i=0}^{n-1} T^iE\right)= \sum_{i=0}^{n-1} m(T^iE) = 1,$ or $\bigcup_{i=0}^{n-1} T^iE = X$.
\end{proof}

Next we move to a version of Rokhlin's lemma (see, for example, \cite[p.71]{HalmosLectures}).

\begin{lem} \label{Lem:AntiperiodicExtension}
	Let $T \in \mathcal{G}_X$ where $T'$ is antiperiodic. Then for every $n \in \mathbb{N}$ and $\epsilon >0$ there exists $E$ such that $E=\pi^{-1}E'$ for some $E',\{E,TE, \ldots, T^{n-1}E\}$ are pairwise disjoint, and $m\left(\bigcup_{i=0}^{n-1} T^iE\right) > 1-\epsilon.$
\end{lem}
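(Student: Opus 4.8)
## Proof Proposal for Lemma~\ref{Lem:AntiperiodicExtension}

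The plan is to reduce to the classical Rokhlin lemma applied to the base transformation $T'$, then lift the resulting tower base through the projection. First I would recall that the classical Rokhlin lemma (\cite[p.71]{HalmosLectures}) applies to $T'$, since $T'$ is an antiperiodic invertible measure-preserving transformation on the non-atomic standard space $(X,m)$: given $n \in \mathbb{N}$ and $\epsilon > 0$, there exists $E' \subset X$ such that $\{E', T'E', \ldots, (T')^{n-1}E'\}$ are pairwise disjoint and $m\left(\bigcup_{i=0}^{n-1} (T')^i E'\right) > 1 - \epsilon$. Set $E := \pi^{-1}E'$.

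The verification then proceeds exactly as in Lemma~\ref{Lem:PeriodicExtensionPartition}. Since $T$ extends $T'$ through $\pi$, we have $\pi(T^i E) = (T')^i(\pi E) = (T')^i E'$ for each $i$, and in fact $T^i E = T^i \pi^{-1} E' = \pi^{-1}((T')^i E')$ because $T$ respects the fibers of $\pi$. Hence the sets $\{E, TE, \ldots, T^{n-1}E\}$ are pairwise disjoint: if $T^i E$ and $T^j E$ overlapped on a positive-measure set for $i \neq j$, then projecting would force $(T')^i E' \cap (T')^j E'$ to have positive measure, contradicting the choice of $E'$. Likewise, since $m$ on $Z$ is the product measure $m \times \eta$ (with $\eta$ a probability measure) and $T^i E = \pi^{-1}((T')^i E')$, we get $\mu(T^i E) = m((T')^i E')$, so
$$\mu\left(\bigcup_{i=0}^{n-1} T^i E\right) = m\left(\bigcup_{i=0}^{n-1} (T')^i E'\right) > 1 - \epsilon.$$
Finally, $E = \pi^{-1}E'$ holds by construction, which is the remaining requirement.

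There is essentially no main obstacle here: the content is entirely the classical Rokhlin lemma for $T'$, and the extension structure does all the bookkeeping automatically since $\pi^{-1}$ of a base tower pulls back to a tower in $Z$ with the same measures. The only point requiring a line of care is the observation that $T^i \pi^{-1}E' = \pi^{-1}((T')^i E')$ — i.e., that the $T$-orbit of a cylinder set over the base is again the cylinder set over the $T'$-orbit — which is immediate from the commuting square $\pi \circ T = T' \circ \pi$ together with invertibility of both maps.
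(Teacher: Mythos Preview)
Your proposal is correct and follows essentially the same route as the paper: apply the classical Rokhlin lemma to the antiperiodic base transformation $T'$ to obtain $E'$, set $E=\pi^{-1}E'$, and use the extension relation $\pi\circ T=T'\circ\pi$ to transfer disjointness and the measure bound up to $Z$. The only difference is that you spell out the identity $T^i\pi^{-1}E'=\pi^{-1}((T')^iE')$ explicitly, whereas the paper leaves this implicit in the phrase ``because $T$ extends $T'$.''
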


\begin{proof}
	Let $n \in \mathbb{N}$ and $\epsilon >0$. Because $T'$ is antiperiodic, there exists $E' \subset X$ such that $\{E',T'E',\ldots, (T')^{n-1}E'\}$ are pairwise disjoint and $m\left(\bigcup_{i=0}^{n-1} (T')^iE'\right) > 1-\epsilon$. Let $E:= \pi^{-1}E'$. Because $T$ extends $T',\{E,TE, \ldots, T^{n-1}E\}$ are pairwise disjoint. Further $m\left(\bigcup_{i=0}^{n-1} T^iE\right)= \sum_{i=0}^{n-1} m(T^iE)=\sum_{i=0}^{n-1} m((T')^iE') = m\left(\bigcup_{i=0}^{n-1} (T')^iE'\right) > 1-\epsilon.$
\end{proof}

We conclude this section with a version of Halmos' Uniform Approximation Theorem (see \cite[p.75]{HalmosLectures}). 

\begin{thm}[Uniform Approximation Theorem for Extensions] \label{Thm:UATE}
	Let $T \in \mathcal{G}_X$\ where $T'$ is antiperiodic. Then for every $n \in \mathbb{N}$ and $\epsilon >0$ there exists $R \in \mathcal{G}_X,$ such that both $R$ and $R'$ are periodic with period $n$ almost everywhere, and $d'(R,T) \le \frac{1}{n} + \epsilon.$
\end{thm}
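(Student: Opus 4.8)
The plan is to mimic Halmos' classical proof of the Uniform Approximation Theorem, but carrying everything out at the level of the extension $T$ while keeping the fiber structure intact via the tools just developed. First I would apply Lemma \ref{Lem:AntiperiodicExtension}: since $T'$ is antiperiodic, for the given $n$ and a suitably small auxiliary parameter $\delta$ (to be chosen so that the measure of the ``error'' set is at most $\epsilon$), there exists $E = \pi^{-1}E'$ with $\{E, TE, \ldots, T^{n-1}E\}$ pairwise disjoint and $m\!\left(\bigcup_{i=0}^{n-1} T^iE\right) > 1 - \delta$. Write $Z = \bigsqcup_{i=0}^{n-1} T^iE \ \sqcup\ C$, where $C$ is the leftover set of measure less than $\delta$; note $C$ is automatically of the form $\pi^{-1}C'$ since $E$ and all its $T$-iterates are cylinder sets and $T$ is an extension, so $C' := X \setminus \bigcup_{i=0}^{n-1} (T')^iE'$ and $C = \pi^{-1}C'$.

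Next I would define $R$ to agree with $T$ on the ``tower'' $\bigcup_{i=0}^{n-2} T^iE$ — that is, $R = T$ there — and then collapse the top level back to the bottom: $R$ maps $T^{n-1}E$ onto $E$ by $(T^{n-1}E \ni z) \mapsto (T^n)^{-1}\!\cdot$ adjusted so that $R^n = \mathrm{id}$ on the tower. Concretely, on $T^{n-1}E$ set $R := (T^{-1})^{n-1}$ restricted appropriately, which is measure-preserving from $T^{n-1}E$ to $E$ and makes $R$ periodic of period exactly $n$ on the tower. On the residual set $C$, define $R$ to be any measure-preserving periodic-of-period-$n$ transformation of $C$ that is itself an extension of a periodic-of-period-$n$ transformation of $C'$ — this exists because $(C', m|_{C'})$ is (after normalization) again a non-atomic standard space and we may, for instance, split $C'$ into $n$ pieces of equal measure cyclically permuted, lifting trivially to $C$; call the factor piece $R'$ on $C'$ and on $C' = X\setminus \bigcup (T')^i E'$. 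Then $R$ is globally periodic of period $n$, $R'$ is globally periodic of period $n$, and $R$ extends $R'$, so $R \in \mathcal{G}_X$.

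For the estimate $d'(R,T) \le \frac{1}{n} + \epsilon$: the set $\{z : Rz \neq Tz\}$ is contained in $T^{n-1}E \cup C$, since $R = T$ on $\bigcup_{i=0}^{n-2} T^iE$ by construction. Now $\mu(T^{n-1}E) = \mu(E) = m(E') = \frac{1}{n}\,m\!\left(\bigcup_{i=0}^{n-1}(T')^iE'\right) \le \frac{1}{n}$, and $\mu(C) < \delta$. Choosing $\delta \le \epsilon$ at the outset gives $d'(R,T) = \mu(\{z : Rz \neq Tz\}) \le \frac{1}{n} + \epsilon$, as required. The one point needing care — the main (mild) obstacle — is verifying that $R$ is genuinely a well-defined invertible measure-preserving transformation and simultaneously an extension: the agreement $R=T$ on the lower levels, the collapse on the top level, and the independently-defined piece on $C$ must patch together consistently, and the factor transformation must end up periodic of period exactly $n$ rather than a proper divisor. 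Both are handled by the standard Halmos bookkeeping, now made compatible with the fiber structure precisely because $E$, $C$, and all intermediate sets are $\pi$-preimages and $T$ commutes with $\pi$ in the relevant sense.
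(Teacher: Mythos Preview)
Your proposal is correct and follows essentially the same approach as the paper: apply Lemma~\ref{Lem:AntiperiodicExtension} to obtain a Rokhlin tower of cylinder sets, define $R$ to agree with $T$ on the lower $n-1$ levels, collapse the top level via $T^{-(n-1)}$, and handle the leftover cylinder set by any period-$n$ extension (the paper uses $R' \times I$ there). The only cosmetic difference is that you introduce an auxiliary $\delta$ and then set $\delta \le \epsilon$, whereas the paper applies the lemma directly with $\epsilon$; the resulting estimate $d'(R,T) \le \mu(T^{n-1}E) + \mu(C) \le \tfrac{1}{n} + \epsilon$ is identical.
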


\begin{proof}
	By Lemma \ref{Lem:AntiperiodicExtension}, there exists $E$ a cylinder set, such that $\{E,TE, \ldots, T^{n-1}E\}$ are pairwise disjoint, and $m\left(\bigcup_{i=0}^{n-1} T^iE\right) > 1-\epsilon$. If $z \in \bigcup_{i=0}^{n-2} T^iE,$ define $Rz := Tz,$ and if $z \in T^{n-1}E,$ define $Rz := T^{-(n-1)}z,$ thus making $R$ have period $n$ for all points on which we have thus far defined it. Further, because $T$ extends $T', R$ is also an extension. And for any definition of $R$ on the remainder of $Z$, we have $d'(R,T) \le m(T^{n-1}E) + \epsilon \le \frac{1}{n} + \epsilon.$ 
	
	All that remains is to define $R$ on the remainder of $Z$ so that $R$ is an extension, and $R,R'$ have period $n.$ Since the remainder is a cylinder set, this can be done by defining $R'$ on the projection of the remainder, as you would in the classical case, and then letting $R= R' \times I$ on this set of measure $\epsilon$.
\end{proof}

\section{Conjugacy Lemma} \label{Sec:ConjugacyLemma}

We now prove a generalization of Halmos' Conjugacy Lemma (see \cite[p.77]{HalmosLectures}), using the same techniques as Halmos' original proof.

\begin{lem}[Conjugacy Lemma for Extensions] \label{Lem:ConjugacyLemmaExtensions}
	Let $T \in \mathcal{G}_X, T_0 \in \mathcal{G}_X$ such that $T'_0$ is antiperiodic, and let $N_{\epsilon}(T)= \{V \in \mathcal{G}_X: m(VD_i \triangle TD_i) < \epsilon, i=1,\ldots, N\}$ be a dyadic neighborhood of $T$. Then there exists $S \in \mathcal{G}_X$ such that $S^{-1}T_0S \in N_{\epsilon}(T).$ 
\end{lem}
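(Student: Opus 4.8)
The plan is to mimic Halmos' classical proof of the Conjugacy Lemma, lifting each ingredient to the setting of extensions using the tools assembled in the preceding sections. The overall strategy: first replace $T$ by a dyadic column-preserving permutation close to it, then use the Uniform Approximation Theorem for Extensions to approximate $T_0$ by a periodic extension of the same period, and finally build the conjugating map $S$ by matching up the Rokhlin towers of the two periodic systems in a way that respects the column structure.

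First I would invoke Theorem~\ref{Thm:WATE}: inside $N_{\epsilon/3}(T)$ there is $Q \in \mathcal{G}_X$ with $Q,Q'$ dyadic permutations of rank $k$, $Q'$ cyclic, and $Q$ periodic of period $2^k$ everywhere. It suffices to find $S \in \mathcal{G}_X$ with $S^{-1}T_0 S \in N_{\epsilon/3}(Q)$ (adjusting constants; really I should also absorb a $d$-vs-weak-topology estimate, so I would instead aim for $d(S^{-1}T_0S, Q)$ small, using $d \le d'$). Set $n := 2^k$. Next, since $T_0'$ is antiperiodic, Theorem~\ref{Thm:UATE} gives $R \in \mathcal{G}_X$ with $R,R'$ periodic of period $n$ a.e.\ and $d'(R,T_0) \le \tfrac1n + \delta$ for $\delta$ as small as I like; choosing $k$ large at the previous step makes $\tfrac1n$ small. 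Because $d'$ is conjugation-invariant in the sense $d'(S^{-1}RS, S^{-1}T_0S) = d'(R,T_0)$, it is enough to produce $S \in \mathcal{G}_X$ with $S^{-1}R S = Q$ exactly, or at least $d'(S^{-1}RS, Q)$ small.

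The heart of the argument is constructing $S$. Apply Lemma~\ref{Lem:PeriodicExtensionPartition} to both $Q$ (period $n$) and $R$ (period $n$ a.e., after discarding a null/small set) to obtain cylinder base sets $E_Q = \pi^{-1}E_Q'$ and $E_R = \pi^{-1}E_R'$ with $\{T^iE\}_{i=0}^{n-1}$ partitioning $Z$ in each case; note $m(E_Q)=m(E_R)=\tfrac1n$. Since $(X,m)$ is a non-atomic standard space, choose a measure-isomorphism $S_0'\colon E_R' \to E_Q'$, lift it to $S_0 := S_0' \times I$ on the fibers (so $S_0$ maps $E_R$ onto $E_Q$ as an extension of $S_0'$), and then extend it equivariantly by $S(R^i z) := Q^i S_0 z$ for $z \in E_R$, $0 \le i \le n-1$. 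This $S$ is a well-defined invertible measure-preserving transformation on $Z$, it commutes with the tower structure so that $S R = Q S$, i.e.\ $S^{-1} R S = Q$, and — crucially — because both towers are built over cylinder bases and $S$ is fiberwise the product of $S_0'$ with the identity composed with the column-preserving maps $Q^i, R^{-i}$, $S$ is itself an extension of the transformation $x \mapsto$ (the corresponding map of $X$), so $S \in \mathcal{G}_X$. On the residual small set where $R$ failed to have period exactly $n$, define $S$ arbitrarily as an extension; this contributes at most $O(\delta)$ to $d'$.

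Putting it together: $d(S^{-1}T_0S, Q) \le d(S^{-1}T_0S, S^{-1}RS) + d(S^{-1}RS, Q) \le d'(S^{-1}T_0S, S^{-1}RS) + 0 = d'(T_0,R) \le \tfrac1n + \delta$, which is $< \epsilon/3$ for $k$ large and $\delta$ small; then $d(S^{-1}T_0S, T) \le d(S^{-1}T_0S,Q) + d(Q,T)$, and since $d(Q,T) \le$ the weak-neighborhood bound controlling $\mu(QD_i \triangle TD_i)$, the composite estimate places $S^{-1}T_0S$ in $N_\epsilon(T)$ on each test set $D_i$. The main obstacle I anticipate is the bookkeeping needed to guarantee that the conjugator $S$ genuinely lies in $\mathcal{G}_X$: one must check that the equivariant extension $R^i z \mapsto Q^i S_0 z$ is consistent on overlaps (it is, since the $R^i E_R$ are disjoint), is measure-preserving (each $S$ restricted to $R^iE_R \to Q^iE_Q$ is a composition of measure isomorphisms), and, most delicately, commutes with $\pi$ in the sense that there is a single $S' \in \mathcal{G}(X)$ with $\pi \circ S = S' \circ \pi$ — this forces $S_0'$ and the choices to be made at the level of $X$ first and then lifted, exactly as in the proof of Theorem~\ref{Thm:UATE}. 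Handling the exceptional small set on which $R$ is only approximately periodic, and making sure that patch is also a legitimate element of $\mathcal{G}_X$, is the second piece of care required.
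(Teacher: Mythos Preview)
Your approach is essentially identical to the paper's: approximate $T$ by a periodic column-preserving permutation $Q$ via the Weak Approximation Theorem, approximate $T_0$ by an $R\in\mathcal{G}_X$ of the same period via the Uniform Approximation Theorem, conjugate $R$ to $Q$ by matching Rokhlin towers over cylinder bases, and finish using $d\le d'$ together with the bi-invariance of $d'$. Two small bookkeeping slips: your definition $S(R^iz)=Q^iS_0 z$ gives $SRS^{-1}=Q$, not $S^{-1}RS=Q$, so you should swap $S$ and $S^{-1}$ (the paper sends the $Q$-tower to the $R$-tower); and the final estimate should be done on each test set separately as $\mu(S^{-1}T_0S\,D_i\triangle TD_i)\le \mu(QD_i\triangle TD_i)+d(Q,S^{-1}T_0S)$, since $d(Q,T)$ is a supremum over \emph{all} measurable sets and is not controlled by the weak neighborhood. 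Also note that Theorem~\ref{Thm:UATE} already produces $R$ of period $n$ almost everywhere, so no residual-set patching is required.
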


\begin{proof}
	Let $k_0 \in \mathbb{N}$ be greater than the ranks of all $D_i$ and $\frac{1}{2^{k_0-2}} < \epsilon.$ Further, let $Q \in N_{\epsilon/2}(T)$, a dyadic permutations of rank $k \ge k_0$ with all properties guaranteed by the Weak Approximation Theorem for Extensions, Theorem \ref{Thm:WATE} ($Q'$ is cyclic, $Q$ is $2^k$ periodic). Applying the Uniform Approximation Theorem for Extensions, Theorem \ref{Thm:UATE}, with $2^k$ in place of $n$ and $\frac{1}{2^k}$ in place of $\epsilon$, there exists $R \in \mathcal{G}_X$ such that $R,R'$ are have period $2^k$ almost everywhere, and $d'(R,T_0) \le \frac{1}{2^k} + \frac{1}{2^k} < \frac{\epsilon}{2}$. 
	
	We will show $Q$ and $R$ are conjugate by some $S \in \mathcal{G}_X.$ Let $q = 2^k$ and  $E_0,\ldots, E_{q-1}$ be cylinder sets of dyadic intervals of rank $k$ in $X,$ arranged so that $QE_i = E_{i+1} ~(i \mod q).$ Note that $m(E_i)=\frac{1}{q}$ By Lemma \ref{Lem:PeriodicExtensionPartition}, there exists $F_0$, a cylinder set, such that $m(F_0)=\frac{1}{q}$ and $F_0,RF_0,\ldots,R^{q-1}F_0$ partition $Z$. Let $F_i:=R^iF_0$. Let $S$ be any measure preserving transformation which maps $E_0$ to $F_0$ as an extension of some $S'.$ Then for $z \in E_i,$ let $Sz:=R^iSQ^{-i}z.$ This can be seen in the following diagram:
	
	\[
	\xymatrix{
		E_0\ar[r]^{Q}\ar[d]_{S}&E_1\ar[r]^{Q}\ar[d]_{S}&E_2\ar[r]^{Q}\ar[d]_{S}&\ldots\ar[r]^{Q}&E_{q-2}\ar[r]^{Q}\ar[d]_{S}&E_{q-1}\ar[d]_{S} \\
		F_0\ar[r]_{R}&F_1\ar[r]_{R}&F_2\ar[r]_{R}&\ldots\ar[r]_{R}&F_{q-2}\ar[r]_{R}&F_{q-1}
	}
	\]
	
	Commutation of the diagram shows that $Q = S^{-1}RS.$ Further, because $Q,R,S\restriction_{E_0}$ are extensions of $Q',R',S'\restriction_{\pi E_0}, S$ is an extension of $S'$.
	
	Now, because $d'$ is invariant under group operations, we have 
	
	$$d'(Q,S^{-1}T_0S) \le d'(S^{-1}RS,S^{-1}T_0S) = d'(R,T_0) < \frac{\epsilon}{2}.$$
	
	 Thus, for any $D_i$ we have:
	
	\begin{align*} \label{Eqn:ConjugacyFinal}
	m(TD_i \triangle S^{-1}T_0SD_i) &\le m(TD_i \triangle QD_i) + m(QD_i \triangle S^{-1}T_0SD_i) \\ 
	&\le \frac{\epsilon}{2} + d(Q,S^{-1}T_0S). 
	\end{align*}
	
	But $d \le d',$ so $m(TD_i \triangle S^{-1}T_0SD_i) \le \frac{\epsilon}{2} + d'(Q,S^{-1}T_0S) < \frac{\epsilon}{2} + \frac{\epsilon}{2} = \epsilon.$
\end{proof}

This lemma is so important because as we will see in our main result in Theorem $\ref{Thm:CategoryTheorem},$ the conjugacy class of $\mathcal{W}_X$ is $\mathcal{W}_X$ itself. Thus, in proving Lemma \ref{Lem:ConjugacyLemmaExtensions}, we have indeed proven half of Theorem \ref{Thm:CategoryTheorem}.

\section{Category Theorem} \label{Sec:CategoryTheorem}

We are fast approaching our main goal: that $\mathcal{W}_X$ is a dense, $G_{\delta}$ subset of $\mathcal{G}_X$. Before we can prove it, we need to prove a few technical results. First we have a quick consequence of the Cauchy-Schwarz Inequality, Proposition \ref{Thm:C-S}, but it will be important enough to make a special note of it.

\begin{prop} \label{Prop:C-SNorms}
	Let $f,g \in L^2(Z|X).$ Then
	
	$$ \norm{\bbe(f g |X)}_{L^2(X)} \le \norm{\norm{f}_{L^2(Z|X)}}_{L^{\infty}(X)} \norm{g}_{L^2(Z)}.$$
\end{prop}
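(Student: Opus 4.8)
The plan is to reduce this to the pointwise Cauchy--Schwarz inequality already stated as Proposition \ref{Thm:C-S}, together with the fact that conditional expectation is an $L^2(X)$-orthogonal projection (so it does not increase the $L^2(X)$ norm). First I would apply Proposition \ref{Thm:C-S} with the pair $f,g$ to get, $m$-almost everywhere,
$$\abs{\langle f,g \rangle_{L^2(Z|X)}} = \abs{\bbe(f\overline{g}|X)} \le \norm{f}_{L^2(Z|X)} \norm{g}_{L^2(Z|X)}.$$
Here I would note that since the statement involves $\bbe(fg|X)$ rather than $\bbe(f\overline{g}|X)$, either we are working with real-valued functions or one simply replaces $g$ by $\overline{g}$; in any case $\abs{\bbe(fg|X)} \le \norm{f}_{L^2(Z|X)} \norm{g}_{L^2(Z|X)}$ holds a.e. by the same argument.

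Next I would bound the right-hand side: $\norm{f}_{L^2(Z|X)}$ is a function on $X$, and pulling out its $L^\infty(X)$ norm gives $\norm{f}_{L^2(Z|X)}(x) \le \norm{\norm{f}_{L^2(Z|X)}}_{L^{\infty}(X)}$ for a.e.\ $x$. Hence a.e.
$$\abs{\bbe(fg|X)}(x) \le \norm{\norm{f}_{L^2(Z|X)}}_{L^{\infty}(X)} \cdot \norm{g}_{L^2(Z|X)}(x).$$
Then I would take the $L^2(X)$ norm of both sides. On the left this produces $\norm{\bbe(fg|X)}_{L^2(X)}$. On the right the constant $\norm{\norm{f}_{L^2(Z|X)}}_{L^{\infty}(X)}$ pulls out, leaving $\norm{\norm{g}_{L^2(Z|X)}}_{L^2(X)}$, and I would observe that
$$\norm{\norm{g}_{L^2(Z|X)}}_{L^2(X)}^2 = \int_X \bbe(\abs{g}^2|X)\, dm = \int_Z \abs{g}^2 \, d\mu = \norm{g}_{L^2(Z)}^2,$$
using that $\bbe(\cdot|X)$ preserves integrals. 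Combining these gives exactly the claimed inequality.

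I do not expect any genuine obstacle here; this is a short deduction from Proposition \ref{Thm:C-S}. The only point requiring a word of care is monotonicity of the $L^2(X)$ norm under a pointwise (a.e.) inequality of nonnegative functions, which is immediate, and the bookkeeping that $\norm{g}_{L^2(Z|X)}$ is genuinely in $L^2(X)$ (indeed in $L^\infty(X)$ by definition of $L^2(Z|X)$, though we only need $L^2(X)$ via the identity above), so all quantities are finite and the manipulations are justified. If one wants to be fully careful about the complex-conjugate discrepancy between $\bbe(fg|X)$ in the statement and $\bbe(f\overline g|X)$ in Proposition \ref{Thm:C-S}, I would simply remark at the outset that the inequality is applied with $\overline g$ in place of $g$ (noting $\norm{\overline g}_{L^2(Z|X)} = \norm{g}_{L^2(Z|X)}$ and $\norm{\overline g}_{L^2(Z)} = \norm{g}_{L^2(Z)}$).
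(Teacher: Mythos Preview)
Your proposal is correct and follows essentially the same route as the paper: apply the pointwise Cauchy--Schwarz inequality (Proposition~\ref{Thm:C-S}), pull out $\norm{\norm{f}_{L^2(Z|X)}}_{L^\infty(X)}$, then take $L^2(X)$ norms and identify $\norm{\norm{g}_{L^2(Z|X)}}_{L^2(X)}$ with $\norm{g}_{L^2(Z)}$ (the paper does this last step via Fubini, you via the integral-preserving property of conditional expectation, which amounts to the same thing here). Your remark about the $\overline{g}$ versus $g$ discrepancy is a fair observation the paper glosses over.
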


\begin{proof}
	By the Cauchy-Schwarz Inequality for $L^2(Z|X)$, we have

	$$\abs{\bbe(f g |X)} \le \bbe(\abs{f}^2|X)^{1/2} \bbe(\abs{g}^2|X)^{1/2}$$

	pointwise. Now, by definition of $L^2(Z|X), \bbe(\abs{f}^2|X)^{1/2} \in L^{\infty}(X).$ Letting $M := \norm{\norm{f}_{L^2(Z|X)}}_{L^{\infty}(X)},$ we have 
	
	\begin{equation} \label{Eq:C-S}
	\abs{\bbe(f g |X)} \le M  \bbe(\abs{g}^2|X)^{1/2}.
	\end{equation}
	
	Now note that by Fubini,
	
	\begin{align*}
	 \norm{\bbe(\abs{g}^2|X)^{1/2}}_{L^2(X)}
	&= \ \left(\int_X \left(\left( \int_Y \abs{g}^2(x,y) d \mu_Y \right)^{1/2}\right)^2 d \mu_X \right)^{1/2} \\ &= \ \left( \int_Z \abs{g}^2 dm_2 \right)^{1/2} = \norm{g}_{L^2(Z)},
	\end{align*}
	
	where $Y=X, \mu_X=\mu_Y = m_1$ (the notation here was changed to clarify what integrals were intended). And so taking $L^2(X)$ norm on both sides of (\ref{Eq:C-S}), we arrive at the desired inequality.
\end{proof}

Our next goal is to prove that $T \in \mathcal{W}_X$ is equivalent to the existence of a subsequence $n_k$ such that for all $f,g \in L^2(Z|X),$

\begin{equation} \label{Eqn:RelWMSubsequence}
\lim_{k \to \infty} \norm{\bbe(T^{n_k}f \overline{g} |X) - (T')^{n_k} \bbe(f|X)\bbe(\overline{g}|X)}_{L^2(X)} = 0. 
\end{equation}

To this end, we first show that if (\ref{Eqn:RelWMSubsequence}) holds for an $L^2(Z)$-dense subset of $L^2(Z|X),$ it holds for all of $L^2(Z|X).$

\begin{lem} \label{Lem:RWML2Dense}
	Let $T \in \mathcal{G}_X,$ and let $D \subset L^2(Z|X),$ with $\norm{\norm{\overline{f}}_{L^2(Z|X)}}_{L^{\infty}(X)} \le 1$ for all $f \in D,$ such that $D$ is dense in the unit ball of $L^2(Z|X),$ but with respect to the $L^2(Z)$ norm topology. Further suppose that there exists a subsequence $n_k$ such that for all $f_i,f_j \in D,$
	
	$$\lim_{k \to \infty} \norm{\bbe(T^{n_k}f_i \overline{f_j}|X) - (T')^{n_k}\bbe(f_i|X)\bbe(\overline{f_j}|X)}_{L^2(X)} = 0.$$
	
	Then for all $h,g \in L^2(Z|X)$
	
	$$\lim_{k \to \infty} \norm{\bbe(T^{n_k}h \overline{g}|X) - (T')^{n_k}\bbe(h|X)\bbe(\overline{g}|X)}_{L^2(X)} = 0.$$
\end{lem}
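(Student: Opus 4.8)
The plan is to reduce the general statement to the dense set $D$ by a standard two-step approximation argument, using Proposition \ref{Prop:C-SNorms} to control the error terms in the $L^2(X)$ norm. First I would observe that both sides of the identity we want are, in an appropriate sense, bilinear in $(h,g)$, so it suffices to swap out $h$ for some $f_i \in D$ and then $g$ for some $f_j \in D$, one at a time, estimating the resulting difference. Concretely, given $h,g \in L^2(Z|X)$, I would first reduce to the case where both have $L^\infty(X)$-bounded conditional $L^2$ norms (a genuine $L^2(Z|X)$ element can be truncated on the base $X$ to achieve this while changing it little in $L^2(Z)$; alternatively, since $D$ is assumed dense in the unit ball, rescale so $h,g$ lie in that ball), pick $f_i \in D$ with $\norm{h - f_i}_{L^2(Z)}$ small and $f_j \in D$ with $\norm{g - f_j}_{L^2(Z)}$ small.

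The key estimate is then: write
$$\bbe(T^{n_k}h\overline{g}|X) - \bbe(T^{n_k}f_i\overline{f_j}|X) = \bbe(T^{n_k}(h-f_i)\overline{g}|X) + \bbe(T^{n_k}f_i\overline{(g-f_j)}|X),$$
and apply Proposition \ref{Prop:C-SNorms} to each term, taking $L^2(X)$ norms. For the first term, $\norm{\bbe(T^{n_k}(h-f_i)\overline g|X)}_{L^2(X)} \le \norm{\norm{T^{n_k}(h-f_i)}_{L^2(Z|X)}}_{L^\infty(X)} \norm{g}_{L^2(Z)}$. Here I would use that the Koopman operator $T$ is an isometry of $L^2(Z|X)$ in the sense that $\norm{T^{n_k}\phi}_{L^2(Z|X)} = (T')^{n_k}\norm{\phi}_{L^2(Z|X)}$ pointwise (because $T$ extends $T'$, so conditional expectations over $X$ are intertwined), hence its $L^\infty(X)$ norm is unchanged; thus this term is bounded by $\norm{\norm{h-f_i}_{L^2(Z|X)}}_{L^\infty(X)}\norm{g}_{L^2(Z)}$, which is small provided $h - f_i$ is small in the $L^\infty(X)$-of-conditional-$L^2$ sense. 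Symmetrically for the second term. One must similarly handle the difference of the "product" terms $(T')^{n_k}\bbe(h|X)\bbe(\overline g|X) - (T')^{n_k}\bbe(f_i|X)\bbe(\overline{f_j}|X)$, which is even easier since $(T')^{n_k}$ is an $L^2(X)$ isometry and $\bbe(\cdot|X)$ is a contraction controlled by the same norms. Combining, the $L^2(X)$ norm of the full difference between the $(h,g)$ expression and the $(f_i,f_j)$ expression is uniformly (in $k$) small, and then the hypothesis on $D$ gives that the $(f_i,f_j)$ expression $\to 0$; a triangle inequality and letting the approximations improve finishes the proof.

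The main obstacle is the bookkeeping around which norm controls which error: the hypothesis gives $D$ dense only in the $L^2(Z)$ topology, but Proposition \ref{Prop:C-SNorms} needs one factor controlled in the $\norm{\norm{\cdot}_{L^2(Z|X)}}_{L^\infty(X)}$ norm and the other in $L^2(Z)$. The trick is that in each of the two swap steps, the factor being approximated can be paired against the \emph{other} factor in whichever slot is convenient — approximate $h$ and pair the error $h-f_i$ in the $L^\infty(X)$-slot against $g$ in the $L^2(Z)$-slot, but this forces $h-f_i$ to be small in the strong ($L^\infty(X)$) norm, which is \emph{not} what density of $D$ gives. This is why the preliminary reduction matters: one should first note that it is enough to prove \eqref{Eqn:RelWMSubsequence} for $h,g$ ranging over a convenient dense-in-$L^2(Z)$ subset of the unit ball of $L^2(Z|X)$ that is itself contained in, or easily approximated by, $D$ in the \emph{right} topology — or, more cleanly, reverse the roles: put the \emph{approximated} factor in the $L^2(Z)$-slot and the fixed factor (with known bounded conditional norm, e.g. an element of $D$ or a truncation) in the $L^\infty(X)$-slot. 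Getting this pairing consistent through both swaps, while keeping every bound uniform in $k$, is the only subtle point; once it is set up correctly the estimate is routine.
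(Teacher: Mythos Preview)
Your proposal is correct and, once you arrive at the ``reverse the roles'' fix, is exactly the paper's argument. The paper writes $\bbe(T^n h_j \overline{g_j}|X) - \bbe(T^n h \overline{g}|X) = \bbe(T^n h_j(\overline{g_j}-\overline g)|X) + \bbe(\overline g\,T^n(h_j-h)|X)$ and then applies Proposition~\ref{Prop:C-SNorms} with precisely the pairing you describe at the end: in the first summand the fixed factor $T^n h_j$ (coming from $D$, hence with conditional norm $\le 1$, preserved by $T^n$) sits in the $L^\infty(X)$-slot and the error $\overline{g_j}-\overline g$ in the $L^2(Z)$-slot; in the second summand the fixed $\overline g$ (which is in $L^2(Z|X)$ by hypothesis, so automatically has finite $L^\infty(X)$-conditional norm --- no truncation is needed) sits in the $L^\infty(X)$-slot and the error $T^n(h_j-h)$ in the $L^2(Z)$-slot, using that $T$ is an $L^2(Z)$ isometry. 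The product term is handled the same way, and all bounds are uniform in $n$, so the triangle inequality finishes it. Your initial pairing (putting $h-f_i$ in the $L^\infty$-slot) indeed fails for the reason you identify, but your self-correction is the right move and matches the paper verbatim.
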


\begin{proof}
	Fix $h,g \in L^2(Z|X)$ and let $\{h_j\}, \{g_j\} \subset D$ such that 
	
	$$h_j \xrightarrow{L^2(Z)} h, g_j \xrightarrow{L^2(Z)} g.$$
	
	We first claim that
	
	$$\lim_{j \to \infty} \norm{\bbe(T^nh_j \overline{g_j}|X) - \bbe(T^nh \overline{g}|X)}_{L^2(X)} = 0,$$
	
	uniformly with respect to $n$. Indeed, we have 
	
	\begin{align*}
	&\quad\ \ \norm{\bbe(T^nh_j \overline{g_j}|X) - \bbe(T^nh \overline{g}|X)}_{L^2(X)} \\
	&\le \ \norm{\bbe(T^nh_j \overline{g_j}- T^n h_j \overline{g}|X)}_{L^2(X)} + \norm{\bbe(T^n h_j \overline{g} - T^nh \overline{g}|X)}_{L^2(X)} \\
	&= \ \norm{\bbe(T^nh_j (\overline{g_j}- \overline{g})|X)}_{L^2(X)} + \norm{\bbe(\overline{g}(T^n h_j  - T^nh)|X)}_{L^2(X)}.
	\end{align*}
	
	Now by Proposition \ref{Prop:C-SNorms}
	
	\begin{align*}
	\norm{\bbe(T^nh_j (\overline{g_j}- \overline{g})|X)}_{L^2(X)} &\le \norm{\norm{T^nh_j}_{L^2(Z|X)}}_{L^{\infty}(X)} \norm{\overline{g_j}- \overline{g}}_{L^2(Z)}, \\
	\norm{\bbe(\overline{g}(T^n h_j  - T^nh)|X)}_{L^2(X)} &\le \norm{\norm{\overline{g}}_{L^2(Z|X)}}_{L^{\infty}(X)} \norm{T^n(h-h_j)}_{L^2(Z)}.
	\end{align*}
	
	In turn, as $\norm{\norm{T^nh_j}_{L^2(Z|X)}}_{L^{\infty}(X)} = \norm{\norm{h_j}_{L^2(Z|X)}}_{L^{\infty}(X)}$ and $T$ is an isometry, we get
	
		\begin{align*}
	\norm{\norm{T^nh_j}_{L^2(Z|X)}}_{L^{\infty}(X)} \norm{\overline{g_j}- \overline{g}}_{L^2(Z)} &\le \norm{\norm{h_j}_{L^2(Z|X)}}_{L^{\infty}(X)} \norm{\overline{g_j}- \overline{g}}_{L^2(Z)}, \\
	\norm{\norm{\overline{g}}_{L^2(Z|X)}}_{L^{\infty}(X)} \norm{T^n(h-h_j)}_{L^2(Z)} &\le \norm{\norm{\overline{g}}_{L^2(Z|X)}}_{L^{\infty}(X)} \norm{h-h_j}_{L^2(Z)}.
	\end{align*}

	Because $h_j \to h, g_j \to g$ in $L^2(Z),$ we have the desired result. Note that a similar argument using Proposition \ref{Prop:C-SNorms} will show $\bbe(\overline{g_j}|X) \to \bbe(\overline{g}|X), (T')^n\bbe( h_j |X) \to (T')^n\bbe(h|X))$ in $L^2(X).$
	
	Now,
	
	\begin{align*}
	&\quad\ \norm{\bbe(T^nh \overline{g}|X) - (T')^n\bbe(h|X)\bbe(\overline{g}|X)}_{L^2(X)} \\
	&\le \ \norm{\bbe(T^nh \overline{g}|X) - \bbe(T^nh_j \overline{g_j}|X)}_{L^2(X)} \\
	&+ \ \norm{\bbe(T^nh_j \overline{g_j}|X) - (T')^n\bbe(h_j|X)\bbe(\overline{g_j}|X)}_{L^2(X)} \\
	&+ \ \norm{(T')^n\bbe(h_j|X)\bbe(\overline{g_j}|X) - (T')^n\bbe(h|X)\bbe(\overline{g}|X)}_{L^2(X)}.
	\end{align*}
	
	By hypothesis, there is a subsequence $n_k$ (independent of $j$) such that the middle term converges to 0. Further, the first and third terms converge to 0 as $j \to \infty$ uniformly in $n$, so 
	
	$$\lim_{k \to \infty} \norm{\bbe(T^{n_k}h \overline{g}|X) - (T')^{n_k}\bbe(h|X)\bbe(\overline{g}|X)}_{L^2(X)} = 0$$
	
	as desired.
\end{proof}

Next, recall that a function $f \in L^2(Z|X)$ is called \textit{generalized eigenfunction} for a given $T \in \mathcal{G}_X$ (see \cite[p.179]{Glasner}) if the $L^{\infty}(X)$-module spanned by $\{T^nf: n \in \mathbb{N}\}$ has finite rank. In other words, there exists $g_1, \ldots, g_l \in L^2(Z|X)$ such that for all $n$, there exists $c^n_j \in L^{\infty}(X), 1 \le j \le l$ such that

$$T^nf(x,y) = \sum_j c^n_j(x) g_j(x,y).$$

\begin{lem} \label{Lem:RelWMSubsequence}
Let $T \in \mathcal{G}_X$. Then $T \in \mathcal{W}_X$ if and only if there exists a subsequence $n_k$ such that for all $f,g \in L^2(Z|X)$

\begin{equation} \label{Eqn:RelWMSubsequence2}
\lim_{k \to \infty} \norm{\bbe(T^{n_k}f \overline{g} |X) - (T')^{n_k} \bbe(f|X)\bbe(\overline{g}|X)}_{L^2(X)} = 0. 
\end{equation}
\end{lem}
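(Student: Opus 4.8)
The plan is to prove the equivalence in Lemma \ref{Lem:RelWMSubsequence} by reducing to a Koopman-operator / Hilbert-module Koopman-von Neumann style argument on $L^2(Z|X)$. The forward direction is the easy one: if $T \in \mathcal{W}_X$, then by Definition \ref{Def:WeakMixingExtension} the Ces\`aro averages of $a_n(f,g):=\norm{\bbe(T^nf\overline{g}|X) - (T')^n\bbe(f|X)\bbe(g|X)}_{L^2(X)}$ tend to zero for each fixed pair $f,g$. Applying the elementary Koopman-von Neumann lemma (a bounded nonnegative sequence has Ces\`aro mean zero iff it converges to zero along a density-one subset), one extracts for each pair a density-one set along which $a_n \to 0$; the issue is that this set depends on $f,g$. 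To get a single subsequence $n_k$ working simultaneously for all $f,g$, I would first restrict to a countable $L^2(Z)$-dense set $D$ in the unit ball of $L^2(Z|X)$ (such a set exists since $Z$ is standard; one may take $D$ to consist of, say, simple functions, which are bounded hence lie in $L^2(Z|X)$ with the normalization $\norm{\norm{\overline f}_{L^2(Z|X)}}_{L^\infty(X)} \le 1$ after scaling). By a diagonal argument over the countable family of pairs in $D$, using that a finite intersection — indeed a countable intersection won't work directly, so one uses the standard trick: intersect density-one sets for the first $j$ pairs to get density-one sets $J_1 \supseteq J_2 \supseteq \cdots$, and then diagonalize to extract a single subsequence $n_k$ along which $a_{n_k}(f_i,f_j)\to 0$ for all $f_i,f_j \in D$. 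Then Lemma \ref{Lem:RWML2Dense} upgrades this from $D$ to all of $L^2(Z|X)$, giving \eqref{Eqn:RelWMSubsequence2}.

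For the reverse direction, suppose such a subsequence $n_k$ exists. I want to recover the full Ces\`aro limit in Definition \ref{Def:WeakMixingExtension}. The cleanest route is again Koopman-von Neumann in the converse form, but that requires showing $a_n \to 0$ along a density-one set, not merely a subsequence — and an arbitrary subsequence need not have positive density. So the reverse direction should instead be argued via the spectral/structural characterization: one shows that the existence of a subsequence along which all relative correlations factor through the base is equivalent to the absence of non-trivial generalized eigenfunctions (this is exactly why the paper recalls the notion of generalized eigenfunction immediately before the lemma). Concretely, I would argue the contrapositive: if $T \notin \mathcal{W}_X$, then the relative Jacobs–de Leeuw–Glicksberg decomposition of $L^2(Z|X)$ (or the Furstenberg–Zimmer dichotomy for the extension $Z \to X$) produces a non-constant — i.e. non-$L^2(X)$-valued — generalized eigenfunction $f$: there are $g_1,\dots,g_l$ with $T^nf = \sum_j c^n_j g_j$, $c^n_j \in L^\infty(X)$. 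For such an $f$ the vector $(c^n_1,\dots,c^n_l)$ lives in a fixed finite-rank module, the orbit is relatively compact, and $\bbe(T^nf\overline f|X) - (T')^n\bbe(f|X)\bbe(\overline f|X)$ takes values in a compact set not containing $0$ along any subsequence of positive relative mass; more carefully, along any subsequence $n_k$ one can pass to a further subsequence on which $T^{n_k}f$ converges (in the module norm) to some $h$, and a rigidity computation shows $\norm{\bbe(h\overline f|X) - \lim (T')^{n_k}\bbe(f|X)\bbe(\overline f|X)}_{L^2(X)} \ne 0$, contradicting \eqref{Eqn:RelWMSubsequence2}. Hence no such subsequence can exist unless $T$ is already weakly mixing as an extension.

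I expect the main obstacle to be the reverse implication, specifically making rigorous the claim that failure of relative weak mixing forces a non-trivial generalized eigenfunction whose relative self-correlations stay bounded away from $0$ along every subsequence. This is where one genuinely needs the relative version of the mean ergodic / compact-part decomposition for $L^2(Z|X)$ over $L^\infty(X)$; the forward direction and the diagonal extraction, while fiddly, are routine given Lemma \ref{Lem:RWML2Dense} and the classical Koopman-von Neumann lemma. A secondary technical point is ensuring all the density and normalization hypotheses of Lemma \ref{Lem:RWML2Dense} are met by the chosen countable set $D$ — in particular that the generalized eigenfunction $f$ produced in the converse can be normalized so that $\norm{\norm{\overline f}_{L^2(Z|X)}}_{L^\infty(X)} \le 1$ without destroying the finite-rank property, which is immediate since scaling by a constant preserves the spanning module.
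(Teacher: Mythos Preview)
Your forward direction is correct and matches the paper's argument exactly: restrict to a countable $L^2(Z)$-dense family in the unit ball of $L^2(Z|X)$, apply Koopman--von Neumann to each pair to get density-one sets, take nested intersections and diagonalize, then invoke Lemma~\ref{Lem:RWML2Dense}.

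For the converse you correctly identify the strategy --- argue the contrapositive via a non-trivial generalized eigenfunction $f$ --- and this is precisely what the paper does. But the execution you sketch has a real gap. Testing only the self-correlation $\bbe(T^{n}f\,\overline f\,|X)$ cannot work: this quantity can vanish along an entire subsequence even when $f$ is a genuine (non-$L^\infty(X)$) generalized eigenfunction. Already in the classical case $X=\{\mathrm{pt}\}$, $Y=\mathbb{Z}/4$, $T=$ shift, the function $f=\delta_0-\delta_2$ lies in a rank-two invariant module and $\langle T^{2k+1}f,f\rangle=0$ for every $k$. Your alternative route --- pass to a further subsequence on which $T^{n_k}f\to h$ in the module norm --- is likewise not justified: the coefficients $c^n_j$ live in the infinite-dimensional space $L^\infty(X)$, and boundedness of $\norm{T^nf}_{L^2(Z)}$ gives no sequential compactness there. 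Even granting such an $h$, nothing forces $\bbe(h\,\overline f\,|X)\neq 0$.

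The paper's fix is a pigeonhole over the finite basis rather than a compactness argument. One first runs a \emph{relative} Gram--Schmidt so that $g_1,\dots,g_l$ become relatively orthonormal; then $\bbe(T^nf\,\overline{g_j}\,|X)=c^n_j$ exactly, and since $\norm{T^nf}=\norm f$, at least one coefficient satisfies $\norm{c^n_{j(n)}}\ge 1/l$ for every $n$. The map $n\mapsto j(n)$ partitions $\mathbb{N}$ into finitely many pieces $B_1,\dots,B_l$; any prescribed subsequence $(n_k)$ meets some $B_i$ infinitely often, and along that sub-subsequence $\norm{\bbe(T^{n_k}f\,\overline{g_i}\,|X)}_{L^2(X)}\ge 1/l$, contradicting \eqref{Eqn:RelWMSubsequence2} for the pair $(f,g_i)$. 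The idea missing from your sketch is precisely this: you must let the test function $g$ range over the basis elements $g_1,\dots,g_l$, not just take $g=f$.
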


\begin{proof}
Let $T \in \mathcal{W}_X.$ Lemma \ref{Lem:RWML2Dense} tells us that we need only show that there exists a subsequence such that (\ref{Eqn:RelWMSubsequence2}) holds for an $L^2(Z)$-dense subset of $L^2(Z|X)$. Let $(f_i)_{i=1}^{\infty}$ be an enumeration of that dense subset, and let $l \mapsto (f_i,f_j), 1 \le l < \infty$ order the set $\{ (f_i,f_j) \}$ arbitrarily. Now, by the definition of a weakly mixing extension and the Koopman-von Neumann Lemma (see, e.g., \cite[p.54]{EinsiedlerWard}), for each $l$ there exists a subsequence $n'^l_{m}$ of upper asymptotic density 1 such that 

$$\lim_{m \to \infty} \norm{\bbe(T^{n'^l_{m}}f_i \overline{f_j} |X) - (T')^{n'^l_{m}} \bbe(f_i|X)\bbe(\overline{f_j}|X)}_{L^2(X)} = 0.$$

Stated differently, for the pair $f_i,f_j$ corresponding to $l$, we have the desired convergence along the dense subsequence $n'^l_{m}$. We now define a new density 1 subsequence for each $l$ inductively. Let $n^1_{m}:= n'^1_m,$ and for $l > 1,$ define $n^l_m$ to be a common density 1 subsequence of $n^{l-1}_m$ and $n'^l_m.$ Now define $n_k$ be a diagonal sequence obtained from $n^l_m,$ i.e., $n_k := n^k_k.$ It is easy to see that for all $f_i,f_j,$ (\ref{Eqn:RelWMSubsequence2}) holds.

To prove the converse, suppose $T \in \mathcal{G}_X \backslash \mathcal{W}_X$. Then there exists $f \in L^2(Z|X) \backslash L^{\infty}(X)$ that is a generalized eigenfunction for $T$, see \cite[p.192]{Glasner}. Without loss of generality, $\norm{f}_{L^1(Z)}=1.$ Let $g_1, \ldots, g_l \in L^2(Z|X)$ be a basis for the module spanned by $T^nf.$ We want $g_i$ to be ``relatively orthonormal''. That is, we want $\bbe(g_i \overline{g}_j|X)=0 \ a.e.$ when $i \neq j$ and $\bbe(\abs{g_j}^2|X)=1 \ a.e.$ This can be accomplished with a relative Gram-Schmidt process. We start by defining $h'_1:= g_1.$ For any $x$ such that $\bbe(\abs{h'_1}^2|X)(x)=0,$ we have that $h'_1(x,y):=h'_{1,x}(y) \equiv 0.$ Thus by setting the corresponding $c^n_1(x)=0$ for all $n$ we can define $h_{1,x}(y)$ arbitrarily, so long as it's not identically 0. For all other $x$, define $h_{1,x}(y):= h'_{1,x}(y)$. Now, having defined $h_j, 1 \le j \le i-1,$ we define $h'_i$ by

$$h'_i := g_i - \sum_{j=1}^{i-1} \frac{\bbe(g_i \overline{h}_j|X)}{\bbe(\abs{h_j}^2|X)}h_j.$$

Similar to the above, if there are any $x$ such that $\bbe(\abs{h'_i}^2|X)(x)=0,$ we define $h_{i,x}(y) \not\equiv 0$ (again changing $c^n_i(x)$ to 0 for all $n$) and for all other $x, h_{i,x}(y) := h'_{i,x}(y)$. Finally we normalize and redefine $g_j$ so that

$$g_j(x,y):= \frac{h_j(x,y)}{\bbe(\abs{h_j}^2|X)(x)^{1/2}}.$$

Now define a function $j : \mathbb{N} \to \{1, \ldots, l\}$ such that $\norm{c^n_{j(n)}}_{L^2(Z)} \ge \norm{c^n_{i}}_{L^2(Z)}, 1 \le i \le l.$ Note that for each $n, \norm{c^n_{j(n)}}_{L^2(Z)} \ge 1/l$ as else

$$\norm{T^nf}_{L^1(Z)} \le \sum_i \norm{c^n_i g_i}_1 \le \sum_i \norm{c^n_i}_2 \norm{g_i}_2 < 1.$$

Fix $n$ and suppose for now that $\bbe(f|X) \equiv 0$ almost everywhere. Note that this is guaranteed to be possible because if $f= f_0 + \bbe(f|X)$ is a generalized eigenfunction with basis $\{g_1, \ldots g_l\},$ then $f_0$ is a generalized eigenfunction with spanning set $\{g_1, \ldots, g_l, \bbe(f|X)\},$ and $\bbe(f_0|X) \equiv 0$ by design. Now, by relative orthonormality we have

\begin{align*}
&\quad \ \norm{\bbe(T^nf \overline{g}_{j(n)} |X) - (T')^n \bbe(f|X)\bbe(\overline{g}_{j(n)}|X)}_{L^2(X)} \\
&= \norm{\bbe(T^nf \overline{g}_{j(n)} |X)}_{L^2(X)} \\
&= \norm{\bbe\left(\left(\sum_{i=1}^l c^n_i g_i\right) \overline{g}_{j(n)} |X \right)}_{L^2(X)} \\
&= \norm{\bbe(c^n_{j(n)} g_{j(n)} \overline{g}_{j(n)} |X)}_{L^2(X)} \\
&= \norm{c^n_j \bbe(\abs{g_{j(n)}}^2|X)}_{L^2(X)} \\
&= \norm{c^n_j}_{L^2(Z)} \ge \frac{1}{l}.
\end{align*}

Define $B_i := j^{-1}(i)$. By the above work, if $n \in B_i,$

$$\norm{\bbe(T^nf \overline{g}_i |X)}_{L^2(X)} \ge \frac{1}{l}.$$

Now given any subsequence $(n_k)$ there will be at least one $i \in \{\1, \ldots, l\}$ such that $(n_k)$ intersects $B_i$ infinitely often. Thus, $\norm{\bbe(T^{n_k}f \overline{g_i} |X)}$ does not converge to 0 as $k \to \infty$.

If $\bbe(f|X) \neq 0,$ we write $f = f_0 + h$ where $\bbe(f_0|X) = 0$ and $h := \bbe(f|X)$. Then 

\begin{align*}
&\bbe(T^nf \overline{g} |X) - (T')^n \bbe(f|X)\bbe(\overline{g}|X) \\
= \ &\bbe(T^n(f_0 + h) \overline{g} |X) - (T')^n \bbe(f_0 + h|X)\bbe(\overline{g}|X).
\end{align*}

By linearity of the conditional expectation, this is the same as

$$\bbe(T^nf_0 \overline{g} |X) - (T')^n \bbe(f_0|X) \bbe(\overline{g}|X) +  \bbe(T^n h \overline{g}|X) - (T')^n \bbe(h|X) \bbe(\overline{g}|X).$$

The second term is 0 as $\bbe(f_0|X) \equiv 0.$ Further, because $h \in L^{\infty}(X), \bbe(T^n h \overline{g}|X) = (T')^n h \bbe(\overline{g}|X) = (T')^n \bbe(h|X) \bbe(\overline{g}|X),$ which cancels with the fourth term. We are left with $\bbe(T^nf_0 \overline{g} |X)$ and have reduced this to the previous case.
\end{proof}

Finally, we arrive at our goal.

\begin{thm}[Weakly Mixing Extensions are Residual] \label{Thm:CategoryTheorem}
$\mathcal{W}_X$ is a dense, $G_{\delta}$ subset of $\mathcal{G}_X$. 
\end{thm}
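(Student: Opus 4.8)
The proof splits into two independent parts: density of $\mathcal{W}_X$ in $\mathcal{G}_X$, which is essentially the content of the Conjugacy Lemma for Extensions, and the assertion that $\mathcal{W}_X$ is a $G_\delta$, for which I follow the scheme of Halmos' category theorem, replacing its spectral input by the subsequence characterization of Lemma \ref{Lem:RelWMSubsequence}. Since $\mathcal{G}_X$ is a Baire space (Proposition \ref{Prop:GXIsClosed}), a dense $G_\delta$ subset is residual.

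\emph{Density.} First I would record that $\mathcal{W}_X$ is invariant under conjugation by elements of $\mathcal{G}_X$: for $T\in\mathcal{W}_X$, $S\in\mathcal{G}_X$ and $f,g\in L^2(Z|X)$ one computes
\begin{align*}
&\bbe\big((S^{-1}TS)^nf\,\overline{g}\,\big|X\big)-\big((S')^{-1}T'S'\big)^n\bbe(f|X)\bbe(\overline{g}|X)\\
&\qquad=(S')^{-1}\Big[\bbe\big(T^n(Sf)\overline{Sg}\,\big|X\big)-(T')^n\bbe(Sf|X)\bbe(\overline{Sg}|X)\Big],
\end{align*}
where $Sf$ denotes the Koopman image; as $(S')^{-1}$ is an $L^2(X)$-isometry and $Sf,Sg\in L^2(Z|X)$, the Cesàro means of the $L^2(X)$-norms vanish, so $S^{-1}TS\in\mathcal{W}_X$. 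Next I would exhibit a single $T_0\in\mathcal{W}_X$ with $T_0'$ antiperiodic, e.g.\ $T_0:=A\times B$ on $X\times X$ with $A$ weakly mixing (hence antiperiodic) and $B$ weakly mixing (e.g.\ Chacon's map \cite{Chacon}); a direct computation shows that $A\times B$ is a weakly mixing extension of $T_0'=A$ whenever $B$ is weakly mixing. Now given any $T\in\mathcal{G}_X$ and a dyadic neighborhood $N_\epsilon(T)$, Lemma \ref{Lem:ConjugacyLemmaExtensions} applied to $T$ and $T_0$ yields $S\in\mathcal{G}_X$ with $S^{-1}T_0S\in N_\epsilon(T)$, and $S^{-1}T_0S\in\mathcal{W}_X$ by the previous observation. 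As dyadic neighborhoods form a basis for the weak topology on $\mathcal{G}_X$, density follows.

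\emph{The $G_\delta$ property.} Fix a countable $D\subset L^2(Z|X)$ with $\norm{\norm{\overline{f}}_{L^2(Z|X)}}_{L^\infty(X)}\le1$ for all $f\in D$ that is $L^2(Z)$-dense in the unit ball of $L^2(Z|X)$ (possible, $L^2(Z)$ being separable). For a finite tuple $\mathcal{F}=(f_1,\dots,f_r)$ of elements of $D$ and $m\in\mathbb{N}$ put
\[
V(\mathcal{F},m):=\Big\{T\in\mathcal{G}_X:\ \exists\,n\ge1\ \ \forall\,i,j,\quad \norm{\bbe(T^nf_i\overline{f_j}|X)-(T')^n\bbe(f_i|X)\bbe(\overline{f_j}|X)}_{L^2(X)}<\tfrac1m\Big\}.
\]
There are countably many such sets, and I claim $\mathcal{W}_X=\bigcap_{\mathcal{F},m}V(\mathcal{F},m)$ with each $V(\mathcal{F},m)$ open. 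For openness: the weak topology on $\mathcal{G}_X$ is the strong operator topology, so $T\mapsto T^nf_i$ is $L^2(Z)$-continuous, and Proposition \ref{Prop:C-SNorms} then makes $T\mapsto\bbe(T^nf_i\overline{f_j}|X)$ continuous into $L^2(X)$; identifying $L^2(X)$ with the subspace $\{\phi\circ\pi:\phi\in L^2(X)\}\subseteq L^2(Z)$ does the same for $T\mapsto(T')^n\bbe(f_i|X)\bbe(\overline{f_j}|X)$, so $V(\mathcal{F},m)$ is a countable union of finite intersections of open sets. The inclusion $\mathcal{W}_X\subseteq\bigcap V(\mathcal{F},m)$ is immediate: the subsequence $(n_k)$ of Lemma \ref{Lem:RelWMSubsequence} eventually witnesses the required strict inequality for any fixed $\mathcal{F}$ and $m$. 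For $\supseteq$, suppose $T\notin\mathcal{W}_X$; the proof of Lemma \ref{Lem:RelWMSubsequence} produces a generalized eigenfunction $f$ with $\bbe(f|X)\equiv0$, a relatively orthonormal basis $g_1,\dots,g_l$ of its module, and $c>0$ so that for every $n$ some index $j(n)$ has $\norm{\bbe(T^nf\overline{g_{j(n)}}|X)}_{L^2(X)}\ge c$. After rescaling $f$ into the unit ball and choosing $\tilde f,\tilde g_1,\dots,\tilde g_l\in D$ within $\delta$ of $f,g_1,\dots,g_l$ in $L^2(Z)$, the estimates in the proof of Lemma \ref{Lem:RWML2Dense}---which are uniform in $n$, and which also give $\norm{\bbe(\tilde f|X)\bbe(\overline{\tilde g_i}|X)}_{L^2(X)}\le\delta$ because $\bbe(f|X)\equiv0$---yield $c_0>0$ such that for every $n$ the pair $(\tilde f,\tilde g_{j(n)})$ has relative correlation at least $c_0$. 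Taking $\mathcal{F}=(\tilde f,\tilde g_1,\dots,\tilde g_l)$ and $m$ with $1/m<c_0$, no $n$ makes all pairs of $\mathcal{F}$ simultaneously $<1/m$, so $T\notin V(\mathcal{F},m)$. Hence $\mathcal{W}_X=\bigcap V(\mathcal{F},m)$ is $G_\delta$.

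\emph{Main obstacle.} The delicate point is $\bigcap V(\mathcal{F},m)\subseteq\mathcal{W}_X$: a non--weakly-mixing extension need not exhibit a single pair of functions whose relative correlation stays bounded below along all times---the generalized eigenfunction only forces one of the basis correlations to be large, with the relevant index $j(n)$ varying with $n$. The fix is to bundle the entire relatively orthonormal basis into one finite tuple, so that at every time $n$ at least one pair has large relative correlation, and then to transfer this lower bound onto the countable dense set $D$ using the $n$-uniform norm estimates built from Proposition \ref{Prop:C-SNorms}. A secondary technicality is verifying that $T\mapsto\bbe(T^nf_i\overline{f_j}|X)$ and $T\mapsto(T')^n\bbe(f_i|X)\bbe(\overline{f_j}|X)$ are continuous in the weak topology, which underlies the openness of the sets $V(\mathcal{F},m)$.
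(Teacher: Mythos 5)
Your proposal is correct. The density half coincides with the paper's argument: conjugation-invariance of $\mathcal{W}_X$ via the intertwining identity $\bbe(Sh|X)=S'\bbe(h|X)$, a weakly mixing extension over an antiperiodic factor (the paper only asserts existence; your explicit witness $A\times B$ with $B$ weakly mixing is a valid instance of the same product construction the paper uses for strong mixing in Section~\ref{Sec:StrongMixing}), and then Lemma~\ref{Lem:ConjugacyLemmaExtensions}. The $G_\delta$ half, however, takes a genuinely different decomposition, and a better one. The paper writes $\mathcal{W}_X=\bigcap_{i,j,k}\bigcup_{n\ge k}A_{i,j,k,n}$ with \emph{pairwise} sets, so membership in the intersection only says that each pair $(f_i,f_j)$ separately has relative correlation with $\liminf$ zero, the witness times depending on the pair; deducing $T\in\mathcal{W}_X$ from this via Lemmas~\ref{Lem:RWML2Dense} and~\ref{Lem:RelWMSubsequence} is delicate, because the converse direction of Lemma~\ref{Lem:RelWMSubsequence} only shows that no \emph{single} subsequence works for all of $f,g_1,\dots,g_l$ simultaneously --- it does not produce one fixed pair whose correlation is bounded below at all large times, since the maximizing index $j(n)$ may oscillate among the basis elements. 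Your sets $V(\mathcal{F},m)$, indexed by finite tuples with a \emph{common} witness time, are tailored exactly to what the generalized-eigenfunction argument delivers: at every time some pair drawn from $(\tilde f,\tilde g_1,\dots,\tilde g_l)$ is substantially correlated, so no common witness exists and $T\notin V(\mathcal{F},m)$. Together with the $n$-uniform transfer to the countable dense set via Proposition~\ref{Prop:C-SNorms} (where $\bbe(f|X)\equiv0$ is what controls the product term) and the same strong-operator-continuity argument for openness, this gives a clean countable intersection of open sets. In short, your route pays with a larger (still countable) index set and buys an inclusion $\bigcap V(\mathcal{F},m)\subseteq\mathcal{W}_X$ that actually follows from the stated lemmas, whereas the paper's pairwise formulation leans on them harder than their statements support; your identification of this as the main obstacle is exactly right.
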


\begin{proof}
We begin by proving that if $T \in \mathcal{W}_X$ and $S \in \mathcal{G}_X,$ then $S^{-1}TS \in \mathcal{W}_X.$ With the note that there are weakly mixing extensions of antiperiodic factors, we then use Lemma \ref{Lem:ConjugacyLemmaExtensions} to conclude $\mathcal{W}_X$ is dense. 

We need to prove that there exists a subsequence of

\begin{align*}
& \norm{\bbe((S^{-1}TS)^nf \overline{g} |X) - ((S')^{-1}T'S')^n \bbe(f|X)\bbe(\overline{g}|X)}_{L^2(X)} \\
= \ &\norm{\bbe(S^{-1}T^nSf \overline{g} |X) - (S')^{-1}(T')^nS' \bbe(f|X)\bbe(\overline{g}|X)}_{L^2(X)}
\end{align*}

which converges to 0. Indeed, the above equals

\begin{align*}
\ &\norm{\bbe(S^{-1}T^nSf (S^{-1} S)\overline{g} |X) - (S')^{-1}(T')^nS' \bbe(f|X)((S')^{-1} S')\bbe(\overline{g}|X)}_{L^2(X)} \\
= \ &\norm{(S')^{-1}\bbe(T^n(Sf) (S\overline{g}) |X) - (S')^{-1}(T')^n\bbe(Sf|X) \bbe(S\overline{g}|X)} \\
\le \ &\norm{(S')^{-1}} \norm{\bbe(T^n(Sf) (S\overline{g}) |X) - (T')^n\bbe(Sf|X) \bbe(S\overline{g}|X)} \\
= \ &\norm{\bbe(T^n(Sf) (S\overline{g}) |X) - (T')^n\bbe(Sf|X) \bbe(S\overline{g}|X)}.
\end{align*}

But $T$ is a weakly mixing extension of $T'$ so there exists a subsequence for which the above converges to 0. 

To prove that $\mathcal{W}_X$ is $G_{\delta},$ let $\{f_i\} \subset L^2(Z|X)$ be dense with respect to $L^2(Z)$ and for $i,j,k,n \in \mathbb{N},$ consider the sets

$$A_{i,j,k,n} := \left\{S \in \mathcal{G}_X : \norm{\bbe(S^nf_i \overline{f_j} |X) - (S')^n \bbe(f_i|X)\bbe(\overline{f_j}|X)}_{L^2(X)} < \frac{1}{k} \right\}.$$

Due to Lemmas \ref{Lem:RWML2Dense}, \ref{Lem:RelWMSubsequence}, we see that $\bigcap_{i,j,k} \bigcup_{n \ge k} A_{i,j,k,n} = \mathcal{W}_X.$

Thus it is sufficient to prove that each $A_{i,j,k,n}$ is open. For this, we show that for fixed $n \in \mathbb{N},f,g \in L^2(Z|X)$ and $\epsilon > 0,$ the set

$$ \{S \in \mathcal{G}_X : \norm{\bbe(S^nf \overline{g} |X) - (S')^n\bbe(f|X)\bbe(\overline{g}|X)} < \epsilon \}$$

is open in the weak topology. To this end, we show that the complement

$$V(n,f,g,\epsilon) := \{S \in \mathcal{G}_X : \norm{\bbe(S^nf \overline{g} |X) - (S')^n\bbe(f|X)\bbe(\overline{g}|X)} \ge \epsilon \}$$

is closed. Let $(S_m) \subset V(n,f,g,\epsilon)$ be a sequence of Koopman operators with $(S_m)$ converging weakly to a Koopman operator $S$. Note that this implies that $S_m \to S$ strongly (as Koopman operators are all isometries). 

First note that in general, if we have functions $g,h,h_1, h_2, \ldots \in L^2(Z|X),$ and $h_m \to h$ in $L^2(Z),$ then $\bbe(h_mg|X) \to \bbe(hg|X)$ in $L^2(X).$ Indeed, by Proposition \ref{Prop:C-SNorms},

\begin{align*}
\norm{\bbe(gh_m|X) - \bbe(gh|X)}_{L^2(X)} &= \norm{\bbe(g(h-h_m)|X)}_{L^2(X)} \\
&\le \norm{\norm{g}_{L^2(Z|X)}}_{L^{\infty}(X)} \norm{h_m-h}_{L^2(Z)}.
\end{align*}

Second, note that $S_m \to S$ strongly implies $(S_m)' \to S'$ strongly. To see this, let $h \in L^2(X)$ and let $\hat{h} \in L^2(Z)$ be defined so that $\hat{h}(x,y):=h(x)$ (that is, $\hat{h}$ is constant on fibers). Then by Fubini, $\norm{(S_m)'h-S'h}_{L^2(X)} = \norm{S_m \hat{h} - S \hat{h}}_{L^2(Z)}.$

Lastly note that if $S_m \to S$ strongly, then $S_m^n \to S^n$ strongly.

With these facts, we see that $\bbe(S_m^nf \overline{g} |X) - (S_m')^n\bbe(f|X)\bbe(\overline{g}|X)$ converges to $\bbe(S^nf \overline{g} |X) - (S')^n\bbe(f|X)\bbe(\overline{g}|X)$ strongly. Thus, $S \in V(n,f,g,\epsilon),$ and so $V(n,f,g,\epsilon)$ is closed.
\end{proof}

\begin{remark}
Our assumption that $(X,m)$ was the Lebesgue measure on the unit interval was only important for the proof of density, where we need a non-atomic probability space to have an antiperiodic factor. Proving $\mathcal{W}_X$ is $G_{\delta}$ never required anything of $X$, and so the proof will hold for $(X,m)$ being replaced by any probability space.
\end{remark}

\section{General Case} \label{Sec:General}
There is still a case left to consider. Namely, the case where the ``vertical'' measure is neither purely non-atomic nor purely discrete. Let $(X,m,T')$ be as before with $T'$ invertible, and let $(Y, \eta)$ be a probability space where $Y = A \dot{\cup} B$, where $B$ is an at most countable set, each of which is an atom of $\eta$, and $\eta \restriction_{A}$ is non-atomic (in particular, $0 < \eta(A) < 1$). Let $(Z,\mu,T)=(X \times Y, m \times \eta,T),$ where $T \in \mathcal{G}_X$ is an extension of $T'.$ Let $C:=X \times A$ and $D:=X \times B$

We first show that $T$ cannot mix points on the discrete and non-discrete parts of $Y$.

\begin{prop} \label{Prop:GeneralNoPointMixing}
Let $C,D \subset Z$ be as defined above. Then up to a set of measure zero, $TC \subset C$ and $TD \subset D.$
\end{prop}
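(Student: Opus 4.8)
The plan is to reduce the statement to a fiberwise fact about measure-preserving transformations of $(Y,\eta)$: that any a.e.\ invertible measure-preserving transformation of $(Y,\eta)$ must send the non-atomic part $A$ to $A$ and the set of atoms $B$ to $B$, up to $\eta$-null sets.

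First I would record the fiber decomposition of $T$. Since $T$ extends $T'$ through the coordinate projection, $\pi T = T'\pi$, so we may write $T(x,y) = (T'x, T_x(y))$ for a measurable map $(x,y)\mapsto T_x(y)\in Y$. Because $\mu = m\times\eta$ and $T$ is measure preserving, testing $T^{-1}(A_0\times Y_0)$ against Fubini gives $\int_{(T')^{-1}A_0}\eta(T_x^{-1}Y_0)\,dm(x) = m(A_0)\,\eta(Y_0)$ for all measurable $A_0\subseteq X$, $Y_0\subseteq Y$; since $A_0$ is arbitrary and $T'$ is invertible and measure preserving, this forces $\eta(T_x^{-1}Y_0) = \eta(Y_0)$ for $m$-a.e.\ $x$. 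Running this over a countable generating algebra for the $\sigma$-algebra of $Y$ together with a monotone-class argument, one obtains that for $m$-a.e.\ $x$ the fiber map $T_x$ is a measure-preserving transformation of $(Y,\eta)$; applying the same to $T^{-1}$ (which also extends $(T')^{-1}$ via $\pi$) and noting that $(T^{-1})_{T'x}$ is a two-sided inverse of $T_x$ shows that, for $m$-a.e.\ $x$, $T_x$ is a.e.\ invertible. (Equivalently: the disintegration $\mu = \int_X(\delta_x\times\eta)\,dm(x)$ over $\pi$ satisfies $T_*(\delta_x\times\eta) = \delta_{T'x}\times\eta$ a.e.)

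Next I would prove the fiberwise lemma. Let $S$ be an a.e.\ invertible measure-preserving transformation of $(Y,\eta)$ and let $b\in B$, so $\eta(\{b\}) = w_b > 0$; since $\{b\}$ is not $\eta$-null, $b$ lies outside the null set where $S$ is undefined, and $\{b\}\subseteq S^{-1}(\{S(b)\})$ gives $\eta(\{S(b)\}) = \eta(S^{-1}\{S(b)\}) \ge w_b > 0$, so $S(b)$ is again an atom; hence $S(B)\subseteq B$. The same applied to $S^{-1}$ shows $S$ restricts to a bijection of the countable set $B$, so $S(B) = B$ and therefore $S(A) = A$, modulo $\eta$-null sets. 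Applying this with $S = T_x$: for $m$-a.e.\ $x$, $T_x(A) = A$ and $T_x(B) = B$ up to $\eta$-null sets.

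Finally I would globalize. Using the measure-preserving substitution $z = Tw$,
\[
\mu(TC\cap D) \;=\; \int_Z \mathbf 1_C(w)\,\mathbf 1_D(Tw)\,d\mu(w) \;=\; \int_X\!\!\int_A \mathbf 1_B\big(T_x(a)\big)\,d\eta(a)\,dm(x),
\]
and by the previous paragraph the inner integral vanishes for $m$-a.e.\ $x$, so $\mu(TC\cap D) = 0$, i.e.\ $TC\subseteq C$ up to a null set; the symmetric computation, with $\int_B\mathbf 1_A(T_x(b))\,d\eta(b)=0$ for a.e.\ $x$, gives $\mu(TD\cap C)=0$, i.e.\ $TD\subseteq D$. (Invertibility of $T$ then upgrades both inclusions to equalities, though only the inclusions are claimed.) I expect the only genuinely delicate point to be the first step — verifying that the extension structure really does produce fiber maps $T_x\in\mathcal G(Y,\eta)$ — which relies on $(Y,\eta)$, hence $(Z,\mu)$, being a standard space so that disintegration and its essential uniqueness apply; everything after that is routine.
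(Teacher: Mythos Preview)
Your argument is correct, but it takes a genuinely different route from the paper. The paper does not set up fiber maps $T_x$ at all; instead it argues directly in two asymmetric steps. For $TD\subset D$: if some positive-measure $D'\subset D$ lying on a single atom-level had $TD'\subset C$, then (since $T'$ is invertible) each vertical fiber meets $TD'$ in at most one point, so Fubini and non-atomicity of $\eta\!\restriction_A$ give $\mu(TD')=0$, contradicting measure preservation. For $TC\subset C$: if some positive-measure $C'\subset C$ had $TC'\subset D$, then some fiber $\pi^{-1}(x_0)\cap C'$ is uncountable, but its $T$-image lies in the countable set $\{T'x_0\}\times B$, contradicting invertibility of $T$. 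Your approach trades these ad hoc counting/Fubini arguments for a single structural lemma (any a.e.\ invertible measure-preserving self-map of $(Y,\eta)$ permutes the atoms), at the cost of first verifying that $T_x\in\mathcal G(Y,\eta)$ for a.e.\ $x$ via a monotone-class argument. What you gain is a cleaner, symmetric proof that sidesteps the pointwise-injectivity-on-a-null-fiber delicacy implicit in the paper's second step; what the paper gains is brevity and no need for a countably generated $\sigma$-algebra on $Y$.
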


\begin{proof}
	First, suppose there exists $D' \subset D$ such that $\mu(D') > 0$ and $TD' \subset C$. We can assume without loss of generality that there exists $k$, a level of $D,$ such that $D' \subset k$ (see Section \ref{Sec:Discrete} for an explanation of this notation). We claim that $m(TD')=0,$ so that $T$ is not measure preserving. Note that because $T$ is an extension of invertible $T'$ and $D'$ is contained to a single level of $D$, then for each fixed $x \in X, TD' \cap \pi^{-1}(x)$ contains at most one point. Therefore, by Fubini
	
	$$\mu(TD') = \int_Z \chi_{TD'} d\mu = \int_X \left( \int_Y \chi_{TD'}(x,y) d\eta \right) dm.$$
	
	But by our previous note and the fact that $(A, \eta \restriction_A)$ is non-atomic, $\int_Y \chi_{TD'}(x,y) d\eta =0$ for all $x$, and thus $\mu(TD')=0.$
	
	Now suppose there exists $C' \subset C$ such that $\mu(C')>0$ and $TC' \subset D.$ Note that there exists $x_0 \in X$ such that $\pi^{-1}(x_0) \cap C'$ is uncountable (else $\mu(C')=0$ with a similar argument as above). But $T(\pi^{-1}(x_0) \cap C') \subset T'x_0 \times B$ is a countable set. This contradicts the invertibility of $T$.
\end{proof}

A quick consequence of Proposition \ref{Prop:GeneralNoPointMixing} is that there are no weakly mixing extensions on $Z$.

\begin{cor} \label{Cor:NoWeakMixingOnMixedVertical}
Let $(X,m,T'), (Y,\eta), (Z,\mu,T), C,$ and $D$ be as defined above, with $A$ and $B$, continuous and discrete parts of $Y$, respectively, nonempty. Then $T \notin \mathcal{W}_X$.
\end{cor}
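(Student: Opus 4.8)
The plan is to exhibit a single pair of functions $f, g \in L^2(Z|X)$ for which the averages in Definition \ref{Def:WeakMixingExtension} stay bounded away from zero, using Proposition \ref{Prop:GeneralNoPointMixing} to decouple the continuous and discrete parts of $Y$. The natural candidate is $f = g = \chi_D - \mu(D)\cdot\mathbf{1}$, i.e. the indicator of the discrete slab recentered to have total integral zero; more precisely, since we want a function with relative mean zero, set $f(x,y) = \chi_D(x,y) - \eta(B)$, which lies in $L^\infty(Z) \subset L^2(Z|X)$ and satisfies $\bbe(f|X) = \eta(B) - \eta(B) = 0$ $m$-a.e. because $\mu(D \cap \pi^{-1}(x)) = \eta(B)$ is the same for every $x$.

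First I would compute $\bbe(T^n f \overline{f}|X)$. By Proposition \ref{Prop:GeneralNoPointMixing}, $T$ preserves $C$ and $D$ up to measure zero, hence $T^n\chi_D = \chi_D$ as elements of $L^2(Z)$ (the indicator of a $T$-invariant set), so $T^n f = \chi_D - \eta(B)$ a.e. Therefore $T^n f \cdot \overline{f} = (\chi_D - \eta(B))^2 = (1 - 2\eta(B))\chi_D + \eta(B)^2$, and taking conditional expectation over $X$ gives $\bbe(T^n f \overline{f}|X)(x) = (1 - 2\eta(B))\eta(B) + \eta(B)^2 = \eta(B)(1 - \eta(B)) = \eta(B)\eta(A)$ $m$-a.e., which is a strictly positive constant (recall $0 < \eta(A) < 1$). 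Meanwhile $(T')^n\bbe(f|X)\bbe(\overline{f}|X) = 0$. Hence every term of the Cesàro average in Definition \ref{Def:WeakMixingExtension} equals $\norm{\eta(A)\eta(B)}_{L^2(X)} = \eta(A)\eta(B) > 0$, so the limit (if it existed) would be $\eta(A)\eta(B) \neq 0$, and $T \notin \mathcal{W}_X$.

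The one point requiring care is the justification that $T^n\chi_D = \chi_D$ in $L^2$: Proposition \ref{Prop:GeneralNoPointMixing} gives $TD \subset D$ and $TC \subset C$ up to null sets, and since $T$ is measure-preserving with $\mu(C) + \mu(D) = 1$, the inclusion $TD \subset D$ together with $\mu(TD) = \mu(D)$ forces $TD = D$ mod null sets, and iterating gives $T^nD = D$; the same for $C$. I expect this to be the only genuine obstacle, and it is mild — everything else is a direct computation with indicator functions and the fact that $f$ is constant on each fiber's discrete part. One could alternatively phrase the whole argument by noting that $\chi_D \in L^\infty(X)$-module sense is actually a generalized eigenfunction (it is fixed), invoking Lemma \ref{Lem:RelWMSubsequence}, but the explicit computation above is cleaner and self-contained.
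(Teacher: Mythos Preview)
Your proof is correct and is essentially the same as the paper's: both choose a function that is constant on $C$ and constant on $D$ with relative mean zero (your $f=\chi_D-\eta(B)$ is just $-\eta(A)\eta(B)$ times the paper's choice), observe via Proposition \ref{Prop:GeneralNoPointMixing} that this $f$ is $T$-invariant, and conclude that $\bbe(T^nf\overline{f}\,|X)=\bbe(|f|^2\,|X)$ is a positive constant. Your extra remark that $TD\subset D$ together with $\mu(TD)=\mu(D)$ forces $TD=D$ mod null sets is exactly the small point the paper handles by assuming without loss of generality that the inclusions hold everywhere.
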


\begin{proof}
Note that we can assume without loss of generality that $TC \subset C$ and $TD \subset D$ (that is, not up to a set of measure 0, but rather everywhere). Define $f(z)$ as

\begin{center}
$ f(z) := \left\{
\begin{array}{cc}
\frac{1}{\eta(A)} & \text{if } z \in C \\
\frac{-1}{\eta(B)} & \text{if } z \in D
\end{array} \right.
$.
\end{center}

Clearly $f \in L^2(Z|X)$ and simple calculation shows that $f$ has relative mean zero. Further, by Proposition \ref{Prop:GeneralNoPointMixing}, $f$ is $T$-invariant, and so 

$$\bbe(T^nf \overline{f}|X)=\bbe(f \overline{f}|X) = \bbe(\abs{f}^2|X) > 0.$$

Therefore, $T$ is not a weakly mixing extension of $T'$.
\end{proof}

\begin{remark}
As with the $G_{\delta}$ part of Theorem \ref{Thm:CategoryTheorem}, Proposition \ref{Prop:GeneralNoPointMixing} and Corollary \ref{Cor:NoWeakMixingOnMixedVertical} hold when $(X,m)$ is replaced with any standard probability space.
\end{remark}

\section{Strongly Mixing Extensions} \label{Sec:StrongMixing}
In this section we first extend the notion of strongly mixing transformations to extensions, just as the notions of ergodic and weakly mixing transformations were extended to extensions. Afterwards we will show that the set of strongly mixing extensions form a set of first category in $\mathcal{G}_X$.

\begin{defin} \label{Def:StronglyMixing}
Let $(X,\nu), (Z,\mu)$ be  probability spaces. We say that $T \in \mathcal{G}_X$ is a \textit{(strongly) mixing extension} of $T'$ or $T$ is \textit{(strongly) mixing relative to} $T'$ if for all $f,g \in L^2(Z|X),$

$$\lim_{n \to \infty} \norm{\bbe(T^nf \overline{g} |X) - (T')^n \bbe(f|X)\bbe(\overline{g}|X)}_{L^2(X)} = 0.$$

Let $\mathcal{S}_X \subset \mathcal{G}_X$ denote the set of strongly mixing extensions.
\end{defin}

Definition \ref{Def:StronglyMixing} yields some of the properties one would hope to have from the extension of the notion of strongly mixing transformations. For example, $\mathcal{S}_X$ is in general not empty. Indeed, any direct product transformation where the second component is strongly mixing will be a strongly mixing extension. We also have that if $X$ is a single point, then the definition coincides with classical strongly mixing transformations. Further, it is clear that $\mathcal{S}_X \subset \mathcal{W}_X$.

We once again return to the case where $(X,m)$ is the unit interval with the Lebesgue measure, and $(Z,m)$ is the unit square with the Lebesgue measure. Analogous to Rokhlin's result and its proof, we now show that $\mathcal{S}_X$ is a first category subset of $\mathcal{G}_X$.

\begin{thm}[Strongly Mixing Extensions are of First Category] \label{Thm:FirstCategoryTheorem}
$\mathcal{S}_X \subset \mathcal{G}_X$ is of first category.
\end{thm}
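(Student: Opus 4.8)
The plan is to mimic Rokhlin's classical argument showing strongly mixing transformations are of first category, adapting it to the relative setting and using the density result for column-preserving permutations (Theorem \ref{Thm:DensityOfPermutations}) that we have already established. The key point is that strong mixing is destroyed by periodicity: if $Q \in \mathcal{G}_X$ is a dyadic permutation that is periodic with some period $p$ everywhere, then $Q^{p} = I$, so $\bbe(Q^{np}f\overline{g}|X) = \bbe(f\overline{g}|X)$ for all $n$, which for a suitable choice of $f,g$ (e.g. $f = g = \chi_E - \mu(E)$ with $E$ a cylinder set, made to have relative mean zero) is bounded away from $(Q')^{np}\bbe(f|X)\bbe(\overline{g}|X)$ along the subsequence $np$. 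Hence no neighborhood of such a $Q$ can be contained in $\mathcal{S}_X$; more precisely, $\mathcal{S}_X$ is contained in a countable union of closed sets, each with empty interior.

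Concretely, first I would fix a countable dense (in $L^2(Z)$) family $\{f_i\} \subset L^2(Z|X)$ of functions of relative mean zero, and for integers $i,k,n$ define the sets
\[
B_{i,k,n} := \left\{ S \in \mathcal{G}_X : \norm{\bbe(S^n f_i \overline{f_i}|X) - (S')^n \bbe(f_i|X)\bbe(\overline{f_i}|X)}_{L^2(X)} \ge \tfrac{1}{k} \right\}.
\]
By the same weak-continuity argument used at the end of the proof of Theorem \ref{Thm:CategoryTheorem} (strong convergence of Koopman operators, together with Proposition \ref{Prop:C-SNorms} and the fact that $S_m \to S$ strongly implies $(S_m)' \to S'$ strongly and $S_m^n \to S^n$ strongly), each $B_{i,k,n}$ is closed in the weak topology on $\mathcal{G}_X$. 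Next I would observe that if $S \in \mathcal{S}_X$ then for each $i$ the quantity in the definition tends to $0$ as $n \to \infty$, so $S$ lies in only finitely many $B_{i,k,n}$ for each fixed $i,k$; consequently $S \notin \bigcap_{N}\bigcup_{n \ge N} B_{i,k,n}$ for some $i,k$. Thus $\mathcal{S}_X \subset \bigcup_{i,k} \left( \mathcal{G}_X \setminus \bigcap_N \bigcup_{n \ge N} B_{i,k,n} \right) = \bigcup_{i,k} \bigcup_N \bigcap_{n \ge N} \left(\mathcal{G}_X \setminus B_{i,k,n}\right)$, a countable union of closed sets (each $\bigcap_{n\ge N}(\mathcal{G}_X \setminus B_{i,k,n})$ being an intersection of closed sets, hence closed). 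It remains to show each such closed set has empty interior.

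For that, let $C$ be one of these closed sets and suppose it contained a nonempty weakly open set $U$; I would derive a contradiction by producing an element of $U$ that is not in $\mathcal{S}_X$, indeed not even in $C$. Shrinking, $U$ contains a dyadic neighborhood $N_\epsilon(T)$ of some $T \in \mathcal{G}_X$. By Theorem \ref{Thm:DensityOfPermutations} (or the stronger Theorem \ref{Thm:WATE}), $N_\epsilon(T)$ contains a column-preserving dyadic permutation $Q$, which by Remark \ref{Remark:PermutationsDiscreteEquivalent} is $2^K$-simply discrete equivalent, hence periodic with period dividing $2^K$ everywhere; say $Q^p = I$. Now choose a cylinder set $E = \pi^{-1}(E')$ with $E'$ dyadic, and let $f = g = \chi_E - m(E')$; then $f \in L^2(Z|X)$ with $\bbe(f|X) = 0$, so $\bbe(Q^{np}f\overline{f}|X) - (Q')^{np}\bbe(f|X)\bbe(\overline{f}|X) = \bbe(f\overline{f}|X) = \bbe(\abs{f}^2|X)$, whose $L^2(X)$ norm is a fixed positive constant independent of $n$. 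Hence $Q$ fails the strong-mixing limit for the pair $(f,f)$. The one technical wrinkle is that the $f_i$ in the definition of $B_{i,k,n}$ are a fixed countable family, not this particular $f$; I circumvent this by noting that $Q$ lies in $B_{i_0,k_0,n}$ for \emph{all large} $n$ in the arithmetic progression $p\mathbb{Z}$, once $f_{i_0}$ is an $L^2(Z)$-approximant of $f$ good enough that the norm stays above $1/k_0$ (here I use Proposition \ref{Prop:C-SNorms} to control the perturbation, exactly as in Lemma \ref{Lem:RWML2Dense}), and hence $Q \notin \bigcap_{n \ge N}(\mathcal{G}_X \setminus B_{i_0,k_0,n})$ for any $N$. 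But $Q \in N_\epsilon(T) \subset U \subset C$, and $C$ was one of the sets $\bigcap_{n\ge N}(\mathcal{G}_X \setminus B_{i,k,n})$; if $C$ corresponds to a different triple $(i,k)$ than $(i_0,k_0)$ this argument must instead be run directly with the $f_i$ attached to $C$, which still works because the obstruction — $Q^p = I$ forcing $\bbe(Q^{np}f_i\overline{f_i}|X) = \bbe(f_i\overline{f_i}|X)$ along $n \in p\mathbb{Z}$ — is available for \emph{every} $f_i$ of relative mean zero with $\bbe(\abs{f_i}^2|X) \not\equiv 0$, and the finitely many degenerate $f_i$ can be excluded from the family at the outset. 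This yields the desired contradiction, so every such closed set is nowhere dense and $\mathcal{S}_X$ is of first category.

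The main obstacle I anticipate is precisely this bookkeeping: arranging the countable family $\{f_i\}$, the index sets $B_{i,k,n}$, and the periodicity obstruction so that a single periodic permutation $Q$ witnesses failure of membership in the relevant $F_\sigma$ piece regardless of which piece we are trying to puncture. The analytic content (closedness of $B_{i,k,n}$, the norm estimate $\norm{\bbe(\abs{f_i}^2|X)}_{L^2(X)} > 0$, and the $L^2(Z)$-perturbation bound) is routine given Proposition \ref{Prop:C-SNorms} and the weak-topology continuity facts already proved for Theorem \ref{Thm:CategoryTheorem}; the combinatorial packaging is where care is needed.
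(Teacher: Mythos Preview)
Your overall strategy---cover $\mathcal{S}_X$ by a countable union of closed sets and puncture each one with a periodic column-preserving permutation---is the right Rokhlin-style idea and matches the paper's approach. But there is a genuine error in your test function.

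You take $E = \pi^{-1}(E')$ a \emph{cylinder} set and set $f = \chi_E - m(E')$, claiming $\bbe(f|X)=0$. This is false: since $E$ is a cylinder, $\bbe(\chi_E|X) = \chi_{E'}$, not the constant $m(E')$, so $\bbe(f|X) = \chi_{E'} - m(E') \not\equiv 0$. Worse, such an $f$ is constant on fibers, i.e.\ $f \in L^\infty(X)$, and for any such function one has $\bbe(T^n f\overline{f}\,|X) - (T')^n\bbe(f|X)\bbe(\overline{f}|X) \equiv 0$ for every $T \in \mathcal{G}_X$ and every $n$. Your periodicity obstruction therefore evaporates: this $f$ can never witness failure of relative strong mixing. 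The repair is to pick a set that genuinely lives in the fiber direction; the paper takes $A = [0,1]\times[0,\tfrac12]$ and $f=\chi_A$, for which $\bbe(\chi_A|X)\equiv\tfrac12$ and, whenever $T^k=I$, the expression equals $\norm{\bbe(\chi_A|X)-\bbe(\chi_A|X)^2}_{L^2(X)} = \tfrac14$.

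Your countable-family packaging also creates exactly the difficulty you flag: for pairs $(i,k)$ with $1/k > \norm{\bbe(|f_i|^2|X)}_{L^2(X)}$, the closed set $\bigcap_{n\ge N}(\mathcal{G}_X\setminus B_{i,k,n})$ need not be nowhere dense, so your $F_\sigma$ cover is not obviously first category. The paper avoids all of this by using a \emph{single} function $\chi_A$ and a \emph{single} threshold $\tfrac15 < \tfrac14$: with $M_k := \{T: \norm{\bbe(T^k\chi_A\chi_A|X)-(T')^k\bbe(\chi_A|X)^2}\le \tfrac15\}$ closed, one has $\mathcal{S}_X \subset \bigcup_N \bigcap_{k>N} M_k$, and each $\bigcap_{k>N}M_k$ is nowhere dense because its complement contains the dense set $\bigcup_{k>N}\{T:T^k=I\}$ supplied by Theorem~\ref{Thm:WATE}. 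One function, one threshold, no density-in-$L^2$ approximation needed. (A minor slip, harmless for the argument: a rank-$K$ dyadic permutation is periodic, but its period need not divide $2^K$.)
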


\begin{proof}
For $k \in \mathbb{N},$ let $P_k := \{T \in \mathcal{G}_X | T^k = I_Z \}.$ Note in particular that if $T \in P_k$ then $(T')^k = I_X.$ For $n \in \mathbb{N},$ let $\hat{P}_n := \bigcup_{k > n} P_k.$ Note that the Weak Approximation Theorem for Extensions (Theorem \ref{Thm:WATE}) implies that $\hat{P}_n$ is dense in $\mathcal{G}_X$.

Let $A:= [0,1] \times [0, 1/2]$ (the bottom half of $Z$). Note that $\bbe(\chi_A |X) = 1/2$ for all $x \in X$. We now define new sets,

$$M_k := \left\{T \in \mathcal{G}_X | \norm{\bbe(T^k \chi_A \chi_A|X) - (T')^k \bbe(\chi_A|X) \bbe(\chi_A|X)}_{L^2(X)} \le \frac{1}{5}  \right\}.$$

Using the same arguments as used in the proof of Theorem \ref{Thm:CategoryTheorem} for the sets $V(n,f,g,\epsilon),$ we see that $M_k$ is closed for all $k$. Now let 

$$M := \bigcup_{n=1}^{\infty} \bigcap_{k > n} M_k.$$

It is easy to see that $\mathcal{S}_X \subset M$. Thus it is sufficient to show that $M$ is of first category. It is in turn sufficient to show that $\bigcap_{k > n} M_k$ is nowhere dense for all $n$, and further given that $\bigcap_{k > n} M_k$ is closed for all $n$, it is thus sufficient to show that $\mathcal{G}_X \backslash \bigcap_{k > n} M_k$ is dense. Lastly, as $\mathcal{G}_X \backslash \bigcap_{k > n} M_k = \bigcup_{k > n} (\mathcal{G}_X \backslash M_k),$ it will suffice to show that $P_k \subset (\mathcal{G}_X \backslash M_k)$ for all $k$ as then $\hat{P}_n = \bigcup_{k > n} P_k \subset \bigcup_{k > n} (\mathcal{G}_X \backslash M_k)$ and $\hat{P}_n$ is dense.

Now, if $T \in P_k,$ then $T^k = I_Z, (T')^k = I_X,$ so

\begin{align*}
&\quad\ \norm{\bbe(T^k \chi_A \chi_A|X) - (T')^k \bbe(\chi_A|X) \bbe(\chi_A|X)}_{L^2(X)} \\
&= \norm{\bbe(\chi_A|X) - \bbe(\chi_A|X)^2} = \norm{\frac{1}{2}-\frac{1}{4}} = \frac{1}{4} > \frac{1}{5}.
\end{align*}

Thus, $T \notin M_k$.
\end{proof}

\begin{cor} \label{Cor:StrongAndWeakMixingExtensionsNotEqual}
$\mathcal{S}_X$ is a proper subset of $\mathcal{W}_X.$
\end{cor}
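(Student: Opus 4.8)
The plan is to deduce the strict inclusion purely from the two category theorems already established, using the Baire structure of $\mathcal{G}_X$. Since the inclusion $\mathcal{S}_X \subseteq \mathcal{W}_X$ was already observed in Section \ref{Sec:StrongMixing}, it remains only to produce a weakly mixing extension that is not strongly mixing, i.e.\ to show that $\mathcal{W}_X \setminus \mathcal{S}_X \neq \emptyset$.

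First I would recall that by Proposition \ref{Prop:GXIsClosed}, $\mathcal{G}_X$ is a closed subspace of the Baire space $\mathcal{G}(Z)$, hence is itself a nonempty Baire space. By Theorem \ref{Thm:CategoryTheorem}, $\mathcal{W}_X$ is a dense $G_{\delta}$ subset of $\mathcal{G}_X$, so its complement $\mathcal{G}_X \setminus \mathcal{W}_X$ is of first category in $\mathcal{G}_X$. By Theorem \ref{Thm:FirstCategoryTheorem}, $\mathcal{S}_X$ is also of first category in $\mathcal{G}_X$. Consequently the union $(\mathcal{G}_X \setminus \mathcal{W}_X) \cup \mathcal{S}_X$ is of first category, and its complement --- which is precisely $\mathcal{W}_X \setminus \mathcal{S}_X$ --- is residual in $\mathcal{G}_X$.

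Since a nonempty Baire space is not of first category in itself, the residual set $\mathcal{W}_X \setminus \mathcal{S}_X$ cannot be empty. Any $T$ lying in it is a weakly mixing extension that fails to be strongly mixing, so together with $\mathcal{S}_X \subseteq \mathcal{W}_X$ this gives $\mathcal{S}_X \subsetneq \mathcal{W}_X$, as claimed.

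I do not anticipate any genuine obstacle here: the only points needing a word of care are that $\mathcal{G}_X$ is genuinely nonempty (witnessed, for instance, by the identity on $Z$, or by any direct product extension whose fibre transformation is strongly mixing, which also shows $\mathcal{S}_X \neq \emptyset$) and that $\mathcal{G}_X$ is a Baire space, both of which are already available from Proposition \ref{Prop:GXIsClosed} and the Baire category theorem. Alternatively one could avoid mentioning residuality altogether and simply observe that in a nonempty Baire space a first-category set cannot contain the complement of a first-category set.
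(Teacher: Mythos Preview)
Your argument is correct and is precisely the Baire category reasoning the paper has in mind: the corollary is stated without proof because it follows immediately from combining Theorem~\ref{Thm:CategoryTheorem} (residuality of $\mathcal{W}_X$), Theorem~\ref{Thm:FirstCategoryTheorem} (first category of $\mathcal{S}_X$), and the fact that $\mathcal{G}_X$ is a Baire space (Proposition~\ref{Prop:GXIsClosed}). Your write-up simply makes this explicit, and there is nothing to add or correct.
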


\section{Further Questions} \label{Sec:Questions}
To conclude this paper, we formulate some open questions.

\begin{question} \label{Que:AtomicFactor}
	Let $(X,\nu)$ be any probability space, $(Y,\eta)$ be a non-atomic probability space and $(Z,\mu) = (X \times Y, \nu \times \eta).$ Is $\mathcal{W}_X$ a dense, $G_{\delta}$ subset of $\mathcal{G}_X$?
\end{question}

It would be sufficient for Question \ref{Que:AtomicFactor} to consider the case where $(X,\nu)$ is purely atomic. We cannot use the Conjugacy Lemma in this case because there are no antiperiodic transformations on a discrete set.

\begin{question} \label{Que:FixedFactor}
Let $(X,m)$ be a (potentially non-atomic) probability space,  $(Y,\eta)$ be a non-atomic probability space and $(Z,\mu) = (X \times Y, \nu \times \eta).$ Let $R \in \mathcal{G}(X)$ be fixed. Define 
\vspace{-5pt}
$$\mathcal{G}_R := \{T \in \mathcal{G}_X : T \text{ extends } R \}$$ 

and 
\vspace{-5pt}
$$\mathcal{W}_R := \{T \in \mathcal{G}_R : T \text{ is a weakly mixing extension of } R \}.$$ Is $\mathcal{W}_R$ a dense, $G_{\delta}$ subset of $\mathcal{G}_R$? 

Similarly let 
\vspace{-5pt}
$$\mathcal{S}_R := \{T \in \mathcal{G}_R : T \text{ is a strongly mixing extension of } R \}.$$

Is $\mathcal{S}_R$ a first category subset of $\mathcal{G}_R?$
\end{question}

The main difficulty as of now in answering Question \ref{Que:FixedFactor} for weakly mixing extensions is proving the density. The freedom of having a non-fixed factor allowed a generalization of Halmos' Conjugacy Lemma. One cannot conjugate in $\mathcal{G}_R$ unless $R$ is the identity on $X$, as $\mathcal{G}_R$ is not closed under inverses for any other invertible $R$. That is not even to mention lack of dyadic permutations or other tools which we had at our disposal throughout this paper. Even strongly mixing extensions at first present some trouble, as the proof relied on period $T$, and if $R$ is not periodic, then neither is $T$.

We note that in the case of compact group extensions of a fixed weakly mixing $R$, Robinson gave a positive answer to the question of genericity of weakly mixing extensions \cite{EARobertson}.

Further, we see from the following proposition that these Question \ref{Que:FixedFactor} cannot be derived from Theorems \ref{Thm:CategoryTheorem} and \ref{Thm:FirstCategoryTheorem}. Recall that $(X,m)$ is the unit interval with the Lebesgue measure.

\begin{prop}
	For all $T' \in \mathcal{G}(X), \mathcal{G}_{T'}$ is a closed, nowhere dense subset of $\mathcal{G}_X.$
\end{prop}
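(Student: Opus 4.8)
The plan is to mimic the structure of Proposition~\ref{Prop:GXIsClosed}, replacing the role of ``cylinder set'' (preserved by every $T \in \mathcal{G}_X$) with the collection of sets whose $\pi$-image is \emph{invariant} under the fixed $T'$. First I would show $\mathcal{G}_{T'}$ is closed in $\mathcal{G}_X$. Fix $T \in \mathcal{G}_X \backslash \mathcal{G}_{T'}$; then the factor $T''$ of $T$ differs from $T'$ on a set of positive measure, so there is a dyadic set $D \subset X$ with $m(T'D \triangle T''D) = \delta > 0$. Put $E := \pi^{-1}D$, a cylinder set. For any $S \in \mathcal{G}_{T'}$, $SE = \pi^{-1}(T'D)$ while $TE = \pi^{-1}(T''D)$, so $\mu(SE \triangle TE) = m(T'D \triangle T''D) = \delta$; hence $N_{\delta}(T;E)$ is a weak neighborhood of $T$ disjoint from $\mathcal{G}_{T'}$. (One should check that the factor of a weak limit of elements of $\mathcal{G}_X$ is the limit of their factors, so that ``being an extension of $T'$'' is genuinely captured by testing on cylinder sets — this follows because $S \mapsto S'$ is continuous, as used in the proof of Theorem~\ref{Thm:CategoryTheorem}.)

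For nowhere density, since $\mathcal{G}_{T'}$ is closed it suffices to prove $\mathcal{G}_X \backslash \mathcal{G}_{T'}$ is dense in $\mathcal{G}_X$. Fix $T \in \mathcal{G}_{T'}$ and a basic weak neighborhood $N_{\epsilon}(T) = \{S : \mu(SE_i \triangle TE_i) < \epsilon,\ i = 1,\dots,n\}$. The idea is to perturb $T$ on a small cylinder set in a way that changes the factor. Using the non-atomicity of $(X,m)$, pick a dyadic set $D \subset X$ with $0 < m(D) < \epsilon$ and a dyadic set $D^* \subset X$ with $m(D \cap D^*) = m(D \cap (D^*)^c) > 0$; set $D_1 := D \cap D^*$ and $D_2 := D \cap (D^*)^c$, so $m(D_1) = m(D_2)$, and let $A_j := \pi^{-1}D_j$. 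Define $S$ by $Sz := Tz$ for $z \notin A_1 \cup A_2$, $SA_1 := TA_2$, $SA_2 := TA_1$ (choosing any measure-preserving bijections between the relevant cylinder sets — possible since $m(D_1) = m(D_2)$). Then $\{z : Sz \neq Tz\} \subset A_1 \cup A_2$ has measure $< \epsilon$, so $d'(S,T) < \epsilon$ and hence $S \in N_{\epsilon}(T)$ by $d \le d'$. One must verify $S \in \mathcal{G}_X$: since $S$ agrees with $T$ off a cylinder set and swaps two cylinder sets, $S$ maps cylinder sets to cylinder sets, so $S$ has a factor $S'$. But $S' \neq T'$: on $D_1$, $S'$ acts as $T'$ does on $D_2$, and because $T'$ is invertible and $D_1 \cap D_2 = \emptyset$ with $T'$ measure-preserving, $T'D_1 \neq T'D_2$ up to measure zero, so $S'D_1 = T'D_2 \neq T'D_1$; thus $S' \neq T'$ on a positive-measure set and $S \notin \mathcal{G}_{T'}$.

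A small caveat: the swap must be arranged so $S$ is genuinely an extension, i.e. $S$ commutes with $\pi$ and the induced map $S'$ is a well-defined measure-preserving bijection of $X$. Since $S$ sends $\pi^{-1}D_1$ onto $\pi^{-1}(T'D_2)$ fiberwise and $\pi^{-1}D_2$ onto $\pi^{-1}(T'D_1)$ fiberwise, $S'$ is just $T'$ composed with the transposition of the blocks $D_1, D_2$ in its domain, which is invertible and measure-preserving because $m(D_1) = m(D_2)$ and $m(T'D_1) = m(T'D_2)$. The main obstacle — really the only non-routine point — is ensuring that the perturbation actually changes the factor rather than accidentally yielding an $S$ still extending $T'$; this is why one chooses $D_1, D_2$ \emph{disjoint} (so that $T'D_1 \neq T'D_2$ as $T'$ is invertible) rather than merely of equal measure. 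The rest is a direct transcription of the argument in Proposition~\ref{Prop:GXIsClosed}.
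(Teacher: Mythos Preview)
Your proposal is correct and takes essentially the same approach as the paper: closedness is shown via a cylinder set $\pi^{-1}E$ over a base set where the two factors disagree, and nowhere density by perturbing $T$ through a small transposition on the base (the paper writes this compactly as $\tilde T := T(R \times I)$ with $R$ swapping two small disjoint pieces $E_1,E_2 \subset X$, which is exactly your $S$ once the ``fiberwise'' bijections are fixed). Your parenthetical about continuity of $S \mapsto S'$ is unnecessary, since the direct neighborhood argument you give already establishes closedness.
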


\begin{proof}
	Fix $T' \in \mathcal{G}(X).$ We first show that $\mathcal{G}_{T'}$ is closed. Let $S \in \mathcal{G}_X \backslash \mathcal{G}_{T'}.$ As $S' \neq T',$ there exists $E \subset X$ such that $m(S'E \triangle T'E) > 0.$ Let $\epsilon := m(S'E \triangle T'E) /2.$
	
	Consider 
	
	$$N_{\epsilon}(S) := \{R \in \mathcal{G}_X : \mu(R \pi^{-1}E \triangle S \pi^{-1}E) < \epsilon \}.$$
	
	For any $T \in \mathcal{G}_{T'}, \mu(T \pi^{-1}E \triangle S \pi^{-1}E) = m(T'E \triangle S'E) = 2 \epsilon > \epsilon,$ so $T \notin N_{\epsilon}(S),$ and thus $\mathcal{G}_{T'}$ is closed.
	
	Now, to show $\mathcal{G}_{T'}$ is nowhere dense, we show  $\mathcal{G}_X \backslash \mathcal{G}_{T'}$ is dense. Fix $T \in \mathcal{G}_{T'}, \epsilon > 0$ and let
	
	$$N_{\epsilon}(T) := \{S \in \mathcal{G}_X : \mu(TD_i \triangle SD_i) < \epsilon, i = 1, \ldots 2^{2N} \}$$
	
	where $D_i$ are all dyadic squares of some rank $N$. Fix $i$ and let $E \subset \pi D_i$ with $m(E) < \epsilon$. Let $E_1, E_2$ be disjoint sets whose union is $E$ and $m(E_1)=m(E_2).$
	
	Select $R \in \mathcal{G}(X)$ with the following properties: for $x \in X \backslash E, Rx = x,RE_1=E_2, RE_2 = E_1.$ Define $\tilde{T} := T (R \times I)$. Note that for $j$ such that $\pi D_j \neq \pi D_i, (R \times I)D_j = D_j,$ so for such $j, \mu(\tilde{T}D_j \triangle TD_j) = 0.$ On the other hand, if $\pi D_j = \pi D_i,$ then $\mu(\tilde{T}D_j \triangle TD_j) < m(E) = \epsilon.$ Thus $\tilde{T} \in N_{\epsilon}(T),$ but $\tilde{T} \notin \mathcal{G}_{T'}.$
\end{proof}

\begin{question} \label{Que:Rigidity}
Can one find an extension analogue of Katok's result (see, for example, \cite{Nadkarni}) that rigid transformations form a residual set?
\end{question}

Note that is not completely clear how one should define a rigid extension. The author currently has no notion of how it should be defined.


\nocite{Glasner}
\nocite{HalmosPaper}
\nocite{Peterson}
\nocite{Zhao}

\bibliographystyle{plain}
\bibliography{CategoryTheorem} 

\end{document}